\def\thesection{\arabic{section}}
\def\theequation{\thesection.\arabic{equation}}
\newcommand{\tl}{\tilde}
\newcommand{\ds} {\displaystyle}
\newcommand{\e}{\epsilon}
\newcommand{\vep}{\varepsilon}
\newcommand{\pa} {\partial}
\newcommand{\al} {\alpha}
\newcommand{\ba} {\beta}
\newcommand{\de} {\delta}
\newcommand{\ga} {\gamma}
\newcommand{\Ga} {\Gamma}
\newcommand{\Om} {\Omega}
\newcommand{\sg}{\sigma}
\newcommand{\ov}{\overline}
\newcommand{\De} {\Delta}
\newcommand{\la} {\lambda}
\newcommand{\ka}{\kappa}
\newcommand{\noi} {\noindent}
\newcommand{\na} {\nabla}
\newcommand{\mb} {\mathbb}
\newcommand{\mc} {\mathcal}
\def\theequation{\@arabic{\c@section}.\@arabic{\c@equation}}
\def\QED{\hfill {$\square$}\goodbreak \medskip}
\newtheorem{Theorem}{Theorem}[section]
\newtheorem{Lemma}[Theorem]{Lemma}
\newtheorem{Proposition}[Theorem]{Proposition}
\newtheorem{Corollary}[Theorem]{Corollary}
\newtheorem{Remark}{Remark}
\newtheorem{Definition}{Definition}
\def\Xint#1{\mathchoice
	{\XXint\displaystyle\textstyle{#1}}%
	{\XXint\textstyle\scriptstyle{#1}}%
	{\XXint\scriptstyle\scriptscriptstyle{#1}}%
	{\XXint\scriptscriptstyle\scriptscriptstyle{#1}}%
	\!\int}
\def\XXint#1#2#3{{\setbox0=\hbox{$#1{#2#3}{\int}$ }
		\vcenter{\hbox{$#2#3$ }}\kern-.6\wd0}}
\begin{document}
{\vspace{0.01in}
	\title
	{Interior and boundary regularity results for strongly nonhomogeneous $p,q$-fractional problems}
	 
	\author{  Jacques Giacomoni$^{\,1}$ \footnote{e-mail: {\tt jacques.giacomoni@univ-pau.fr}}, \ Deepak Kumar$^{\,2}$\footnote{e-mail: {\tt deepak.kr0894@gmail.com}},  \
		and \  Konijeti Sreenadh$^{\,2}$\footnote{
			e-mail: {\tt sreenadh@maths.iitd.ac.in}} \\
		\\ $^1\,${\small Universit\'e  de Pau et des Pays de l'Adour, LMAP (UMR E2S-UPPA CNRS 5142) }\\ {\small Bat. IPRA, Avenue de l'Universit\'e F-64013 Pau, France}\\  
		$^2\,${\small Department of Mathematics, Indian Institute of Technology Delhi,}\\
		{\small	Hauz Khaz, New Delhi-110016, India} }

	\date{}
	
	\maketitle

\begin{abstract}
In this article, we deal with the global regularity of weak solutions to a class of problems involving the fractional $(p,q)$-Laplacian, denoted by $(-\Delta)^{s_1}_{p}+(-\Delta)^{s_2}_{q}$, for $s_2, s_1\in (0,1)$ and $1<p,q<\infty$. We establish completely new H\"older continuity results, up to the boundary, for the weak solutions to fractional $(p,q)$-problems involving singular as well as regular nonlinearities. Moreover, as applications to boundary estimates, we establish new Hopf type maximum principle and strong comparison principle in both situations.
%
\medskip

\noi \textbf{Key words:} Fractional $(p,q)$-Laplacian, non-homogeneous nonlocal operator, singular nonlinearity, local and boundary H\"older continuity, maximum principle, strong comparison principle.

\medskip

\noi \textit{2010 Mathematics Subject Classification:} 35J60, 35R11, 35B45, 35D30.

\end{abstract}

\section {Introduction}
 This paper aims to study the H\"older continuity results and maximum principle for weak solutions to the problems involving a class of non-homogeneous nonlocal operators. Precisely,  we consider the following  generic problem:
 \begin{equation*}
 (-\Delta)^{s_1}_{p}u+(-\Delta)^{s_2}_{q}u  = f(x) \quad \text{in } \Om,
  \tag{$\mc P$} \label{probM}
 \end{equation*}
 where $\Om\subset\mb R^N$ is a bounded domain with $C^{1,1}$ boundary, $1<q\leq p<\infty$,  $0<s_2\leq s_1<1$,  and $f\in L^\infty_{\mathrm{loc}}(\Om)$.  The fractional $p$-Laplace operator $(-\Delta)^{s}_{p}$ is defined as
 \begin{equation*}
 	{(-\Delta)^{s}_pu(x)}= 2\lim_{\vep\to 0}\int_{\mathbb R^N\setminus B_{\vep}(x)} \frac{|u(x)-u(y)|^{p-2}(u(x)-u(y))}{|x-y|^{N+ps}}dy.
 \end{equation*} 
 The operator $(-\De)_p^s$ is considered to be the natural extension of the well known fractional Laplacian, $(-\De)^s$ (i.e., $p=2$), to the nonlinear setting. 
 The leading differential operator, $(-\Delta)^{s_1}_{p}+(-\Delta)^{s_2}_{q}$, in problem $(\mc P)$ is known as the fractional $(p,q)$-Laplacian. The operator is non-homogeneous in the sense that for any $t>0$, there does not exist any $\sg\in\mb R$ such that $((-\Delta)^{s_1}_{p}+(-\Delta)^{s_2}_{q})(tu)=t^\sg ((-\Delta)^{s_1}_{p}u+(-\Delta)^{s_2}_{q}u)$ holds for all $u\in W^{s_1,p}(\Om)\cap W^{s_2,q}(\Om)$.\par
 In recent years, there has been an extensive study on the problems involving fractional Sobolev spaces and corresponding nonlocal operators due to their wide applications in the real world problems, such as game theory, 
 finance, obstacle problems, conservation laws, phase transition, image processing, anomalous diffusion and material science. For more details, we refer to \cite{bucur,caffarelli,nezzaH}  and the references therein. \par
 The fractional $(p,q)$-Laplacian is the fractional analogue of the $(p,q)$-Laplacian ($-\De_p -\De_q$), which arises in the study of general reaction-diffusion equations with nonhomogeneous diffusion and transport features. The problems involving these kinds of operators have applications in biophysics, plasma physics and chemical reactions, with double phase features, where the function $u$ corresponds to the concentration term, and the differential operator represents the diffusion coefficient,  for details see \cite{marano} and  references therein.\par 
 In the local case  (obtained by letting $s_1,s_2\to 1^-$ in \eqref{probM}), we have the H\"older continuity results for the gradient of the weak solutions. Precisely, Lieberman in \cite{lieberm}, proved $C^{1,\al}(\ov\Om)$ regularity results for quasi linear problems containing more general operators than the classical $p$-Laplacian and involving non-singular nonlinearities. Subsequently, similar regularity results were obtained in \cite{giacomoni,giacomoni2}, for the $p$-Laplace equations involving singular nonlinearities. We refer to \cite{baroni, de filippis}, for regularity results of minimizers for functional with non-standard growth.  Recently, in \cite{JDS}, the authors have obtained regularity results for weak solutions to the problems driven by the $(p,q)$-Laplacian and involving a singular nonlinearity of the form ${\rm dist}(\cdot,\pa\Om)^{-\al}u^{-\de}$, where 
  $\al\in [0,p)$, $\de>0$ and $u$ is the unknown. Here, they proved that the weak solutions are in $C^{1,\ba}(\ov\Om)$, when $\al+\de<1$ and otherwise they are in $C^{0,\ba}(\ov\Om)$, for some $\ba\in(0,1)$. See \cite{ghergu,hernand1,dpk} for details on the singular problems.\par 
 Turning to the equations involving nonlocal operators, the case of fractional Laplacian is well understood. In particular, in \cite{caffsilv}, authors obtained the interior regularity results, while Ros-Oton and Serra in \cite{ros}, proved the optimal boundary regularity. They proved that the weak solutions, for bounded right hand side, are in $ C^{s}(\mb R^N)$. For the nonlinear case ($p=q\neq 2$ and $s_1=s_2$), Iannizzotto et al. in \cite{iann}, proved that the weak solutions of problem \eqref{probM} with bounded right hand side, belong to the space $C^{0,\al}(\ov\Om)$, for some unspecified $\al\in (0,s_1]$. Here, authors established weak Harnack type inequality to obtain the interior regularity results and used the barrier arguments, in the spirit of Krylov's approach, to prove the boundary behavior of the weak solutions. In \cite{dicastro}, authors used different technique to prove the interior regularity. Here, they established the Caccioppoli inequality and Logarithmic lemma to conclude the local H\"older continuity result for the solution to the following fractional $p$-Laplacian type problems with a symmetric measurable kernel $k(x,y)\approx |x-y|^{-N-ps}$ and $g\in W^{s,p}(\mb R^N)$:
  \begin{equation*}
  	 \mc L_K u = 0 \quad \text{in } \Om, \quad
  		u=g \quad\mbox{in }\mb R^N\setminus\Om.
  \end{equation*}
 Subsequently, in \cite{brascoH}, Brasco et al. obtained the optimal H\"older interior regularity result and proved that the local weak solution $u\in W^{s,p}_{\rm loc}(\Om)$ of problem \eqref{probM} (for the case $2\leq p=q$ and $s_1=s_2=s$) is in $C^{\al}_{\rm loc}(\Om)$, for all $\al<\min\{1,ps/(p-1)\}$, when $f\in L^r_{\rm loc}(\Om)$, for suitable $r>0$. In this work, authors first obtain the improved regularity result for the case $f=0$ and then using suitable scaling argument together with decay transfer technique, they proved the similar result for non-zero $f$.  Combining the boundary behavior of the solution from \cite{iann} (obtained by using suitable barrier arguments) with the interior regularity result of \cite{brascoH}, one can get the $C^{s}$ regularity result up to the boundary. In \cite{ianndist}, Iannizzotto et al. established a slightly different regularity result. Precisely, they extended the results of \cite{ros} to the nonlinear setting and proved that the weak solution $u$ of problem \eqref{probM}, again for the case $2\leq p=q$, $s_1=s_2=s\in (0,1)$ and bounded right hand side, satisfies $\frac{u}{d^s} \in C^{\al}(\ov\Om)$, for some $\al\in (0,1)$. See \cite{kuusi}, for regularity results for nonlocal  problems involving measure data. \par 
 In this paper we also investigate the case of singular nonlinearities. We mention the former contribution in the nonlocal setting concerning homogeneous operators, that is, 
 the following prototype problem: 
 \begin{equation*}
 	\begin{array}{rl}  
 		(-\Delta)^{s}_{p}u = K(x) u^{-\de}, \; u > 0, \, \text{in }
 		\Om, \quad
 		u =0\quad \text{in } \mb R^N\setminus \Om,
 	\end{array}
 \end{equation*}
 where $K$ is a non-negative, locally bounded function which behaves like ${\rm dist}(x,\pa\Om)^{-\ba}$ near the boundary with $\ba\geq 0$ and $\de>0$.
 For the case $p=2$,  Adimurthi et al. \cite{adimurthi} obtained H\"older continuity results for the weak solutions under suitable conditions on $\ba$ and $\de$. Subsequently, Giacomoni et al. in \cite{giacomoni4}, obtained the regularity results for doubly nonlocal problems, that is, $p=2$, $\ba=0$ and with the perturbation of Choquard type term. For the case $p\neq 2$, we mention the works of 
 \cite{arora,canino}. In \cite{canino}, authors obtained certain Sobolev regularity results while in \cite{arora}, authors proved H\"older continuity results up to the boundary. Precisely, they proved that the weak  solutions are in $C^{s-\e}(\mb R^N)$, for all $\e\in(0,s)$ and for the case $p\geq 2$  with $\ba-s(1-\de)\leq 0$, whereas for the other cases, it is in $C^\sg(\mb R^N)$ for some $\sg<s$.\par
 
 Boundary estimates have natural applications as strong maximum principle and Hopf lemma. Concerning the homogeneous case, we mention the work of  Del Pezzo and Quaas in \cite{delpezzo}, where authors proved a strong maximum principle and a Hopf type lemma for continuous super-solutions to the problem:
 \begin{align*}
  (-\De)_p^s u = c(x)|u|^{p-2}u \quad\mbox{in }\Om
 \end{align*}
 where $c$ is a non-positive function and $u\geq 0$ in $\mb R^N\setminus\Om$. Later, Jarohs in \cite{jarohs}, obtained a strong comparison type result  under the condition that either the sub or super solution is bounded and is in $C^{\al}_{\rm loc}(\Om)$ with $\al(p-2)>sp-1$. Recently, in \cite{iannmospap}, authors obtained a more general form of the strong comparison principle  when the super-solution satisfies $(-\De)_p^s u + g(x)u\leq K$ weakly in $\Om$, where $K>0$ is a constant and $g$ is a continuous function. \par
 Recently, the nonlocal problems involving the fractional $(p,q)$-Laplacian   have drawn great attention of the researchers due to its non-homogeneous and nonlinear nature. The existence and multiplicity results are studied broadly but regarding the regularity results, only interior regularity is known. Here, we mention only a few such articles. The existence and multiplicity results are obtained in \cite{ambrosio1,ambrosio}, for the case of $s_1=s_2=s$, while the case $s_1\neq s_2$ is discussed in \cite{bhakta,DDS}. In \cite{ambrosio}, using variational methods, authors established concentration and multiplicity results for problems in the whole $\mb R^N$. While the bounded domain case was considered in \cite{bhakta,DDS}. Particularly, in \cite{DDS}, authors have proved the $L^\infty$ estimate and for the case of $p,q\geq 2$, they obtained the $C^{0,\al}_{\rm loc}$ regularity result, with some unspecified $\al\in (0,1)$. Further, we would like to bring attention of the reader towards the recent work of \cite{bonder}, where authors proved the global H\"older continuity results (much in the spirit of \cite{iann}) for weak solutions to problems involving the fractional $(-\De)_g^s$-Laplacian, where $g$ is a Young's function. However, this does not include our problem, even for the case $s_1=s_2$, because of the power type of growth condition (2.2) and (2.4), there.
   \par 
 Coming back to our paper, the nonhomogeneous nature of the leading operator, $(-\De)_p^{s_1}+ (-\De)_q^{s_2}$, in problem $(\mc P)$
is strengthened by use of different order exponents when $s_1\neq s_2$. This creates several difficulties as 
handling the barrier functions in order to prove the boundary behavior of the weak solutions. In \cite[Theorem 2.10]{DDS}, the interior H\"older regularity result was discussed for bounded solutions. Here,  we remove this restriction and obtain an improved result with a better and optimal H\"older exponent (see Theorem \ref{impintreg}). For this, we prove the local boundedness result of local weak solutions, which uses a new Caccioppoli type inequality (Lemma \ref{lemcaccp}) for non-homogeneous operators.  Due to lack of the scale invariance, unlike \cite{brascoH},  we directly employ Moser's iteration technique on the discrete differential of the solution and exploiting the local boundedness of the function $f$, we obtain the suitable Besov space inclusion. Further, by using embedding results for the Besov spaces to the H\"older spaces, we complete our proof of the interior regularity result, as in theorem \ref{impintreg}.  Subsequently, we establish the asymptotic behavior of the fractional $q$-Laplacian ($(-\De)_q^{s_2}$) of the distance function $d^{s_1}$ near the boundary and using this, we prove almost $d^{s_1}$ boundary behavior (which is optimal) of the weak solution, as in the proposition \ref{upper}. This coupled with the interior H\"older regularity result in Theorem \ref{impintreg} gives us the almost $s_1$-H\"older continuity result globally in $\mb R^N$. As a consequence of this, we obtain the strong maximum principle and the Hopf type maximum principle for non-negative solutions. Additionally, under the restriction that the fractional $q$-Laplacian of the subsolution is bounded from below (in the weak sense), we prove a strong comparison principle. 
In section $4$, we analyze the case of singular nonlinearities by further investigating the existence of barrier functions.  We achieve this by obtaining the existence and uniqueness of the weak solution to  the auxiliary problem \eqref{probsingeps} (defined in section \ref{singl}).  Subsequently, we establish the boundary behavior of the solutions by using the barrier functions and the local boundedness property as in proposition \ref{localbdd}. Consequently, we prove the  H\"older continuity results, up to the boundary, for minimal solutions to \eqref{probsing} and weak solutions to the critical exponent problem \eqref{probsingcrit} that were not known in  former contributions.\par
To summarize, the novelties of the paper lie in consideration of a new class of non-homogeneous nonlocal operators with different order of exponents. We establish almost optimal global H\"older continuity results for problems involving singular as well as non-singular nonlinearities. The absence of scale invariance makes the analysis involved much more delicate than its homogeneous counterpart. For instance, we can not use the barrier function as considered in \cite{iann} (for the homogeneous fractional $p$-Laplacian case) to prove the boundary behavior of the weak solutions. So, we use the distance function to construct an appropriate super solution near the boundary. Due to different order of exponent, the fractional $p$-Laplacian of $d^{s_2}$ (i.e., with the smaller exponent) is an unbounded function, which discards this as a choice for the sub-solution to prove the Hopf type maximum principle. We overcome this difficulty by using the higher exponent instead and we establish its behavior under the fractional $q$-Laplacian, as in the technical Lemma \ref{lemA2}.
Additionally, for the case of singular nonlinearities, we considered the doubly singular nonlinearity where singular weight function is also involved. In this case, the existence and regularity results extended the results of \cite{JDS} to the nonlocal framework and that of \cite{arora} to the non-homogeneous operator setting. Moreover, the strong comparison principle, as in Theorem \ref{strongcomp}, is new even for the singular problems involving fractional $p$-Laplacian.

\section{Preliminaries and Main results}
 \subsection{Notation}
 In this subsection, we fix some notations which we will use through out the paper.
 We set  $t_\pm=\max\{\pm t,0\}$.
 We denote $[t]^{p-1}:=|t|^{p-2}t$, for $p>1$ (when there is no power involved in $[\cdot]$, it should be understood as brackets), and for $S\subset\mb R^{2N}$,
 \begin{align*}
 	&A_p(u,v,S)=\ds \int_{S}\frac{[u(x)-u(y)]^{p-1}(v(x)-v(y))}{|x-y|^{N+p s_1}}~ dxdy, \text{ with} \\
 	&A_q(u,v,S)=\ds \int_{S}\frac{[u(x)-u(y)]^{q-1}(v(x)-v(y))}{|x-y|^{N+q s_2}}~ dxdy. 
 \end{align*}
 Next, for $x_0\in\mb R^N$ and $v\in L^1(B_r(x_0))$, we set \[(v)_{B_r(x_0)}:=\Xint-_{B_r(x_0)}v(x)dx=\frac{1}{|B_r(x_0)|}\int_{B_r(x_0)}v(x)dx.\]
 We define the distance function as $d(x):=\mathrm{dist}(x,\mb R^N\setminus\Omega)$, and a neighborhood of the boundary as $\Om_{\varrho}:=\{ x\in \Om \ : \ d(x)<\varrho \}$,  for $\varrho>0$.
 \subsection{Function Spaces}
 For any $E\subset\mathbb{R}^N$, $1<p<\infty$ and $0<s<1$, the fractional Sobolev space $W^{s,p}(E)$ is defined as 
 \begin{align*}
 	W^{s,p}(E):= \left\lbrace u \in L^p(E): [u]_{W^{s,p}(E)}  < \infty \right\rbrace
 \end{align*}
 \noi endowed with the norm $\|u\|_{W^{s,p}(E)}:=  \|u\|_{L^p(E)}+ [u]_{W^{s,p}(E)}$,
 where \begin{align*}
 	[u]_{W^{s,p}(E)}:=  \left(\int_{E}\int_{E} \frac{|u(x)-u(y)|^p}{|x-y|^{N+sp}}~dxdy  \right)^{1/p}.
 \end{align*} 
 Next, for any (proper) subset $E$ of $\mb R^N$, we have 
 \begin{align*}
 	W^{s,p}_0(E):=\{ u\in W^{s,p}(\mb R^N) \ : \ u=0 \quad\mbox{in }\mb R^N\setminus E \}
 \end{align*}
 which is a uniformly convex Banach space when equipped with the norm  $[\cdot]_{W^{s,p}(\mb R^N)}$ (equivalent to $\| \cdot \|_{W^{s,p}(\mb R^N)}$). Hereafter, 
 we denote the norm $[\cdot]_{W^{s,p}(\mb R^N)}$ on $W^{s,p}_0(E)$ as $\|\cdot \|_{W^{s,p}_0(E)}$.
  Moreover, we have the following relation between the spaces $W^{s_1,p}_0$ and $W^{s_2,q}_0$ (note that for the Lipschitz boundary $\pa E$, these  spaces are the same as $X_{p,s_1}$ and $X_{q,s_2}$, respectively, as defined in \cite{DDS}).
  \begin{Lemma}\cite[Lemma 2.1]{DDS}
   Let $E\subset\mb R^N$ be a bounded domain with Lipschitz boundary. Let $1<q\leq p<\infty$ and $0<s_2<s_1 <1$, then there exists a constant  $C=C(|E|,\;N,\; p,\;q,\;s_1,\;s_2)>0$ such that 
  	\begin{align*}
  	 \|u\|_{W^{s_2,q}_0(E)}\leq C \|u\|_{W^{s_1,p}_0(E)}, \quad \text{for all } \; u \in W^{s_1,p}_0(E).
  	\end{align*}
  \end{Lemma}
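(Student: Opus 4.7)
The plan is to prove the inequality directly from the definitions by splitting the Gagliardo double integral
$$[u]_{W^{s_2,q}(\mb R^N)}^q = \int_{\mb R^N}\int_{\mb R^N} \frac{|u(x)-u(y)|^q}{|x-y|^{N+qs_2}}\,dx\,dy$$
into a near-diagonal piece $\{|x-y|\leq R\}$ and a far-diagonal piece $\{|x-y|>R\}$, where $R:=\mathrm{diam}(E)$, and then bounding each piece by a multiple of $[u]_{W^{s_1,p}(\mb R^N)}^q$. A key observation throughout is that $u$ vanishes outside the bounded set $E$, which both restricts the effective regions of integration and makes available a fractional Poincar\'e-type inequality for $W^{s_1,p}_0(E)$.

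For the near-diagonal part the idea is to apply H\"older's inequality with conjugate exponents $p/q$ and $p/(p-q)$ (in the generic case $p>q$), using the pointwise identity
$$\frac{|u(x)-u(y)|^q}{|x-y|^{N+qs_2}} = \left(\frac{|u(x)-u(y)|^p}{|x-y|^{N+ps_1}}\right)^{q/p}\cdot |x-y|^{q(s_1-s_2) - N(p-q)/p}.$$
After the H\"older step this produces a factor $[u]_{W^{s_1,p}(\mb R^N)}^q$ multiplied by an auxiliary integral $\int\int_{|x-y|\leq R} |x-y|^{-N + pq(s_1-s_2)/(p-q)}\,dx\,dy$, restricted to the neighborhood $(E+B_R)\times(E+B_R)$ that actually carries nonzero contribution by the support condition on $u$. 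This auxiliary integral is finite precisely because $s_1>s_2$ makes the exponent strictly greater than $-N$. When $p=q$ no H\"older step is needed: simply dominate $|x-y|^{-N-qs_2}$ by $\max(1,R)^{q(s_1-s_2)}|x-y|^{-N-qs_1}$ on $\{|x-y|\leq R\}$.

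For the far-diagonal part, expand $|u(x)-u(y)|^q \leq 2^{q-1}(|u(x)|^q + |u(y)|^q)$ and use $\mathrm{supp}\,u\subset E$ to reduce the estimate to $C\|u\|_{L^q(E)}^q\int_{|z|>R}|z|^{-N-qs_2}\,dz$, which is finite because $s_2>0$. A standard H\"older inequality using boundedness of $E$ then gives $\|u\|_{L^q(E)}\leq |E|^{1/q-1/p}\|u\|_{L^p(E)}$, and the fractional Poincar\'e inequality for $W^{s_1,p}_0(E)$ (which comes for free from $u$ having bounded support and an arbitrary test point $y\notin E$) yields $\|u\|_{L^p(E)}\leq C(|E|,p,s_1)\,[u]_{W^{s_1,p}(\mb R^N)}$. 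Summing the two pieces and taking $q$-th roots gives the desired inequality, with the constant depending only on $|E|,N,p,q,s_1,s_2$. The one delicate point — and the place where the hypotheses $s_1>s_2$ and $s_2>0$ are both essential — is verifying convergence of the two auxiliary power-of-$|x-y|$ integrals; the rest is pure bookkeeping of constants.
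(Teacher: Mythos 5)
The paper does not prove this lemma; it is cited verbatim from \cite[Lemma 2.1]{DDS}, so there is no in-text proof to compare against. Your argument is correct and is the standard proof of this kind of embedding: the near/far splitting of the Gagliardo double integral at $R=\mathrm{diam}(E)$, H\"older with exponents $p/q$ and $p/(p-q)$ on the near-diagonal piece (with the support condition on $u$ confining the auxiliary integral $\iint_{|x-y|\le R}|x-y|^{-N+pq(s_1-s_2)/(p-q)}$ to the bounded set $(E+B_R)\times(E+B_R)$, where $s_1>s_2$ makes it convergent), and on the far-diagonal piece the bound $2^{q-1}(|u(x)|^q+|u(y)|^q)$ together with $\int_{|z|>R}|z|^{-N-qs_2}\,dz<\infty$ (here $s_2>0$ is used), $L^p\hookrightarrow L^q$ on $E$, and the fractional Poincar\'e inequality for $W^{s_1,p}_0(E)$. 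The only cosmetic quibble is that, since $s_1>s_2$ and $|x-y|\le R$, you can take the constant $R^{q(s_1-s_2)}$ directly in the $p=q$ case rather than $\max(1,R)^{q(s_1-s_2)}$; this changes nothing.
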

 Furthermore, for any bounded $E\subset\mb R^N$ and $0<s_2<s_1<1$, an easy consequence of the H\"older inequality yields 
 \[ \|u\|_{W^{s_2,q}(E)}\leq C \|u\|_{W^{s_1,p}(E)}, \quad \text{for all } \; u \in W^{s_1,p}(E). \]
 Note that, for $s_1=s_2$, the above embedding results are not true, see \cite[Theorem 1.1]{mironescu} for counter examples.
In what follows, we will consider only the case $s_1\neq s_2$ and work with the space $W^{s_1,p}$. The case $s_1=s_2=s$, runs analogously by considering the space $\mc W:= W^{s,p}(E)\cap W^{s,q}(E)$ (in place of $W^{s_1,p}(E)$),
equipped with the norm $\|\cdot\|_{W^{s,p}(E)}+\|\cdot\|_{W^{s,q}(E)}$. Similarly, we take $\mc W_0:=W^{s,p}_0(E)\cap W^{s,q}_0(E)$, equipped with the norm $\|\cdot\|_{W^{s,p}_0(E)}+\|\cdot\|_{W^{s,q}_0(E)}$.
 \begin{Definition}\label{defTail}
 Let $u:\mb R^N\to \mb R$ be a measurable function, $1<m<\infty$ and $\al>0$. We define the tail space as below:
 	\begin{align*}
 	  L^{m}_{\al}(\mb R^N) = \bigg\{ u\in L^{m}_{\rm loc}(\mb R^N) : \int_{\mb R^N} \frac{|u(x)|^{m}dx}{(1+|x|)^{N+\al}} <\infty \bigg\}.
 	\end{align*} 
  The nonlocal tail centered at $x_0\in\mb R^N$ with radius $R>0$ is defined as 
 	\begin{align*}
 	 T_{m,\al}(u;x_0,R)=\left(R^{\al}\int_{B_R(x_0)^c} \frac{|u(y)|^{m}}{|x_0-y|^{N+\al}} dy\right)^{1/m}.
 	\end{align*}
 Set $T_{m,\al}(u;R)=T_{m,\al}(u;0,R)$. We will follow the notation $T_{p-1}(u;x,R):=T_{p-1,s_1p}(u;x,R)$ and $T_{q-1}(u;x,R):=T_{q-1,s_2q}(u;x,R)$, unless otherwise stated.
 \end{Definition}
 \begin{Definition}
  Let $\Om\subset\mb R^N$ be a bounded set. Define
 	\begin{align*}
 	 \widetilde{W}^{s,p}(\Om):= \bigg\{u\in L^p_{\rm loc}(\mb R^N)\cap L^{p-1}_{sp}(\mb R^N): \exists\;U\Supset\Om \ \mbox{with }\ \|u\|_{W^{s,p}(U)} <\infty \bigg\}.
 \end{align*}\end{Definition} 
 
 For a given $g\in L^{p-1}_{s_1p}(\mb R^N)$, we define a subset of $\widetilde{W}^{s_1,p}(\Om)$ as below.
 \begin{Definition}
 Let $\Om\Subset\Om'\subset\mb R^N$ and $1<p<\infty$ with $s_1\in (0,1)$, then we define
 	\begin{align*}
 	 X^{s_1,p}_g(\Om,\Om'):= \{ v\in W^{s_1,p}(\Om')\cap L^{p-1}_{s_1p}(\mb R^N) : v=g \quad\mbox{a.e. in }\mb R^N\setminus\Om \},
 	\end{align*}
 equipped with the norm of $W^{s_1,p}(\Om')$.
 \end{Definition}
 Note that $X^{s_1,p}_g(\Om,\Om')$ is non-empty for all $g\in W^{s_1,p}(\Om')\cap L^{p-1}_{s_1p}(\mb R^N)$.
 \subsection{Statements of main results}
 In this subsection, we define the notion of a weak solution to different problems and state our main results.
 \begin{Definition}[Local weak solution]\label{defnlocal}
  A function $u\in W^{s_1,p}_{\rm loc}(\Om)\cap L^{p-1}_{s_1p}(\mb R^N) \cap L^{q-1}_{s_2q}(\mb R^N)$ is said to be a local weak solution of problem \eqref{probM} if 
 	\begin{align}\label{solndef}
 		A_p(u,\phi,\mb R^N\times \mb R^N) + A_q (u, \phi,\mb R^N\times \mb R^N)= \int_{\Om} f\phi dx,
 	\end{align}
  for all $\phi\in W^{s_1,p}(\Om)$ 
  compactly supported in $\Om$. 
 \end{Definition}
 Now, we state an improved interior H\"older regularity result for local weak solutions. 
 \begin{Theorem}\label{impintreg}
 Suppose $2\leq q\leq p<\infty$ and $0<s_2\leq s_1<1$. Let $u\in W^{s_1,p}_{\rm loc}(\Om)\cap L^{p-1}_{s_1p}(\mb R^N) \cap  L^{q-1}_{s_2q}(\mb R^N)$ be a local weak solution to problem \eqref{probM}. Then, for every $\sg\in (0,\min\{1,\frac{ps_1}{p-1}\})$, $u\in C^{0,\sg}_{\mathrm{loc}}(\Om)$. Moreover, for all $\sg\in (0,s_1)$ and $\bar R_0\in (0,1)$ with $x_0\in\mb R^N$ such that $B_{2\bar R_0}(x_0)\Subset\Om$, the following holds
 	\begin{align*}
 	 [u]_{C^{\sg}(B_{\bar R_0/2}(x_0))}\leq \big(C K_2(u)\big)^{i_\infty} \big( [u]_{W^{s_1,p}(B_{\bar R_0}(x_0))} + 1 \big)^{2-\sg/s_1}
 	\end{align*}
  where $i_\infty\in\mb N$ is such that $i_\infty>N/(s_1-\sg)-p$, $C=C(N,s_1,p,\sg)>0$ is a constant (which blows up as $\sg\to s_1^-$), and $K_2(u):=K_2(u,i_\infty,\bar R_0)$ is given by
  \begin{align*}
  	K_2(u)=1+T_{p-1}(u;x_0,\bar R_0)^{p-1} +T_{q-1}(u;x_0,\bar R_0)^{q-1} 
  	+\|u\|^{\frac{(p+i_\infty)(p-1)}{p-2}}_{L^\infty(B_{\bar R_0}(x_0))} +\|f\|_{L^\infty(B_{\bar R_0}(x_0))}.
  \end{align*} 
 \end{Theorem}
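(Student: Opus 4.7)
The plan is to follow the finite-difference Moser iteration approach of Brasco et al. \cite{brascoH}, suitably adapted to handle the non-homogeneous operator $(-\De)_p^{s_1}+(-\De)_q^{s_2}$. The argument splits naturally into a local boundedness step, a finite-difference estimate, an iteration, and a Besov-to-H\"older embedding.

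First, I would establish the local boundedness $u\in L^\infty_{\rm loc}(\Om)$ by a Moser iteration based on the Caccioppoli-type inequality (Lemma \ref{lemcaccp}) for our non-homogeneous operator. This produces an $L^\infty$ bound of $u$ on $B_{\bar R_0}(x_0)$ controlled by a local $L^p$ norm, the tails $T_{p-1}(u;x_0,\bar R_0)$, $T_{q-1}(u;x_0,\bar R_0)$ and $\|f\|_{L^\infty}$; this is the origin of the $\|u\|_{L^\infty(B_{\bar R_0}(x_0))}^{(p+i_\infty)(p-1)/(p-2)}$ term appearing in $K_2(u)$. Since $(-\De)_q^{s_2}$ has smaller order than $(-\De)_p^{s_1}$ (as $s_2\le s_1$ and $q\le p$), its contribution to the Caccioppoli estimate is treated as a perturbation and absorbed via the $L^\infty$ bound, which is one of the points where the non-homogeneous structure must be handled by hand.

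Next, for $h\in\mb R^N$ with $|h|$ small and $B_r\Subset B_{\bar R_0}(x_0)$, I would test the weak formulation \eqref{solndef} with a second finite difference of a suitable power of $\tau_h u(x):=u(x+h)-u(x)$. Combined with the monotonicity inequality $([a]^{p-1}-[b]^{p-1})(a-b)\gtrsim |a-b|^p$ valid for $p\ge 2$, this gives, for every $\beta\ge 0$, an estimate of the type
\begin{align*}
\big[|\tau_h u|^{(p+\beta)/p}\big]_{W^{s_1,p}(B_r)}^{p}\;\lesssim\; |h|^{s_1 p}\, M(u,\beta),
\end{align*}
where $M(u,\beta)$ depends polynomially on $\|u\|_{L^\infty}$, the tails, $\|f\|_{L^\infty}$, and $\beta$. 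The fractional $q$-Laplacian contribution is once again absorbed by the $L^\infty$ bound. Iterating along a sequence $\beta_i\uparrow\infty$ in the spirit of Moser produces
\begin{align*}
\sup_{0<|h|\le \eta}\frac{\|\tau_h u\|_{L^{p+i}(B_{r_i})}}{|h|^{\sigma_i}}\;\le\; C_i,
\end{align*}
with $\sigma_i$ approaching $\min\{1,ps_1/(p-1)\}$ and $C_i$ controlled by $K_2(u)^{i}$; the threshold $i_\infty>N/(s_1-\sigma)-p$ is precisely what is required for the subsequent embedding step.

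Finally, the finite-difference bound places $u$ in the Nikol'skii space $\mc N^{\sigma,\infty}_{p+i_\infty}(B_{\bar R_0/2}(x_0))$, which embeds continuously into $C^{0,\sigma}(B_{\bar R_0/2}(x_0))$ under $i_\infty>N/(s_1-\sigma)-p$, delivering the quantitative H\"older estimate as stated. The hard part will be the absence of scale invariance: unlike in \cite{brascoH}, one cannot rescale $u$ to normalize constants, so every quantity produced by the iteration must be tracked explicitly, and the different-order $(-\De)_q^{s_2}$ contribution must be absorbed by the $L^\infty$ bound at each stage, producing the high power of $\|u\|_{L^\infty}$ visible in $K_2(u)$. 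Pushing $\sigma$ up to the optimal value $\min\{1,ps_1/(p-1)\}$ requires an additional improvement-of-H\"older-exponent sub-iteration that exploits the $p-1$ power in the monotonicity inequality, which is where the most delicate tracking of constants takes place.
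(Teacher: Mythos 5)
Your proposal follows essentially the same finite-difference Moser iteration scheme as the paper (adapted from Brasco--Lindgren--Schikorra): local boundedness via a nonlocal Caccioppoli inequality, a second-difference Besov-type improvement step iterated first in the integrability exponent (to reach any $\sigma<s_1$) and then in the H\"older exponent (to reach $\sigma<\min\{1,ps_1/(p-1)\}$), and a Nikol'skii/Besov-to-H\"older embedding at the end. One point worth sharpening: the main $q$-Laplacian contribution is handled not by ``absorption via the $L^\infty$ bound'' but by observing that monotonicity of $[\,\cdot\,]^{q-1}$ makes the leading $I_1(q,s_2)$ term non-negative and hence discardable in the key inequality (testing with $[\de_h u/|h|^\nu]^\beta\eta^p$ and its translate); only the cutoff and tail error terms it generates are then bounded by $\|u\|_{L^\infty}$ and tail quantities, which is crucial since $(-\De)_q^{s_2}u$ itself cannot be controlled by $\|u\|_{L^\infty}$ alone.
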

 \begin{Remark}
	We remark that the range of $\sg\in  (0,\min\{1,\frac{ps_1}{p-1}\})$ is sharp in theorem \ref{impintreg}. Indeed, as consider in \cite[Example 1.6]{brascoH}, for the case $s_1=s_2=1/2$ and $p=q=2$, the function $u(x):=\int_{B} |x-y|^{1-N}dy$, which solves $(-\De)^{1/2}u=1_B$, with $B\subset \mb R^N$ is a unit ball and $N\geq 3$, fails to be Lipschitz continuous (the quantity $\frac{ps_1}{p-1}=1$).
\end{Remark}
 
Then, we deduce:
 
\begin{Theorem}[Boundary regularity]\label{bdryreg}
 Suppose $2\leq q\leq p<\infty$ and $0<s_2\leq s_1<1$. Let $u\in  W^{s_1,p}_0(\Om)$ be a solution to problem \eqref{probM} with $f\in L^\infty(\Om)$. Then, for every $\sg\in (0,s_1)$, $u\in C^{0,\sg}(\ov\Om)$. Moreover, 
 \begin{align}\label{holderbd}
 	\| u \|_{C^\sg(\ov\Om)} \leq C,
 \end{align}
 where $C=C(\Om,N,s_1,p,\sg, \|f\|_{L^\infty(\Om)})>0$ is a constant (which blows up as $\sg\to s_1^-$ and depends as a non-decreasing function of $\|f\|_{L^\infty(\Om)}$).
\end{Theorem}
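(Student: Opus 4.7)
The plan is to combine three ingredients that are already at our disposal (or that the paper advertises immediately before this statement): a global $L^\infty$ bound on $u$, the ``almost $d^{s_1}$'' boundary decay furnished by Proposition \ref{upper}, and the interior Hölder regularity of Theorem \ref{impintreg}. Because $u\in W^{s_1,p}_0(\Om)$ vanishes outside $\Om$, the nonlocal tails $T_{p-1}(u;x_0,R)$ and $T_{q-1}(u;x_0,R)$ appearing in the interior estimate are a priori controlled by $\|u\|_{L^\infty(\mb R^N)}$, so the real work is to produce (i) the boundary decay and (ii) an interior–to–boundary matching argument.

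First, I would record that $u\in L^\infty(\Om)$ (hence $u\in L^\infty(\mb R^N)$, since $u\equiv 0$ outside $\Om$). This is the local boundedness result based on the Caccioppoli inequality Lemma \ref{lemcaccp} together with Moser iteration, applied to $u$ up to the boundary after a standard extension/truncation; alternatively it is a direct consequence of Proposition \ref{localbdd}. Once this is in hand, the tail terms $T_{p-1}(u;x_0,R),\,T_{q-1}(u;x_0,R)$ in the $K_2(u)$ constant of Theorem \ref{impintreg} are bounded by a multiple of $\|u\|_{L^\infty}$ uniformly in $x_0$.

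Next, I invoke Proposition \ref{upper} to obtain, for every $\sg\in(0,s_1)$,
\begin{equation*}
 |u(x)|\leq C\,d(x)^{\sg}\qquad\text{for all }x\in\Om,
\end{equation*}
with $C=C(\Om,N,p,q,s_1,s_2,\sg,\|f\|_{L^\infty(\Om)})$. This is the step where the difficulty of the non‑homogeneous operator is concentrated: the barrier has to be built out of (a truncated smoothing of) $d^{s_1}$, and one must show that $(-\De)^{s_1}_p d^{s_1}+(-\De)^{s_2}_q d^{s_1}$ remains bounded near $\pa\Om$, for which the technical Lemma \ref{lemA2} on $(-\De)^{s_2}_q d^{s_1}$ is essential (notice $d^{s_2}$ would not work as a supersolution because its fractional $p$-Laplacian blows up at the boundary). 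A weak comparison principle between $u$ and $\pm C\,d^{\sg}$ then yields the pointwise decay.

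Finally, I combine the two estimates by the standard interior/boundary dichotomy. Fix $\sg\in(0,s_1)$ and take $x,y\in\ov\Om$ with, say, $d(x)\le d(y)$ and $r:=|x-y|$. If $r\ge d(x)/8$ (the boundary regime), then $d(x),d(y)\le C r$ and the boundary decay gives
\begin{equation*}
 |u(x)-u(y)|\le |u(x)|+|u(y)|\le C\,(d(x)^{\sg}+d(y)^{\sg})\le C\,r^{\sg}.
\end{equation*}
If $r<d(x)/8$ (the interior regime), then $B_{d(x)/2}(x)\Subset\Om$ and Theorem \ref{impintreg}, applied at scale $\bar R_0=d(x)/4$ and rescaled with the $L^\infty$ bound plus the tail control just discussed, yields
\begin{equation*}
 |u(x)-u(y)|\le [u]_{C^\sg(B_{\bar R_0/2}(x))}\,r^{\sg}\le C\,r^{\sg},
\end{equation*}
with $C$ independent of $x$ because $K_2(u)$ and $[u]_{W^{s_1,p}(B_{\bar R_0}(x))}$ are bounded by constants depending only on $\|u\|_{L^\infty}$, $\|f\|_{L^\infty}$ and the energy $\|u\|_{W^{s_1,p}_0(\Om)}$. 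Combining the two cases, $[u]_{C^{\sg}(\ov\Om)}\le C$, which together with the $L^\infty$ bound gives the claimed estimate \eqref{holderbd}. The main obstacle is Step 2 (the barrier construction and the supersolution argument via Lemma \ref{lemA2}); everything else is bookkeeping on scales and tails. The blow‑up of the constant as $\sg\to s_1^-$ reflects the corresponding blow‑up already present in the interior estimate of Theorem \ref{impintreg} and in the boundary decay exponent.
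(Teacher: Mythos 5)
Your proposal follows exactly the same route as the paper, whose proof is a one‑line remark that the argument is standard and follows \cite[Theorem 1.1]{iann} by combining Theorem \ref{impintreg} with Proposition \ref{upper}; you have simply unfolded that reference into its three ingredients (global $L^\infty$ bound via the local boundedness machinery, boundary decay via Proposition \ref{upper}, and the interior/boundary dichotomy matching $[u]_{C^\sigma}$ across scales). The identification of Lemma \ref{lemA2} and the $d^{s_1}$‑barrier as the genuinely new technical step is also the same emphasis the paper places on Proposition \ref{upper}, so there is no substantive deviation.
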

As a direct consequence, we have:
\begin{Corollary}\label{cornonpb}
 Let $u\in W^{s_1,p}_0(\Om)$ be a solution to problem \eqref{probM} with $f(x):=f(x,u)$, a Carath\'eodory function such that $|f(x,t)|\leq C_0 (1+|t|^{p^*_{s_1}-1})$, where $C_0 >0$, and $p^*_{s_1}:=Np/(N-ps_1)$ if $N>ps_1$, otherwise an arbitrarily large number. Then, $u\in C^{0,\sg}(\ov\Om)$, for all $\sg\in (0,s_1)$, and \eqref{holderbd} holds. 
\end{Corollary}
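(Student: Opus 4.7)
The plan is to reduce Corollary \ref{cornonpb} to Theorem \ref{bdryreg} by showing that the composed right hand side $x\mapsto f(x,u(x))$ belongs to $L^\infty(\Om)$. Since the growth of $f(x,\cdot)$ is controlled by the power $|t|^{p^*_{s_1}-1}$, it suffices to establish that $u\in L^\infty(\Om)$; once this is in hand, $\|f(\cdot,u)\|_{L^\infty(\Om)}\le C_0\big(1+\|u\|_{L^\infty(\Om)}^{p^*_{s_1}-1}\big)$, and an application of Theorem \ref{bdryreg} to this bounded source term delivers both $u\in C^{0,\sg}(\ov\Om)$ for every $\sg\in(0,s_1)$ and the bound \eqref{holderbd}, with the constant depending in a non-decreasing way on $\|f(\cdot,u)\|_{L^\infty(\Om)}$.

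To obtain $u\in L^\infty(\Om)$ I would run a Moser-type iteration based on the Caccioppoli inequality of Lemma \ref{lemcaccp}. The hard part is that the growth of $f$ is \emph{critical}: the natural test function $|u|^{\beta(p-1)}u$ generates a right hand side involving $\int_\Om |u|^{p^*_{s_1}+\beta(p-1)-1}\,dx$, which sits exactly at the borderline of the fractional Sobolev embedding $W^{s_1,p}_0(\Om)\hookrightarrow L^{p^*_{s_1}}(\Om)$. I plan to handle this by the standard Brezis--Kato bootstrap: write
\begin{align*}
	|f(x,u(x))| \le C_0 + a_M(x)\,|u(x)|^{p-1},\qquad a_M(x):=C_0\,\mathbf{1}_{\{|u|>M\}}|u(x)|^{p^*_{s_1}-p},
\end{align*}
and observe that $a_M\in L^{N/(s_1p)}(\Om)$ has arbitrarily small norm when $M$ is large, because $|u|^{p^*_{s_1}}\in L^1(\Om)$. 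Testing the weak equation with truncated powers of $u$, invoking Lemma \ref{lemcaccp} together with the fractional Sobolev inequality, the $a_M$-contribution can be absorbed into the left-hand side, and a standard iteration yields $u\in L^r(\Om)$ for every finite $r$.

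Once $u\in L^r(\Om)$ with $r$ so large that $r/(p^*_{s_1}-1)>N/(s_1p)$, the composed source $f(\cdot,u)$ lies in a Lebesgue space of sufficiently high integrability to feed into a standard Moser iteration (in the spirit of the $L^\infty$ estimate for the fractional $(p,q)$-Laplacian obtained in \cite{DDS}, and of the local boundedness result of Proposition \ref{localbdd}), giving $u\in L^\infty(\Om)$. The main obstacle in the argument is the clean management of the nonlocal tails $T_{p-1}(u;x_0,R)$ and $T_{q-1}(u;x_0,R)$ that arise at each iteration step for the non-homogeneous operator; however, since $u\in W^{s_1,p}_0(\Om)$ is globally defined and vanishes outside $\Om$, these tails are uniformly controlled by $\|u\|_{W^{s_1,p}_0(\Om)}$ and only contribute constants depending on the Dirichlet energy of $u$. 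With $u\in L^\infty(\Om)$ secured, the statement follows verbatim from Theorem \ref{bdryreg}.
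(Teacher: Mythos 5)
Your reduction — establish $u\in L^\infty(\Om)$, then feed the bounded source $f(\cdot,u)$ into Theorem \ref{bdryreg} — is exactly the structure of the paper's proof. The difference is that the paper does not re-derive the $L^\infty$ bound at this point: it simply invokes Theorem \ref{mainbound}, which already asserts $u\in L^\infty(\Om)$ for $W^{s_1,p}_0(\Om)$ solutions with Carath\'eodory nonlinearities satisfying the critical growth condition $|f(x,t)|\le C_0(1+|t|^{p^*_{s_1}-1})$, and whose proof in turn follows \cite[Theorem 3.3]{chenJFA}: testing with the truncated powers $g_\kappa(u)=u(\llcorner u\lrcorner_m)^\kappa$, using \cite[Lemma A.2]{brasco2} to pass these through $(-\De)_p^{s_1}$ and discard the nonnegative contribution of $(-\De)_q^{s_2}$, and iterating. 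Your Brezis--Kato decomposition $a_M=C_0\mathbf{1}_{\{|u|>M\}}|u|^{p^*_{s_1}-p}\in L^{N/(s_1p)}(\Om)$, with small norm for $M$ large, is the same device in slightly different clothing, so the two arguments are morally identical; the paper's version is just packaged into a citable theorem. Two small inaccuracies worth flagging, neither fatal: (i) the constant in your splitting should be $C_0(1+M^{p^*_{s_1}-1})$, not $C_0$, since on $\{|u|\le M\}$ the critical power contributes $M^{p^*_{s_1}-1}$; (ii) Lemma \ref{lemcaccp} is a \emph{local} Caccioppoli inequality built with compactly supported cut-offs $\psi\in C_c^\infty(E)$, $E\Subset\Om''$, so it is not the natural vehicle for a \emph{global} $L^r$ bootstrap on $W^{s_1,p}_0(\Om)$ — testing the weak formulation directly with global truncations, as the paper does, sidesteps the cut-offs and the nonlocal tail terms you then have to re-absorb. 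With those adjustments your argument is correct and reproduces the content of Theorem \ref{mainbound} before concluding via Theorem \ref{bdryreg}.
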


 \begin{Remark}
	The regularity results for the case $1<q\leq p<2$ and $1<q <2 \leq p$ are still open. The crucial step is the interior regularity. Indeed, we note that by establishing the Logarithmic lemma (similar to \cite[Lemma 1.3]{dicastro}) and using the Caccioppoli inequality \eqref{caccioppnew}, one can prove the interior regularity result for weak solutions of \eqref{probM} with bounded $f$. This coupled with the boundary behavior, given by proposition \ref{uppdist}, implies that the solutions are in $C^{0,\al}(\ov\Om)$, for some $\al\in (0,1)$. 
\end{Remark}

Now we state our strong maximum principle for non-negative continuous super-solutions.
\begin{Theorem}[Strong maximum principle]\label{strongmax}
  Suppose $2\leq q\leq p<\infty$ and $0<s_2\leq s_1<1$. Let $u\in  W^{s_1,p}_0(\Om)\cap C(\ov\Om)$ be a weak super-solution of $(-\De)_{p}^{s_1} u+  (-\De)_q^{s_2} u=0$ in $\Om$ with $u\geq 0$ in $\mb R^N\setminus\Om$. Then, either $u= 0$ in $\mb R^N$ or $u>0$ in $\Om$.
\end{Theorem}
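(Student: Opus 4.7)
The plan is to argue by contradiction. Since $u \in W^{s_1,p}_0(\Om)$ forces $u = 0$ a.e.\ in $\mb R^N \setminus \Om$, the hypothesis $u \geq 0$ outside $\Om$ is automatic, so it suffices to derive a contradiction from the existence of $x_0 \in \Om$ with $u(x_0) = 0$ under the assumption $u \not\equiv 0$. The argument proceeds in two stages: first \emph{local vanishing} ($u \equiv 0$ on some ball $B_r(x_0) \Subset \Om$), then \emph{nonlocal propagation} (spreading the vanishing to all of $\mb R^N$).

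For the local vanishing, I would invoke a weak Harnack inequality for non-negative weak super-solutions of the non-homogeneous $(p,q)$-problem. This can be derived in a Moser-style fashion from the Caccioppoli-type inequality (Lemma \ref{lemcaccp}) together with a Di Castro--Kuusi--Palatucci type logarithmic lemma, both adapted to the non-homogeneous operator: the $q$-contribution appears as an additive lower-order perturbation that does not destroy the iteration. Since $u_- \equiv 0$, the negative-tail term in the resulting inequality vanishes, yielding, for some $\vep > 0$ and $r$ small with $B_{2r}(x_0) \Subset \Om$,
\begin{align*}
	\Big(\dashint_{B_r(x_0)} u^\vep\, dx \Big)^{1/\vep} \leq C \inf_{B_r(x_0)} u.
\end{align*}
The continuity of $u$ on $\overline{\Om}$ with $u(x_0) = 0$ and $u \geq 0$ gives $\inf_{B_r(x_0)} u = 0$, hence $u \equiv 0$ a.e.\ in $B_r(x_0)$.

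For the nonlocal propagation, set $B := B_r(x_0)$ and pick $\phi \in C^\infty_c(B)$ with $\phi \geq 0$ and $\phi \not\equiv 0$. Decomposing $\mb R^{2N}$ as $(B \times B) \cup (B \times B^c) \cup (B^c \times B) \cup (B^c \times B^c)$ and plugging in $u \equiv 0$ on $B$ and $\phi \equiv 0$ on $B^c$, the pieces over $B \times B$ and $B^c \times B^c$ vanish identically; on $B \times B^c$ the integrand of $A_p(u,\phi,\mb R^{2N})$ reduces to $[-u(y)]^{p-1}\phi(x)/|x-y|^{N+s_1 p} = -u(y)^{p-1}\phi(x)/|x-y|^{N+s_1 p} \leq 0$ (using $u(y) \geq 0$), while $B^c \times B$ contributes the same by symmetry, and the $q$-part is handled analogously. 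The super-solution inequality then reads
\begin{align*}
	0 \leq -2 \int_B \int_{B^c} \frac{u(y)^{p-1}\phi(x)}{|x-y|^{N+s_1 p}}\, dy\, dx -2 \int_B \int_{B^c} \frac{u(y)^{q-1}\phi(x)}{|x-y|^{N+s_2 q}}\, dy\, dx \leq 0,
\end{align*}
forcing each integral to vanish. Since $\int_B \phi > 0$, we conclude $u = 0$ a.e.\ on $B^c$, which together with $u \equiv 0$ on $B$ gives $u \equiv 0$ a.e.\ in $\mb R^N$, contradicting $u \not\equiv 0$.

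The main obstacle is the local vanishing step, that is, establishing the weak Harnack inequality for the non-homogeneous $(p,q)$-operator. The absence of scale invariance demands careful control of the $q$-term in both the Caccioppoli estimate and the Moser iteration so that it remains a lower-order perturbation; once this is in hand, the nonlocal propagation is essentially immediate from the sign analysis above.
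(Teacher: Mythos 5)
Your overall strategy matches the paper's: establish $u\geq 0$, use a weak Harnack inequality for non-negative super-solutions to force local vanishing around a zero of $u$, then exploit the nonlocal structure to propagate the vanishing and derive a contradiction. The sign analysis in your propagation step (decomposing into $B\times B$, $B\times B^c$, etc., and observing that the off-diagonal terms are non-positive) is exactly the same computation the paper carries out. The only organizational difference is that the paper phrases the local step topologically — the Harnack inequality shows the zero set $U_0$ is open, hence a union of connected components of $\Omega$, and then tests with $\psi$ supported in one such component — whereas you test directly on a ball on which $u$ vanishes. Your version is if anything slightly more streamlined. Also, the weak Harnack inequality you identify as the main technical ingredient is already available in this non-homogeneous setting (Lemma 2.7 of reference \cite{DDS}, which the paper invokes), so there is no need to re-derive it from Caccioppoli plus a logarithmic lemma as you suggest.

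There is, however, one genuine gap. You remark that since $u\in W^{s_1,p}_0(\Omega)$ the hypothesis $u\geq 0$ in $\mathbb R^N\setminus\Omega$ is automatic, and then proceed directly to the assumption that $u\geq 0$ everywhere (e.g., you use ``$u_-\equiv 0$'' when killing the negative tail term, and ``$u(y)\geq 0$'' when signing the $B\times B^c$ integrand). But $u\geq 0$ \emph{inside} $\Omega$ is not automatic and must be proved: a super-solution need not be non-negative a priori. The paper handles this first by appealing to the weak comparison principle; equivalently, one can test the super-solution inequality with $\phi=u_-$ (admissible since $u_-\in W^{s_1,p}_0(\Omega)$), use that $[u(x)-u(y)]^{p-1}(u_-(x)-u_-(y))\leq -|u_-(x)-u_-(y)|^p$ and similarly for the $q$-term, and conclude $u_-\equiv 0$. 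Without this step, neither the weak Harnack inequality (which requires a non-negative super-solution) nor your final sign argument is justified. It is a short fix, but as written your proof silently assumes what should be established.
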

 Next, we have the following Hopf type maximum principle.
 \begin{Proposition}\label{hopf}
  Suppose the hypotheses of Theorem \ref{strongmax} are true in addition to $s_1\neq q's_2$.   Then, either $u= 0$ in $\mb R^N$ or 
	\begin{align*}
	 \liminf_{\Om\ni x\to x_0} \frac{ u(x)}{d(x)^{s_1}}>0 \quad\mbox{for all } x_0\in\pa\Om.
	\end{align*}
\end{Proposition}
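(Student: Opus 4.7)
The plan is to combine the strong maximum principle with the construction of a $d^{s_1}$-shaped subsolution barrier on a boundary strip $\Om_\varrho$, followed by a weak comparison argument with $u$.

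First, I would invoke Theorem \ref{strongmax}: either $u\equiv 0$ in $\mb R^N$, or $u>0$ in $\Om$. In the latter case, continuity of $u$ on $\ov\Om$ and compactness of $\ov{\Om\setminus\Om_\varrho}$ yield a constant $m>0$ with $u\geq m$ on $\{d\geq\varrho\}$, for any fixed small $\varrho>0$. My goal is to produce $\e>0$ and a function $\psi$ with $\psi=\e d^{s_1}$ in $\Om_\varrho$ and $\psi\leq u$ globally; the Hopf estimate then follows by letting $x\to x_0$ along the normal direction.

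Next, I would define $\psi(x):=\e\, d(x)^{s_1}$ on $\Om_\varrho$, $\psi(x):=\e\, \varrho^{s_1}$ on $\Om\setminus\Om_\varrho$, and $\psi\equiv 0$ on $\mb R^N\setminus\Om$, where $\varrho$ is chosen small enough that the $C^{1,1}$ regularity of $\pa\Om$ gives clean asymptotics for $d^{s_1}$ in $\Om_{2\varrho}$. The central step is to verify
\[
(-\De)_p^{s_1}\psi+(-\De)_q^{s_2}\psi\leq 0\quad\text{weakly in } \Om_\varrho,
\]
for $\e>0$ sufficiently small. The $p$-contribution is controlled in $\Om_\varrho$ by the standard computation for $(-\De)_p^{s_1}(d^{s_1})$ (cf.\ \cite{ianndist,iann}), giving a term of size $O(\e^{p-1})$. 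For the $q$-contribution, I would appeal to Lemma \ref{lemA2}, which provides the precise asymptotic behavior of $(-\De)_q^{s_2}(d^{s_1})$ as $d\to 0^+$: the hypothesis $s_1\neq q's_2$, equivalent to $s_1(q-1)\neq qs_2$, prevents a logarithmic resonance in the defining integral and forces the leading term to be a pure power $d^{\, s_1(q-1)-qs_2}$ with a definite sign. Selecting the scenario in which this leading term is negative and blows up as $d\to 0$, the $O(\e^{q-1})$ contribution from the $q$-part dominates the bounded $O(\e^{p-1})$ piece on $\Om_\varrho$ (recall $\e<1$ and $q\leq p$), so the displayed inequality holds once $\e$ is small enough.

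With the subsolution inequality in hand, weak comparison for the operator $T(v):=(-\De)_p^{s_1}v+(-\De)_q^{s_2}v$ --- obtained by testing $T(\psi)-T(u)\leq 0$ against $(\psi-u)_+\in W^{s_1,p}_0(\Om_\varrho)$ and invoking the monotonicity of both integrands --- combined with $\psi\leq u$ on $\mb R^N\setminus\Om_\varrho$ delivers $\psi\leq u$ in $\Om_\varrho$, hence $u(x)\geq\e\, d(x)^{s_1}$ in a uniform neighborhood of $\pa\Om$. The main obstacle is the subsolution estimate: the lack of scale invariance prevents a simple rescaling of a single-power barrier, so the $p$- and $q$-pieces must be balanced by hand using the sharp asymptotics of Lemma \ref{lemA2}; it is precisely at the resonant value $s_1=q's_2$, where $(-\De)_q^{s_2}(d^{s_1})$ acquires a logarithmic rather than power-type singularity, that this balancing breaks down, which is why the hypothesis is imposed in the statement.
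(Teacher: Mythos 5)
Your proposal identifies the right high-level shape (strong maximum principle + a $d^{s_1}$-shaped barrier + weak comparison on a boundary strip), but the mechanism you describe for obtaining the subsolution inequality $(-\De)_p^{s_1}\psi+(-\De)_q^{s_2}\psi\leq 0$ in $\Om_\varrho$ is not correct, and it conceals a genuine missing idea.

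First, you misread what Lemma~\ref{lemA2} gives. With $\al=s_1$, $\ka=0$, the lemma asserts precisely that $(-\De)_q^{s_2}d^{s_1}=h$ for some $h\in L^\infty(\Om_{\varrho_3})$: that is, the $q$-contribution is merely \emph{bounded}. There is no ``leading power $d^{s_1(q-1)-qs_2}$ with a definite negative sign that blows up as $d\to 0$'': the role of the hypothesis $s_1\neq q's_2$ is exactly to rule out the logarithmic divergence and keep this quantity in $L^\infty$, not to furnish a strongly negative singular term. Likewise $(-\De)_p^{s_1}d^{s_1}$ is bounded (Iannizzotto--Mosconi--Squassina), with no definite sign. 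So after rescaling by $\e$, you only know $(-\De)_p^{s_1}(\e d^{s_1})+(-\De)_q^{s_2}(\e d^{s_1})=\e^{p-1}\tl g+\e^{q-1}g$ with $\tl g,g\in L^\infty$; making $\e$ small does not make this nonpositive. There is simply no dominating negative term in your construction.

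Second, the truncation you impose makes things worse, not better. For $x\in\Om_\varrho$, replacing $\e d(y)^{s_1}$ by the smaller value $\e\varrho^{s_1}$ for $y\in\Om\setminus\Om_\varrho$ increases the differences $\psi(x)-\psi(y)$, hence increases both $(-\De)_p^{s_1}\psi(x)$ and $(-\De)_q^{s_2}\psi(x)$ (monotonicity of $t\mapsto[t]^{p-1}$). So your $\psi$ has $(-\De)_p^{s_1}\psi+(-\De)_q^{s_2}\psi$ at least as large as that of $\e d^{s_1}$, which, per the previous point, need not be nonpositive.

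The missing idea is that the barrier must be pushed \emph{up}, not flattened down, away from the boundary. The paper takes $w=d^{s_1}+\eta\chi_B$ for a closed set $B\Subset\Om\setminus\Om_{\varrho_4}$ and computes (via \cite[Lemma 2.5]{DDS}) that $(-\De)_p^{s_1}w=(-\De)_p^{s_1}d^{s_1}+h_{\eta,p}$ and $(-\De)_q^{s_2}w=(-\De)_q^{s_2}d^{s_1}+h_{\eta,q}$, where $h_{\eta,p},h_{\eta,q}\to-\infty$ uniformly in $\Om_{\varrho_4}$ as $\eta\to\infty$ (because the indicator bump makes the cross-terms $[d^{s_1}(x)-d^{s_1}(y)-\eta]^{p-1}$ strongly negative). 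Choosing $\eta$ large makes $\tl g+h_{\eta,p}\leq 0$ and $g+h_{\eta,q}\leq 0$, so $v=cw$ is a subsolution for every $c\in(0,1)$, with the homogeneities factoring out cleanly term by term. One then only needs $c$ small enough to ensure $cw\leq u$ in $\mb R^N\setminus\Om_{\varrho_4}$, and weak comparison on $\Om_{\varrho_4}$ yields $cd^{s_1}\leq cw\leq u$ there. That bump on an interior compact set, together with the fact that both fractional Laplacians of $d^{s_1}$ are bounded (which is all $s_1\neq q's_2$ buys you, via Lemma~\ref{lemA2}), is the heart of the proof and is absent from your proposal.
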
 

From above, we derive:
\begin{Theorem}[Strong Comparison principle]\label{strongcompreg}
 Let $u,v\in W^{s_1,p}_0(\Om)\cap C(\ov\Om)$ be such that $0<v\leq u$ in $\Om$ with $u\not\equiv v$. Further, assume that for some $K, K_1>0$, the following holds:
	\begin{equation*}
	 \left\{\begin{array}{rllll}
		(-\De)_p^{s_1}v +(-\De)_q^{s_2}v &\leq (-\De)_p^{s_1}u +(-\De)_q^{s_2}u \leq K,  \\
		-K_1 &\leq (-\De)_q^{s_2} v,
		\end{array}
		\right.
		\quad\mbox{weakly in }\Om.
	\end{equation*}
 Then $u>v$ in $\Om$. Moreover, for $s_1\neq q's_2$, $\frac{u-v}{d^{s_1}}\geq C>0$ in $\Om$.
\end{Theorem}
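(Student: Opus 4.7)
The plan is to split the proof into two parts: first establish the strict comparison $u > v$ in $\Om$, and then upgrade it to the quantitative Hopf-type bound $(u-v)/d^{s_1} \geq C$ under the restriction $s_1 \neq q's_2$. Set $w := u - v$; then $w \in W^{s_1,p}_0(\Om) \cap C(\ov\Om)$, $w \geq 0$, $w = 0$ on $\mb R^N\setminus\Om$, and $w \not\equiv 0$. Note that from the hypotheses one also obtains $(-\De)_p^{s_1}v \leq K + K_1$ by combining the chain $(-\De)_p^{s_1}v+(-\De)_q^{s_2}v \leq K$ with $-K_1 \leq (-\De)_q^{s_2}v$, so both $u$ and $v$ are H\"older continuous up to $\pa\Om$ by Theorems \ref{impintreg} and \ref{bdryreg}.

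For Part 1 I would argue by contradiction: if $w(x_0) = 0$ for some $x_0 \in \Om$, then by continuity and $w \not\equiv 0$ there exists a ball $B_\rho(y_0) \Subset \Om$ on which $w \geq c_0 > 0$. Testing the hypothesis with any $\phi \in C_c^\infty(\Om)$, $\phi \geq 0$, one has
\[
A_p(u,\phi,\mb R^{2N}) - A_p(v,\phi,\mb R^{2N}) + A_q(u,\phi,\mb R^{2N}) - A_q(v,\phi,\mb R^{2N}) \geq 0 .
\]
Exploiting the antisymmetry of $[\cdot]^{p-1}$ under $x\leftrightarrow y$ and the symmetry of the kernel, this is equivalent to $\int \phi(x) G(x)\,dx \geq 0$ where
\[
G(x) = 2\int_{\mb R^N} \frac{[u(x)-u(y)]^{p-1} - [v(x)-v(y)]^{p-1}}{|x-y|^{N+s_1p}}\,dy + 2\int_{\mb R^N} \frac{[u(x)-u(y)]^{q-1} - [v(x)-v(y)]^{q-1}}{|x-y|^{N+s_2q}}\,dy,
\]
interpreted in the principal-value sense. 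Localizing $\phi$ around $x_0$ by mollification and using the H\"older regularity of $u, v$ to separate and pass to the limit in the near-diagonal and the far-field parts, one reduces the inequality to $G(x_0) \geq 0$. Since $u(x_0) = v(x_0)$, the strict monotonicity of $t \mapsto [t]^{p-1}$ combined with $u \geq v$ globally and $u > v$ on $B_\rho(y_0)$ forces each integrand at $x_0$ to be $\leq 0$, with strict inequality on $B_\rho(y_0)$, giving $G(x_0) < 0$, a contradiction.

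For Part 2 I would linearize and adapt the barrier argument from Proposition \ref{hopf}. Using the mean-value identity
\[
[u(x)-u(y)]^{p-1} - [v(x)-v(y)]^{p-1} = M_p(x,y)\,(w(x)-w(y)),\quad M_p(x,y) = (p-1)\!\int_0^1\!|(1-t)(v(x)-v(y)) + t(u(x)-u(y))|^{p-2}\,dt \geq 0,
\]
and the analogous $M_q$, one sees that $w$ is a weak super-solution of a linear nonlocal operator $\mc L_p + \mc L_q$ with non-negative kernels $M_p/|x-y|^{N+s_1p}$ and $M_q/|x-y|^{N+s_2q}$. In a boundary strip $\Om_\varrho$ I would take the barrier $\psi := \eta\, d^{s_1}$ for small $\eta > 0$: using Lemma \ref{lemA2} to control $(-\De)_q^{s_2}(d^{s_1})$ (where the assumption $s_1 \neq q's_2$ is exactly what rules out the borderline behavior) together with the known estimates on $(-\De)_p^{s_1}(d^{s_1})$, one verifies that $\psi$ is a weak sub-solution of $\mc L_p + \mc L_q$ in $\Om_\varrho$. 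The strict interior positivity from Part 1 together with $w=\psi=0$ on $\pa\Om$ allows $\psi \leq w$ on $\pa\Om_\varrho$ upon shrinking $\eta$, and the weak comparison principle for the linearized operator yields $w \geq \eta\, d^{s_1}$ in $\Om_\varrho$, which combined with $w \geq c > 0$ on compact subsets of $\Om$ gives the stated global bound.

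The hard part is Part 2: the linearized kernels $M_p, M_q$ are not uniformly comparable to standard singular kernels and in fact degenerate on the set where $u$ and $v$ agree at pairs $(x,y)$, so the comparison principle needs to be applied carefully, using the strict positivity of $w$ inside $\Om$ established in Part 1. The verification that $\psi = \eta\, d^{s_1}$ is a sub-solution of the linearized operator relies on the precise asymptotics of $(-\De)_q^{s_2}(d^{s_1})$ at the boundary developed in Lemma \ref{lemA2}, and this is exactly the place where the restriction $s_1 \neq q's_2$ is essential. In Part 1, the subtle point is justifying the principal-value interpretation of $G$ and its continuity at $x_0$ when the available interior H\"older exponent can fall short of $s_1p/(p-1)$, which requires splitting the test integral into near-diagonal and far-field pieces and passing to the limit separately.
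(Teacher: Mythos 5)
Your proposal takes a genuinely different route from the paper's. The paper's argument (following \cite[Theorem 2.7]{iannmospap}) is a global multiplicative perturbation: one locates a ball $B_\rho(x_0)\Subset\Om$ on which $u>v+\e/2$, compares $u$ in $\Om\setminus B_\rho(x_0)$ with the function $w_\Ga$ equal to $\Ga v$ outside $B_{\rho/2}(x_0)$ and to $u$ inside, with $\Ga>1$ close to $1$, and shows via \cite[Lemma 2.5]{DDS} and the weak comparison principle that $u\geq\Ga v>v$; the Hopf bound then follows from Proposition \ref{hopf}. The hypotheses $K$ and $K_1$ enter precisely to absorb the error $(\Ga^{p-1}-1)K+(\Ga^{p-1}-\Ga^{q-1})K_1$ produced by scaling $v$ by $\Ga$ under the non-homogeneous operator, with the negative contribution from the modification inside $B_{\rho/2}(x_0)$ dominating for $\Ga$ near $1$. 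Your proof never actually uses $K$ and $K_1$, which should be a warning sign: strong comparison for nonlinear nonlocal operators is notoriously delicate, and all cited results (Jarohs, Iannizzotto--Mosconi--Papageorgiou) require quantitative hypotheses of this type.

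There are two concrete gaps. In Part 1, the contradiction rests on evaluating the principal-value quantity $G$ at the touching point $x_0$, but the available interior regularity is only $C^{0,\sg}$ for $\sg<\min\{1,\tfrac{ps_1}{p-1}\}$, so the near-diagonal integrand behaves like $|x_0-y|^{(p-1)\sg-N-ps_1}$ with $(p-1)\sg<ps_1$, which is not absolutely integrable, and for $p\neq 2$ there is no odd-symmetry cancellation without differentiability of $u,v$. You flag the issue but the claimed reduction to $G(x_0)\geq 0$ is precisely the unproved step, and it would fail with the regularity at hand. In Part 2, the barrier verification is not justified: Lemma \ref{lemA2} and Theorem \ref{sup} describe the nonlinear operators $(-\De)_q^{s_2}$ and $(-\De)_p^{s_1}$ acting on $d^{s_1}$, whereas you need the linearized operators $\mc L_p,\mc L_q$, whose kernels carry the weights $M_p(x,y)$, $M_q(x,y)$ depending nonuniformly on $u$ and $v$ and degenerating on the diagonal set; there is no mechanism to transfer the nonlinear asymptotics to the linearized ones. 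In addition, no weak comparison principle for such degenerate linear nonlocal operators is established in the paper, so even if the barrier computation were carried through, the final comparison step would remain unsupported. The paper's construction avoids all of these issues by never linearizing and never invoking a pointwise representation of the operator.
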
 
 \begin{Remark}
	For the case $s_1= q's_2$, we can prove the similar result as  Proposition \ref{hopf} and Theorem \ref{strongcompreg}, but with $d^\alpha$ instead of $d^{s_1}$, where $\alpha\geq s_1$ is such that $\alpha\neq q's_2$ and $\alpha\neq p's_1$. Moreover, for the case $s_1\neq q's_2$, the exponent $s_1$ in the function $d^{s_1}$ is optimal.
\end{Remark}

Now, we turn to the case of singular nonlinearity.  We consider the following problem:  
\begin{equation*}
	\left\{\begin{array}{rllll}
		(-\Delta)^{s_1}_{p}u+(-\Delta)^{s_2}_{q}u & = K_{\ga}(x)u^{-\de}, \; \ u>0 \; \text{ in } \Om, \\ 
		u&=0 \quad \text{in } \mathbb{R}^N\setminus \Om
	\end{array}
	\right. \tag{$\mc S_{\ga,\de}$}\label{probsing}
\end{equation*}
where $\de>0$, $\ga\in [0,ps_1)$ and $K_\ga\in L^\infty_{\mathrm{loc}}(\Om)$ is such that 
\begin{align}\label{kermain} 
 \mc C_1 d(x)^{-\ga} \leq K_\ga(x)\leq \mc C_2 d(x)^{-\ga} \quad\mbox{in }\Om, 
\end{align}
with $\mc C_1, \mc C_2$ are positive constants. We have the following notion of solutions:
\begin{Definition}\label{defn1}
 A function $u\in W^{s_1,p}_{\rm loc}(\Om)$ is said to be a weak solution to problem \eqref{probsing} if there exists $\theta\geq 1$ such that $u^\theta\in W^{s_1,p}_0(\Om)$, $\inf_D u >0$ for all $D\Subset\Om$, and \eqref{solndef} holds for all $\phi\in C_c^\infty(\Om)$ with $f(x):= K_\ga(x)u^{-\de}(x)$, there.
\end{Definition}
\begin{Remark}\label{remequivsoln}
 We note that a solution $u\in W_{\rm loc}^{s_1,p}(\Om)$ of \eqref{probsing} in the sense of definition \ref{defn1} is in fact a local solution in the sense of definition \ref{defnlocal}.
\end{Remark}

We prove in the singular setting the following almost optimal global regularity results:
 \begin{Theorem}[Boundary regularity]\label{bdryregsing}
  Let $2\leq q\leq p<\infty$, $\ga\in[0,ps_1)$ and $\de>0$. Let $v\in W^{s_1,p}_{\rm loc}(\Om)$ be the minimal solution to problem \eqref{probsing}. The following holds: 
  \begin{itemize}
  	\item[(i)] if $\ga-s_1(1-\de)\leq 0$, then $v\in C^{0,\sg}(\ov\Om)$, for all $\sg<s_1$. 
  	\item[(ii)] If $\ga-s_1(1-\de)> 0$ with $\ga\neq ps_1-q's_2(p-1+\de)$, then $v\in C^{0,\frac{ps_1-\ga}{p-1+\de}}(\ov\Om)$.
  	\item[(iii)] If $\ga-s_1(1-\de)> 0$ with $\ga= ps_1-q's_2(p-1+\de)$, then $v\in C^{0,\frac{ps_1-\ga_1}{p-1+\de}}(\ov\Om)$, for all $\ga_1\in (\ga,ps_1)$.
  \end{itemize}
 \end{Theorem}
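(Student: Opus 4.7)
My plan is to prove Theorem \ref{bdryregsing} by first establishing sharp pointwise boundary estimates for the minimal solution $v$ via barrier arguments, and then combining these with the interior H\"older regularity of Theorem \ref{impintreg} to upgrade them to global H\"older continuity. The natural boundary exponent emerges from the ansatz $u \sim d^\ba$: plugging this into \eqref{probsing}, the dominant term $(-\De)^{s_1}_p d^\ba$ has order $d^{\ba(p-1)-ps_1}$ near $\pa\Om$ while the singular right-hand side $K_\ga u^{-\de}$ has order $d^{-\ga-\ba\de}$, and matching the two yields $\ba = (ps_1-\ga)/(p-1+\de)$. This exponent is $\geq s_1$ precisely when $\ga - s_1(1-\de) \leq 0$ (case (i)), in which case the boundary regularity is capped at $s_1$ since $v$ vanishes outside $\Om$; in cases (ii) and (iii), $\ba < s_1$ is the expected boundary behavior.

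I would then construct sub- and super-solutions of the form $C\, d^\ba$ (or $C\, d^\sg$ for $\sg < s_1$ in case (i)) in a tubular neighborhood $\Om_\varrho$. The estimate $(-\De)^{s_1}_p d^\ba \sim d^{\ba(p-1)-ps_1}$ near $\pa\Om$ is classical, and the behavior of $(-\De)^{s_2}_q d^\ba$ follows from (a generalization of) the technical Lemma \ref{lemA2}, which yields the pointwise asymptotics provided $\ba(q-1) \neq qs_2$. To transfer the inequalities to the minimal solution, I would apply the weak comparison principle first to the approximating solutions $u_\vep$ of the regularized problem \eqref{probsingeps} against the barriers $C_1 d^\ba$ and $C_2 d^\ba$ in $\Om_\varrho$, and then pass to the limit $\vep \to 0$, using the uniqueness of the minimal solution (Definition \ref{defn1}), to transfer the two-sided bounds to $v$. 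Combined with the interior H\"older bounds from Theorem \ref{impintreg} (fed by the local boundedness of Proposition \ref{localbdd}), a standard scaling/chaining argument on balls $B_{d(x)/2}(x)$ upgrades these boundary decay estimates to global H\"older continuity with the desired exponent.

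The main obstacle is the critical case (iii), where the condition $\ga = ps_1 - q's_2(p-1+\de)$ is precisely equivalent to $\ba(q-1) = qs_2$, i.e., $\ba = q's_2$. This is exactly the threshold at which $(-\De)^{s_2}_q d^\ba$ loses its pure-power asymptotic behavior (a logarithmic correction typically appears), so the naive barrier construction breaks down. I would circumvent this by perturbing $\ga$ slightly to $\ga_1 \in (\ga, ps_1)$; the corresponding exponent $\ba' = (ps_1-\ga_1)/(p-1+\de) < \ba$ then satisfies $\ba'(q-1) \neq qs_2$, so the barrier construction above applies and yields H\"older regularity with exponent $\ba'$, which is exactly the slightly weaker estimate recorded in (iii). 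An analogous care is needed in case (i) when $\sg$ approaches $s_1$ and happens to hit the relation $\sg(q-1) = qs_2$, which is avoided by taking $\sg$ slightly below $s_1$ and possibly off this threshold; this is why case (i) only gives $v \in C^{0,\sg}(\ov\Om)$ for all $\sg < s_1$ rather than $\sg = s_1$.
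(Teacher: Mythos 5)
Your proposal matches the paper's argument. The boundary decay $v\leq\Ga d^{\mu}$ is precisely Theorem \ref{uppdist}, proved as you describe via power-of-distance barriers (the paper uses $\ov w_{\rho}=(d_e+\e^{1/\al})_+^{\al}$ with $\al=\ba$, fed into Theorem \ref{sup} for $(-\De)_p^{s_1}$ and Lemma \ref{lemA2} for $(-\De)_q^{s_2}$), compared against the regularized solutions $v_\e$ of \eqref{probsingeps} via the weak comparison principle and then sending $\e\to 0$; the threshold case (iii) is handled exactly by perturbing $\ga$ to $\ga_1>\ga$ so as to avoid the relation $\ba=q's_2$ that breaks Lemma \ref{lemA2}, and the combination with the interior estimate of Theorem \ref{impintreg} goes through the standard chaining argument of Iannizzotto--Mosconi--Squassina. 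One small caveat: you propose $C d^{\ba}$ as a sub-solution, but Theorem \ref{sup} gives only a lower bound on $(-\De)_p^{s_1} d^{\al}$, not an upper bound, so one cannot verify the sub-solution inequality from the stated results; the paper instead obtains the lower estimate $v\geq\eta d^{s_1}$ by comparing with the auxiliary solution $w_\vartheta$ of \eqref{probsub} and invoking the Hopf lemma (Proposition \ref{hopf}). Since only the upper decay estimate is used in the chaining argument that upgrades interior H\"older continuity to global H\"older continuity, this does not affect the validity of the theorem, but the sub-barrier step as written would not be justified.
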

For critical growth problems, we deduce:
\begin{Corollary}\label{corcriticalsing}
 Let $u\in W^{s_1,p}_0(\Om)$ be a weak solution to problem:
 \begin{equation*}
  \left\{
 	\begin{array}{rl}  
 		(-\Delta)^{s_1}_{p}u+  (-\Delta)^{s_2}_{q}u &= \la u^{-\de}+ b(x,u), \quad 
 		u > 0 \quad \mbox{in } \Om,\\
 		u &=0\quad \text{in } \mb R^N\setminus \Om,
 	\end{array}
 	\right. \tag{$\mc Q_\la$} \label{probsingcrit}
 \end{equation*}
 where $\de\in (0,1)$, $\la>0$ is a parameter and $b:\Om\times\mb R\to\mb R$ is a Carath\'eodory function satisfying $|b(x,t)|\leq C_b (1+|t|^{p^*_{s_1}-1})$ with $C_b>0$ as a constant.  Then, $u\in C^{0,\sg}(\ov\Om)$, for all $\sg\in (0,s_1)$.
\end{Corollary}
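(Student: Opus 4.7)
The strategy is to reduce problem \eqref{probsingcrit} to the pure singular problem \eqref{probsing} with weight exponent $\gamma=0$, so that Theorem \ref{bdryregsing}(i) applies directly. Observe that for $\gamma=0$ the condition $\gamma-s_1(1-\de)\leq 0$ reduces to $\de\leq 1$, which holds by hypothesis; the conclusion of Theorem \ref{bdryregsing}(i) is exactly the claimed $C^{0,\sg}(\ov\Om)$ regularity for every $\sg<s_1$.

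First I would establish $u\in L^\infty(\Om)$. Starting from $u\in W^{s_1,p}_0(\Om)\hookrightarrow L^{p^*_{s_1}}(\Om)$, I would run a Moser-type iteration built on the Caccioppoli inequality (Lemma \ref{lemcaccp}), testing with powers of $u$ truncated away from the set $\{u=0\}$ so that the singular term $\la u^{-\de}$ stays integrable (noting $u^{-\de}\in L^{r}$ for some large $r$ when $\de<1$ and $u$ is already controlled on any level set $\{u>\vep\}$). The critical growth term $b(x,u)$ is absorbed by the standard truncation trick: writing $|b(x,t)|\leq C_b(1+|t|^{p^*_{s_1}-1})$ and separating the high- and low-energy parts of $u$ in $L^{p^*_{s_1}}$, the high-energy part can be made small, while the low-energy part is already $L^\infty$-controlled; the iteration then closes. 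Once $\|u\|_{L^\infty(\Om)}\leq M$, Theorem \ref{impintreg} gives interior Hölder continuity on any $D\Subset\Om$, and Theorem \ref{strongmax} (applied to $u$ as a supersolution of the homogeneous operator, which is valid since the RHS is non-negative wherever $u$ is small) yields $u>0$ in $\Om$ and $\inf_D u>0$ for every $D\Subset\Om$.

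Next, set
\[
\widetilde K(x) := \la + u(x)^{\de}\,b(x,u(x)), \qquad x\in\Om,
\]
so that the right-hand side of \eqref{probsingcrit} can be recast as $\widetilde K(x)\,u(x)^{-\de}$. Because $u\in L^\infty(\Om)$ and $b$ is continuous with subcritical-type bound, $\widetilde K\in L^\infty(\Om)$. Since $u(x)\to 0$ as $d(x)\to 0$ (easy upper barrier at the boundary via a comparison with $C d^{s_1}$ in the spirit of Proposition \ref{upper}), we have $\widetilde K(x)\to\la>0$ near $\pa\Om$; hence there exists $\varrho>0$ with $\la/2\leq \widetilde K\leq 2(\la+C_b M^{\de}(1+M^{p^*_{s_1}-1}))$ on $\Om_\varrho$, so $\widetilde K$ satisfies the structural bound \eqref{kermain} with $\gamma=0$ on $\Om_\varrho$. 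On $\Om\setminus\Om_\varrho$ the RHS is bounded and Theorem \ref{impintreg} already furnishes the desired interior Hölder regularity. For the behavior up to the boundary, I would identify $u$ with a minimal-type solution of the $(\mc S_{0,\de})$-problem with weight $\widetilde K$ (frozen at $u$) via a monotone iteration comparison argument anchored on Theorem \ref{strongcompreg}, and then invoke Theorem \ref{bdryregsing}(i) to conclude $u\in C^{0,\sg}(\ov\Om)$ for every $\sg\in(0,s_1)$.

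The main obstacle is the $L^\infty$ bound: the Moser iteration must simultaneously accommodate the singular term $\la u^{-\de}$ and the critical-growth term $b(x,u)$, which is delicate because the standard test-function choices fail at the singular set. A secondary technical point is that Theorem \ref{bdryregsing} is formulated for the minimal solution of \eqref{probsing} with a prescribed weight $K_\ga$, whereas here the effective weight $\widetilde K$ depends on $u$ itself; bridging this gap via a fixed-point/comparison argument that respects Theorem \ref{strongcompreg} is the key piece of bookkeeping to get right. Once these two points are handled, the reduction to Theorem \ref{bdryregsing}(i) is purely mechanical.
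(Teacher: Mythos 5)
Your high-level plan (establish $u\in L^\infty$, then reduce to a singular problem with $\gamma=0$ and exploit the boundary analysis of Section 4) is the right one, and the $L^\infty$ step is in the same spirit as the paper's (which tests with $g_\ka(t)=(t-1)_+(\llcorner (t-1)_+\rrcorner_m)^\ka$, so the test function kills the region $\{u\le 1\}$ and the singular term is automatically controlled there). However, the reduction step has a genuine gap.

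You set $\widetilde K(x) = \la + u(x)^\de b(x,u(x))$ and then try to ``identify $u$ with a minimal-type solution of the $(\mc S_{0,\de})$-problem with weight $\widetilde K$'' so that Theorem~\ref{bdryregsing}(i) can be invoked. This does not go through for two reasons. First, \eqref{kermain} with $\ga=0$ requires $\widetilde K$ to be bounded \emph{below} by a positive constant throughout $\Om$; but $b$ may be negative, so $\widetilde K$ can vanish or change sign in the interior, and the structural bound fails. Pushing it to a boundary strip $\Om_\varrho$ also presupposes $u(x)\to 0$ as $d(x)\to 0$, which you justify via Proposition~\ref{upper}; that proposition requires a two-sided $L^\infty$ bound on the right-hand side, which fails here precisely because $\la u^{-\de}$ blows up where $u$ is small — this is circular. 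Second, and more fundamentally, even if $\widetilde K$ were admissible, there is no argument given that $u$ coincides with the \emph{minimal} solution of the $\widetilde K$-problem; ``a monotone iteration comparison argument anchored on Theorem~\ref{strongcompreg}'' is the wrong tool (Theorem~\ref{strongcompreg} is a strong comparison principle requiring bounds on the individual operator actions, not a fixed-point or identification device), and the identification is never actually established.

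The paper's route is simpler and avoids all of this. Once $u\in L^\infty(\Om)$, it bounds the right-hand side from \emph{above} by a constant times $u^{-\de}$:
\begin{align*}
(-\Delta)^{s_1}_p u + (-\Delta)^{s_2}_q u = \la u^{-\de} + b(x,u) \le C_b\big(\la + \|u\|_{L^\infty(\Om)}^{\de} + \|u\|_{L^\infty(\Om)}^{p^*_{s_1}-1+\de}\big)u^{-\de} =: \la_* u^{-\de}.
\end{align*}
Thus $u$ is a \emph{subsolution} of $(\mc S_{0,\de})$ with the constant weight $\la_*$. Let $v$ be the minimal solution of that problem; by the weak comparison principle for singular problems (Proposition~\ref{prop4}, applicable since $\ga=0<1+s_1-1/p$), $u\le v$ in $\Om$, and Theorem~\ref{uppdist} already provides $v\le \Ga d^{s_1-\sg}$. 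This one-sided comparison is all that is needed — no identification of $u$ with any minimal solution, and no lower structural bound on an $x$-dependent weight. The final step then combines this boundary decay with the interior estimate of Theorem~\ref{impintreg}, exactly as you describe. The lesson: you only need an upper barrier for $u$, and bounding the right-hand side by $\la_* u^{-\de}$ with $\la_*$ a constant hands you one directly via Proposition~\ref{prop4} and Theorem~\ref{uppdist}.
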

%

Next, we establish a version of the strong comparison principle (weaker than theorem \ref{strongcompreg}) for problems involving singular nonlinearities.
\begin{Theorem}\label{strongcomp}
 Let $2\leq q\leq p<\infty$ and $g\in L^\infty(\Om)$ be a non-negative function. Let $v,w\in W^{s_1,p}_0(\Om)\cap L^\infty(\Om)$ satisfy the following:
	\begin{align*}
	 (-\Delta)^{s_1}_{p}v+  (-\Delta)^{s_2}_{q}v \ge v^{-\de}+ g(x), \quad\mbox{and }
	 (-\Delta)^{s_1}_{p}w+  (-\Delta)^{s_2}_{q}w \le w^{-\de}+ g(x). 
	\end{align*} 
  Furthermore, assume either $v$ or $w$ is in $C^{0,\al}_{\mathrm{loc}}(\Om)$ with $\al>\max\{ \frac{ps_1-1}{p-2}, \frac{qs_2-1}{q-2} \}$ and there exists $\eta>0$ such that $v\geq \eta d^{s_1}$ in $\Om$. Then, either $v=w$ in $\mb R^N$ or 
	\[ \mathrm{inf}_K(v-w)>0 \quad \mbox{for all }K\Subset\Om. \]	
\end{Theorem}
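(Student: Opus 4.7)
The plan is to combine a weak comparison argument with a strong maximum principle applied to the linearization of the difference $z := v - w$.

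\textbf{Weak comparison and continuity.} I would subtract the two weak inequalities and test with $\phi := (w-v)_+ \in W^{s_1,p}_0(\Om)$. The right-hand side $\int_\Om (v^{-\de} - w^{-\de})\phi\, dx$ is non-negative since $t \mapsto -t^{-\de}$ is increasing on $(0,\infty)$, and strictly positive unless $|\{w > v\}| = 0$. The left-hand side is non-positive by the standard monotonicity of the maps $[\cdot]^{p-1}, [\cdot]^{q-1}$, after splitting $\mb R^N \times \mb R^N$ into the four regions determined by $\{w > v\}$ and $\{v \ge w\}$. Hence $v \ge w$ in $\mb R^N$. Then $v \ge \eta d^{s_1}$, $v \ge w$, and the subsolution inequality on $w$ yield that both $v^{-\de}$ and $w^{-\de}$ are locally bounded on $\Om$, so Theorem \ref{impintreg} combined with the assumed $C^{0,\al}_{\mathrm{loc}}$ regularity of one of $v, w$ gives that $z$ is continuous on $\Om$ with $z \ge 0$.

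\textbf{Linearization.} Using the mean-value identity $[a]^{p-1} - [b]^{p-1} = (p-1)(a-b)\int_0^1 |b + \tau(a-b)|^{p-2}\, d\tau$, I would write
\begin{align*}
(-\De)^{s_1}_p v(x) - (-\De)^{s_1}_p w(x) = \mathrm{P.V.}\int_{\mb R^N} \frac{\mc K_p(x,y)\bigl(z(x) - z(y)\bigr)}{|x-y|^{N+ps_1}}\, dy,
\end{align*}
with the symmetric non-negative kernel
\begin{align*}
\mc K_p(x,y) := (p-1) \int_0^1 \bigl|w(x) - w(y) + \tau(z(x) - z(y))\bigr|^{p-2}\, d\tau,
\end{align*}
and similarly express $(-\De)^{s_2}_q v - (-\De)^{s_2}_q w$ in terms of a kernel $\mc K_q$. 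The hypothesis $\al > \max\{(ps_1-1)/(p-2), (qs_2-1)/(q-2)\}$ on one of $v, w$, combined with the $L^\infty$ bound on the other, yields the pointwise bounds $\mc K_p(x,y) \le C|x-y|^{\al(p-2)}$ and $\mc K_q(x,y) \le C|x-y|^{\al(q-2)}$, making the rescaled kernels integrable at the diagonal of fractional order strictly less than $1$. The induced linearized operators $\mc L_p z$ and $\mc L_q z$ are therefore pointwise well-defined in $\Om$, and $z$ satisfies
\begin{align*}
\mc L_p z + \mc L_q z \ge v^{-\de} - w^{-\de} \quad \mbox{in } \Om.
\end{align*}

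\textbf{Strong maximum principle.} Assume $v \not\equiv w$ and suppose, for contradiction, that $z(x_0) = 0$ for some $x_0 \in \Om$. Then $v(x_0) = w(x_0)$, so $v^{-\de}(x_0) - w^{-\de}(x_0) = 0$, and since $z \equiv 0$ on $\mb R^N \setminus \Om$ and $z \ge 0$ everywhere, $x_0$ is a global minimum of $z$. Evaluating pointwise at $x_0$ gives
\begin{align*}
0 \le (\mc L_p z + \mc L_q z)(x_0) = -\int_{\mb R^N} \Bigl(\frac{\mc K_p(x_0,y)}{|x_0-y|^{N+ps_1}} + \frac{\mc K_q(x_0,y)}{|x_0-y|^{N+qs_2}}\Bigr) z(y)\, dy.
\end{align*}
The integrand is non-positive everywhere; on $\{z > 0\}$, which has positive measure by continuity and $z \not\equiv 0$, the kernel $\mc K_p(x_0, y)$ is strictly positive, because its vanishing would force the linear map $\tau \mapsto w(x_0) - w(y) - \tau z(y)$ to be identically zero on $[0,1]$, requiring $z(y) = 0$. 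Hence the integral is strictly positive, yielding a contradiction. Therefore $z > 0$ in $\Om$, and continuity of $z$ yields $\inf_K z > 0$ for every $K \Subset \Om$.

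The main obstacle is making the pointwise evaluation of $\mc L_p z + \mc L_q z$ at the interior minimum $x_0$ rigorous under the mixed regularity hypothesis (only one of $v, w$ is assumed $C^{0,\al}_{\mathrm{loc}}$). The threshold $\al > \max\{(ps_1-1)/(p-2), (qs_2-1)/(q-2)\}$ is tight: it is precisely what ensures that the linearized kernels correspond to genuine nonlocal operators of order $< 1$, and a careful bootstrap using Theorem \ref{impintreg} on the companion function is needed to justify both the kernel estimates and the pointwise argument at $x_0$.
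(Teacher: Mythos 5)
Your approach is structurally different from the paper's. The paper follows Jarohs's barrier/perturbation method verbatim: given a set $D$ with $l:=\inf_D(v-w)>0$, one fixes $K\Subset\Om\setminus D$, a bump $f\in C_c^2(\Om\setminus D)$ with $f\equiv 1$ on $K$, and considers $u_a=v-af-l\chi_D$. Jarohs's Lemma~3.6 gives $a_0,b>0$ (depending only on the $C^{0,\al}$ regularity of the perturbed function $v$) with $(-\De)_p^{s_1}u_a+(-\De)_q^{s_2}u_a\geq (-\De)_p^{s_1}v+(-\De)_q^{s_2}v+b$ weakly on $\mathrm{supp}\,f$ for $a\in(0,a_0]$; the uniform gain $b$ absorbs the singular term discrepancy for $a$ small, and the weak comparison principle between $u_a$ and $w$ on $\mathrm{supp}\,f$ — which requires no pointwise regularity of $w$ — then yields $v\geq w+a$ on $K$. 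In contrast, you linearize the difference of operators and run a pointwise strong maximum principle for $z=v-w$ at an interior minimum.

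The gap in your argument is precisely the one you flag at the end, and it is not fixable by the bootstrap you suggest. Your kernel bound $\mc K_p(x,y)\leq C|x-y|^{\al(p-2)}$ requires both $v(x)-v(y)$ and $w(x)-w(y)$ to be $O(|x-y|^\al)$, since $\mc K_p(x,y)=(p-1)\int_0^1|(1-\tau)(w(x)-w(y))+\tau(v(x)-v(y))|^{p-2}\,d\tau$. If only one of them is $\al$-Hölder and the other is merely $L^\infty$, the integrand near $\tau=0$ (say) does not decay as $|x-y|\to 0$, and $z=v-w$ is likewise only bounded, so the integral $\int \mc K_p(x_0,y)z(y)/|x_0-y|^{N+ps_1}\,dy$ fails to converge near the diagonal. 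The proposed repair — applying Theorem~\ref{impintreg} to the companion function to upgrade its regularity — does not go through, because that theorem addresses local weak solutions of $(-\De)_p^{s_1}u+(-\De)_q^{s_2}u=f$ with $f\in L^\infty_{\mathrm{loc}}$, i.e.\ a two-sided equation; here the companion function satisfies only the one-sided sub- (or super-) solution inequality, so no two-sided $L^\infty$ bound on $(-\De)_p^{s_1}u+(-\De)_q^{s_2}u$ is available. This is exactly why the paper resorts to Jarohs's perturbation lemma: it only requires regularity of the function being perturbed, and the comparison with the other function is made at the weak level. Your proof would be correct if the hypothesis were strengthened to require $v,w\in C^{0,\al}_{\mathrm{loc}}(\Om)$ both, in which case the reading of the threshold $\al>\max\{(ps_1-1)/(p-2),(qs_2-1)/(q-2)\}$ as ``linearized operator of order $<1$'' is apt; but as stated the theorem needs only one-sided regularity, and that is what the paper's route is designed to handle.
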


 \section{Regularity results for regular problems}
In this section, we study the regularity results for problem \eqref{probM}. First, we prove the interior bounds on the weak solutions. For $f\in L^\infty_{\mathrm{loc}}(\Om)$ and $g\in L^{p-1}_{s_1p}(\mb R^N)\cap L^{q-1}_{s_2q}(\mb R^N)$, we consider the following boundary value problem:
 \begin{equation*}
 	\left\{
 	\begin{array}{rl}
 		(-\Delta)^{s_1}_{p}u+  (-\Delta)^{s_2}_{q}u &= f \quad \text{in }\Om, \\
 		u &=g\quad \text{in} \ \mb R^N\setminus \Om.
 	\end{array}
 	\right. \tag{$\mc G_{f,g}(\Om)$} \label{probG}
 \end{equation*}
 \begin{Definition}\label{defnsolnbdry}
 For $f\in L^\infty_{\mathrm{loc}}(\Om)$ and $g\in L^{p-1}_{s_1p}(\mb R^N)\cap L^{q-1}_{s_2q}(\mb R^N)$, we say that $u\in X^{s_1,p}_{g}(\Om,\Om') \cap X^{s_2,q}_{g}(\Om,\Om')$ is a solution to problem $(\mc G_{f,g}(\Om))$ if  \eqref{solndef} holds for all $\phi\in X^{s_1,p}_{0}(\Om,\Om') \cap X^{s_2,q}_{0}(\Om,\Om')$.
 \end{Definition}
Now, we prove the following non-local Caccioppoli type inequality. 

\begin{Lemma}\label{lemcaccp}
Suppose $1<q\leq p<\infty$. Let $\Om''\Subset\Om'\subset\mb R^N$ and let $u\in X^{s_1,p}_{g}(\Om'',\Om') \cap X^{s_2,q}_{g}(\Om'',\Om')$ be a solution to problem $(\mc G_{f,g}(\Om''))$ with $g\in W^{s_1,p}(\Om')\cap L^{p-1}_{s_1p}(\mb R^N)\cap L^{q-1}_{s_2q}(\mb R^N)$. Set $w_{\pm}(x):=(u(x)-k)_{\pm}$, for $k\in\mb R$. Then, for any $E\Subset\Om''$ and $\psi\in C_c^\infty(E)$ with $0\leq\psi\leq 1$, we have 
 \begin{align}\label{caccioppnew}
 	&\int_{E} \int_{E} \frac{|w_\pm(x)\psi(x)-w_\pm(y)\psi(y)|^p}{|x-y|^{N+s_1p}}~dxdy \nonumber\\ 
 	&\leq C \sum_{(l,s)} \int_{E}\int_{E} \frac{|\psi(x)-\psi(y)|^l}{|x-y|^{N+sl}}(w_\pm(x)^l+w_\pm(y)^l)dxdy \nonumber\\
 	&\quad+ C   \sum_{(l,s)} \Big(\sup_{y\in {\rm supp}(\psi)} \int_{\mb R^N\setminus E} \frac{w_\pm(x)^{l-1}dx}{|x-y|^{N+sl}}\Big) \int_{E}w_\pm\psi^p+ C \int_{\Om''} |f| w_\pm\psi^p,
 \end{align}  
 where $(l,s)\in\{(p,s_1), (q,s_2)\}$ and $C=C(p,q)>0$ is a constant.
\end{Lemma}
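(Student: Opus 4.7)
The plan is to test the weak formulation \eqref{solndef} with $\phi:=w_{\pm}\psi^{p}$, which lies in $X^{s_1,p}_{0}(\Om'',\Om')\cap X^{s_2,q}_{0}(\Om'',\Om')$ because $\mathrm{supp}(\psi)\subset E\Subset\Om''$. This yields
\begin{equation*}
 A_{p}(u,\phi,\mb R^{N}\times\mb R^{N})+A_{q}(u,\phi,\mb R^{N}\times\mb R^{N})=\int_{\Om''}f\,w_{\pm}\psi^{p}\,dx.
\end{equation*}
For each pair $(l,s)\in\{(p,s_{1}),(q,s_{2})\}$, I split $A_{l}$ into the diagonal block on $E\times E$ and an off-diagonal contribution which, after using the symmetry of the kernel and the fact that $\phi\equiv 0$ outside $E$, reduces to $2\int_{E}\int_{\mb R^{N}\setminus E}[u(x)-u(y)]^{l-1}\phi(x)|x-y|^{-N-sl}\,dy\,dx$.

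On the diagonal block I apply the standard pointwise algebraic inequality (of DiCastro--Kuusi--Palatucci type): for $a,b\in\mb R$, $\tau_{1},\tau_{2}\geq 0$ and $l>1$,
\begin{equation*}
 [a-b]^{l-1}\bigl((a-k)_{\pm}\tau_{1}^{l}-(b-k)_{\pm}\tau_{2}^{l}\bigr)\geq \frac{1}{C}\bigl|(a-k)_{\pm}\tau_{1}-(b-k)_{\pm}\tau_{2}\bigr|^{l}-C\bigl((a-k)_{\pm}^{l}+(b-k)_{\pm}^{l}\bigr)|\tau_{1}-\tau_{2}|^{l}.
\end{equation*}
Taking $\tau_{i}=\psi$ with $(l,s)=(p,s_{1})$ recovers, on the left-hand side of the weak formulation, exactly the Gagliardo seminorm of $w_{\pm}\psi$ on $E\times E$ together with an error matching the $(p,s_1)$-contribution to the first sum in \eqref{caccioppnew}. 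The same inequality with $(l,s)=(q,s_{2})$ and $\tau_{i}=\psi^{p/q}$ (together with the elementary bound $|\psi^{p/q}(x)-\psi^{p/q}(y)|\leq(p/q)|\psi(x)-\psi(y)|$, valid since $p\geq q$ and $0\leq\psi\leq 1$) yields a non-negative $q$-seminorm (which I discard) plus the $(q,s_{2})$-contribution to the same first sum.

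For the off-diagonal term I focus on $w_{+}$; the $w_{-}$ case is identical after replacing $u$ by $-u$ and $k$ by $-k$, using the oddness of the operators. The integrand vanishes when $u(x)\leq k$, and has favourable sign when $u(x)>k$ and $u(y)\leq u(x)$. The only delicate case is $u(y)>u(x)\geq k$, for which $-[u(x)-u(y)]^{l-1}\leq (u(y)-u(x))^{l-1}\leq w_{+}(y)^{l-1}$; hence the absolute value of the off-diagonal contribution is dominated by
\begin{equation*}
 C\Bigl(\sup_{y\in\mathrm{supp}(\psi)}\int_{\mb R^{N}\setminus E}\frac{w_{+}(x)^{l-1}}{|x-y|^{N+sl}}\,dx\Bigr)\int_{E}w_{+}\psi^{p}\,dy
\end{equation*}
after swapping the names of $x$ and $y$. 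Combining the two diagonal and two off-diagonal estimates, absorbing the good $W^{s_{1},p}$-seminorm of $w_{\pm}\psi$ on the left-hand side, and bounding the source term by $\int|f|w_{\pm}\psi^{p}$ yields \eqref{caccioppnew}. The main obstacle is precisely the off-diagonal estimate: since $w_{+}$ is not assumed bounded, the pointwise replacement $-[u(x)-u(y)]^{l-1}\leq w_{+}(y)^{l-1}$ and the resulting supremum-in-$y$ tail (rather than a pointwise factor) are essential and dictate the precise form of the right-hand side.
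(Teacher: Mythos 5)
Your proof is correct and follows essentially the same strategy as the paper's: test the weak formulation with $w_\pm\psi^p$, split each of $A_p$ and $A_q$ into the diagonal $E\times E$ block and the off-diagonal block, use the monotonicity $[u(x)-u(y)]^{l-1}(w_\pm(x)\psi(x)^p-w_\pm(y)\psi(y)^p)\ge[w_\pm(x)-w_\pm(y)]^{l-1}(w_\pm(x)\psi(x)^p-w_\pm(y)\psi(y)^p)$ on the diagonal, and bound the off-diagonal term by the pointwise estimate $-[u(x)-u(y)]^{l-1}w_\pm(x)\le w_\pm(y)^{l-1}w_\pm(x)$ leading to the tail supremum. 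The only difference is stylistic: you package the diagonal algebra into a single DiCastro--Kuusi--Palatucci-type inequality with cutoffs $\tau_i=\psi^{p/l}$ (so $\tau_i^l=\psi^p$, and for $l=q$ you control $|\psi^{p/q}(x)-\psi^{p/q}(y)|$ by $(p/q)|\psi(x)-\psi(y)|$ since $p\ge q$ and $0\le\psi\le1$), whereas the paper carries out the symmetric/antisymmetric split of $w_\pm(x)\psi(x)^p-w_\pm(y)\psi(y)^p$ and a Young absorption explicitly, keeping $\psi^p$ throughout and discarding the resulting nonnegative $q$-seminorm, exactly as you do.
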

 \begin{proof}
   Taking  $\phi=w_+\psi^p$ in \eqref{solndef} (observe that $u\mp w_\pm\psi^p\in X^{s_1,p}_{g}(\Om'',\Om') \cap X^{s_2,q}_{g}(\Om'',\Om')$), we have
   \begin{align*}
  	A_p(u,w_+\psi^p,\mb R^N\times \mb R^N)+ A_q(u,w_+\psi^p,\mb R^N\times \mb R^N)=\int_{\Om} f(x) w_+(x)\psi^p(x)dx.
  \end{align*}
 The above expression can be simplified as below:
  \begin{align}\label{eq130}
  	&\sum_{(l,s)} \int_{E}\int_{E} \frac{[u(x)-u(y)]^{l-1} (w_+(x)\psi(x)^p-w_+(y)\psi(y)^p) }{|x-y|^{N+ls}} dxdy \nonumber\\
  	 &+2\sum_{(l,s)} \int_{\mb R^N\setminus E}\int_{E} \frac{[u(x)-u(y)]^{l-1}w_+(x)\psi(x)^p}{|x-y|^{N+ls}} dxdy \nonumber\\  
  	  &=\int_{\Om} f(x) w_+(x)\psi(x)^p dx,
  \end{align}
  where $(l,s)\in \{ (p,s_1), (q,s_2)\}$. To estimate the first integral, without loss of generality, we assume $u(x)\geq u(y)$, consequently
    {\small\begin{align*}
     [u(x)-u(y)]^{l-1}& (w_+(x)\psi(x)^p-w_+(y)\psi(y)^p)=(u(x)-u(y))^{l-1} ((u(x)-k)_+\psi(x)^p-(u(y)-k)_+\psi(y)^p) \\
      &=\begin{cases}
      	(w_+(x)-w_+(y))^{l-1}(w_+(x)\psi(x)^p-w_+(y)\psi(y)^p) \quad\mbox{if }u(x), u(y)>k, \\
      	(u(x)-u(y))^{l-1}w_+(x)\psi(x)^p \quad\mbox{if }u(x)>k, u(y)\leq k, \\
      	0 \quad\mbox{otherwise}.
      \end{cases} 
    \end{align*} }
  Thus, we conclude that 
    {\small\begin{align}\label{eq131}
     [u(x)-u(y)]^{l-1} (w_+(x)\psi(x)^p-w_+(y)\psi(y)^p) \geq [w_+(x)-w_+(y)]^{l-1}(w_+(x)\psi(x)^p-w_+(y)\psi(y)^p).
    \end{align}}
  For the second integral in \eqref{eq130}, we have 
    \begin{align*}
     [u(x)-u(y)]^{l-1}w_+(x)\geq - (u(y)-u(x))^{l-1}(u(x)-k)_+\geq -w_+(y)^{l-1}w_+(x).
    \end{align*}
 This implies that 
  {\small\begin{align}\label{eq132}
  	\int_{\mb R^N\setminus E}\int_{E} \frac{[u(x)-u(y)]^{l-1}w_+(x)\psi(x)^p}{|x-y|^{N+ls}} dxdy &\geq -\int_{\mb R^N\setminus E}\int_{E} \frac{w_+(y)^{l-1}w_+(x)\psi(x)^p}{|x-y|^{N+ls}} dxdy \nonumber\\
  	&\geq -\left[\sup_{x\in {\rm supp}(\psi)} \int_{\mb R^N\setminus E} \frac{w_+(y)^{l-1}dy}{|x-y|^{N+ls}}\right] \int_{E}w_+(x)\psi^p(x)dx.
  \end{align}}
  Therefore, using \eqref{eq131} and \eqref{eq132} in \eqref{eq130}, we infer that 
  \begin{align}\label{eq134}
  	&\sum_{(l,s)} \int_{E}\int_{E} \frac{[w_+(x)-w_+(y)]^{l-1} (w_+(x)\psi(x)^p-w_+(y)\psi(y)^p) }{|x-y|^{N+ls}} dxdy \nonumber\\
  	&\leq 2\sum_{(l,s)}  \Big(\sup_{x\in {\rm supp}(\psi)} \int_{\mb R^N\setminus E} \frac{w_+(y)^{l-1}dy}{|x-y|^{N+ls}}\Big) \int_{E}w_+(x)\psi^p(x)dx
  	+\int_{\Om''} |f(x)| w_+(x)\psi^p(x)dx.
  \end{align}
 We estimate the left hand side term as below:
    {\small\begin{align*}
     (w_+(x)\psi(x)^p-w_+(y)\psi(y)^p)=\frac{w_+(x)-w_+(y)}{2} (\psi(x)^p+\psi(y)^p)+ \frac{w_+(x)+w_+(y)}{2} (\psi(x)^p-\psi(y)^p),
    \end{align*}}
 consequently,
  \begin{align}\label{eq133}
  	[w_+(x)-w_+(y)]^{l-1}& (w_+(x)\psi(x)^p-w_+(y)\psi(y)^p) \nonumber\\
  	&\geq |w_+(x)-w_+(y)|^{l}\frac{\psi(x)^p+\psi(y)^p}{2} \nonumber\\
  	&\quad- |w_+(x)-w_+(y)|^{l-1}\frac{w_+(x)+w_+(y)}{2}|\psi(x)^p-\psi(y)^p|.
  \end{align}
 Observing that 
 \begin{align*}
 	|\psi(x)^p-\psi(y)^p| \leq p(\psi(x)^p+\psi(y)^p)^{(p-1)/p} |\psi(x)-\psi(y)|,
 \end{align*}
 and using Young's inequality,  we get for $\e>0$ small,
  \begin{align*}
  	|w_+(x)-w_+(y)|^{l-1}&(w_+(x)+w_+(y))|\psi(x)^p-\psi(y)^p| \nonumber\\
  	&\leq \e |w_+(x)-w_+(y)|^{l} (\psi(x)^p+\psi(y)^p)  \nonumber\\
  	&\quad+  C(\e) (\psi(x)^p+\psi(y)^p)^{(p-l)/p} |\psi(x)-\psi(y)|^{l}(w_+(x)+w_+(y))^l.
  \end{align*}
 Thus, from \eqref{eq133} and using the fact that $\psi\leq 1$, for sufficiently small $\e>0$, we get
 \begin{align*}
 	[w_+(x)-w_+(y)]^{l-1}& (w_+(x)\psi(x)^p-w_+(y)\psi(y)^p) \nonumber\\
 	&\geq |w_+(x)-w_+(y)|^{l}\frac{\psi(x)^p+\psi(y)^p}{4} \nonumber\\
 	&\quad- C|\psi(x)-\psi(y)|^{l}(w_+(x)+w_+(y))^l. 
 \end{align*}
 Therefore, on account of \eqref{eq134}, we deduce that
  \begin{align}\label{eq139}
  	& \int_{E}\int_{E} \frac{|w_+(x)-w_+(y)|^{p} }{|x-y|^{N+ps_1}}(\psi(x)^p+\psi(y)^p) dxdy \nonumber\\
  	&\leq C\sum_{(l,s)}\int_{E}\int_{E} \frac{|\psi(x)-\psi(y)|^{l} }{|x-y|^{N+ls}}(w_+(x)+w_+(y))^l dxdy \nonumber\\
  	&+ C\sum_{(l,s)}  \Big(\sup_{x\in {\rm supp}(\psi)} \int_{\mb R^N\setminus E} \frac{w_+(y)^{l-1}dy}{|x-y|^{N+ls}}\Big) \int_{E}w_+(x)\psi^p(x)dx
  	+\int_{\Om''} |f(x)| w_+(x)\psi^p(x)dx.
  \end{align}
 Next, 
 \begin{align*}
  |w_+(x)\psi(x)^p - w_+(y)\psi(y)^p| &\leq 2^{p-1} |w_+(x) - w_+(y)|^p (\psi(x)^p+\psi(y)^p) \nonumber \\
  &\quad+ 2^{p-1} |\psi(x)-\psi(y)|^{p}(w_+(x)+w_+(y))^p,
 \end{align*}
 this together with \eqref{eq139} completes the proof of \eqref{caccioppnew} for $w_+$. \\
 The proof for $w_-$ runs along the similar lines of $w_+$, with minor modifications, by taking $\tl\phi= -w_-\psi^p$ in place of $\phi$. \QED
 \end{proof}

\begin{Proposition}[Local boundedness]\label{localbdd}
 Let the hypotheses of Lemma \ref{lemcaccp} be true. Let $r\in (0,1)$ and $x_0\in\mb R^N$ be such that $B_r(x_0)\Subset\Om''$. Then, the following holds
  \begin{align}\label{eqlocalbd}
  	\sup_{B_{r/2}(x_0)} w_\pm \leq C \left( \Xint-_{B_r}w_\pm^p dx \right)^{1/p}+ T_{p-1}(w_\pm;x_0,\frac{r}{2})+T_{q-1}(w_\pm;x_0,\frac{r}{2})^\frac{q-1}{p-1}+ 1, 
  \end{align}
 where $C=\big(C(N,p,q,s_1,s_2)(1+\|f\|_{L^\infty(B_r)})\big)^{p^*_{s_1}/p^2}$, with $C(N,p,q,s_1,s_2)>0$ as a constant.
\end{Proposition}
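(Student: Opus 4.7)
\textbf{The plan} is a De Giorgi iteration on shrinking concentric balls with increasing truncation levels, combining the Caccioppoli estimate of Lemma \ref{lemcaccp} with the fractional $(s_1,p)$-Sobolev embedding. I focus on $w_+$; $w_-$ follows by applying the argument to $-u$, which solves the equation with right-hand side $-f$ by the oddness of $(-\De)_p^{s_1}$ and $(-\De)_q^{s_2}$.

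\textbf{Setup.} Fix $M > 0$ to be chosen at the end, and for $j \geq 0$ set
\begin{align*}
 r_j = \tfrac{r}{2}(1+2^{-j}),\quad B_j = B_{r_j}(x_0),\quad k_j = k + M(1 - 2^{-j}),\quad W_j = (u - k_j)_+,
\end{align*}
together with cut-offs $\psi_j \in C_c^\infty(B_j)$ such that $\psi_j \equiv 1$ on $B_{j+1}$ and $|\nabla\psi_j|\leq C\, 2^j/r$, and put $Y_j := \Xint-_{B_j} W_j^p\,dx$. Then $W_j \leq w_+$ pointwise and $W_j > M\,2^{-j-1}$ on $\{W_{j+1} > 0\}$, so Chebyshev gives $|\{W_{j+1}>0\}\cap B_j| \leq (M\,2^{-j-1})^{-p} Y_j\,|B_j|$. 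The goal is to select $M$ so that $Y_j \to 0$, which yields $u \leq k + M$ on $B_{r/2}(x_0)$.

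\textbf{One-step recursion.} Applying Lemma \ref{lemcaccp} on $E = B_j$ with $\psi = \psi_j$ and $k$ replaced by $k_j$, followed by the fractional Sobolev embedding $W^{s_1,p}_0(B_j) \hookrightarrow L^{p^*_{s_1}}(B_j)$ to $W_j\psi_j$, the cut-off terms are bounded by $C\, 2^{jp} r^{-s_1 p}\int_{B_j}W_j^p$ and $C\, 2^{jq} r^{-s_2 q}\int_{B_j}W_j^q$; since $q \leq p$, the pointwise inequality $W_j^q \leq W_j^p + \mathbf{1}_{\{W_j>0\}}$ absorbs the second into the first at the cost of an additive term proportional to $|\{W_j>0\}\cap B_j|$ (the source of the final ``$+1$''). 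The tail terms, after using $W_j \leq w_+$ outside $B_j$ and the elementary comparison $|x-y| \geq 2^{-j-2}|x-x_0|$ valid for $y \in \mathrm{supp}(\psi_j)$ and $x \notin B_j$, are controlled by
\begin{align*}
 C\, 2^{jN}\bigl(r^{-s_1 p}\,T_{p-1}(w_+;x_0,r/2)^{p-1} + r^{-s_2 q}\,T_{q-1}(w_+;x_0,r/2)^{q-1}\bigr)\int_{B_j} W_j,
\end{align*}
and the source contributes $\|f\|_{L^\infty(B_r)}\int_{B_j} W_j$. Rewriting $\int_{B_j}W_j \leq |\{W_j>0\}\cap B_j|^{(p-1)/p}|B_j|^{1/p}Y_j^{1/p}$ and passing from $\int W_j^{p^*_{s_1}}$ to $Y_{j+1}$ by H\"older on $\{W_{j+1}>0\}$ combined with the Chebyshev bound, one reaches a recursion of the form
\begin{align*}
 Y_{j+1} \leq C_\star\, b^j\, \mathcal{K}\, M^{-\kappa}\, Y_j^{1+\nu}, \qquad \mathcal{K} \lesssim 1 + \|f\|_{L^\infty(B_r)} + T_{p-1}(w_+;x_0,r/2)^{p-1} + T_{q-1}(w_+;x_0,r/2)^{q-1},
\end{align*}
with $b > 1$ and $\kappa,\nu > 0$ depending only on $N, p, q, s_1, s_2$.

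\textbf{Conclusion and main obstacle.} By the classical fast-convergence lemma (if $Y_{j+1} \leq A b^j Y_j^{1+\nu}$ and $Y_0 \leq A^{-1/\nu}b^{-1/\nu^2}$, then $Y_j\to 0$), the choice
\begin{align*}
 M = C_1\Big(\Xint-_{B_r}w_+^p\Big)^{1/p} + T_{p-1}(w_+;x_0,r/2) + T_{q-1}(w_+;x_0,r/2)^{(q-1)/(p-1)} + 1,
\end{align*}
with $C_1 = \bigl(C(N,p,q,s_1,s_2)(1+\|f\|_{L^\infty(B_r)})\bigr)^{p^*_{s_1}/p^2}$, makes $Y_0$ small enough to trigger the iteration; hence $Y_j\to 0$ and \eqref{eqlocalbd} follows. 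The $(q-1)/(p-1)$ exponent on $T_{q-1}$ arises from matching the $q$-tail term $T_{q-1}^{q-1} M$ on the right-hand side against the natural $p$-scale $M^p$ of the iteration on the left. The principal obstacle is the non-homogeneity: the $(s_1,p)$- and $(s_2,q)$-pieces of Caccioppoli scale differently in $r$, so the usual scaling normalization available for the pure fractional $p$-Laplacian is not at our disposal. Subsuming the $q$-contribution into the $p$-iteration via $W_j^q \leq 1 + W_j^p$ and tracking the precise exponent $p^*_{s_1}/p^2$ on $(1 + \|f\|_{L^\infty})$ through the interplay between the Sobolev ratio $p^*_{s_1}/p$ and the Chebyshev step is the most delicate bookkeeping in the argument.
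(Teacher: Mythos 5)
Your proof is correct and follows essentially the same route as the paper's: a De Giorgi iteration on shrinking concentric balls with growing truncation levels, built from the Caccioppoli estimate of Lemma \ref{lemcaccp} and the fractional Sobolev(--Poincar\'e) embedding, arriving at the same recursive inequality and the same choice of $M$ (the paper's $\tl k$). The only differences are bookkeeping choices within the same architecture: you use a single level sequence together with Chebyshev and the crude pointwise bound $W_j^q\leq W_j^p+\mathbf{1}_{\{W_j>0\}}$ to absorb the $(s_2,q)$-term (valid since $s_2q\leq s_1p$ and $r<1$, and the resulting measure term is handled by Chebyshev exactly as you indicate), whereas the paper, following Di Castro--Kuusi--Palatucci, uses the interlaced double sequence $k_i,\tl k_i$ and the relation $\tl w_i^l\leq w_i^p/(\tl k_i-k_i)^{p-l}$ to the same effect, the factor $\tl k^{q-p}\leq 1$ playing the role of your ``$+1$''.
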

\begin{proof}
 We proceed similar to \cite[Theorem 1.4]{dicastro}.  For fixed $\tl k\in\mb R^+$ and $k\in\mb R$, we define the following: 
  \begin{align*}
  	r_i= (1+2^{-i})\frac{r}{2}, \ \ \tl r_i=\frac{r_i+r_{i+1}}{2};  	 \ \ k_i=k+(1-2^{-i})\tl k, \ \ \tl k_i= \frac{k_{i}+k_{i+1}}{2} \quad\mbox{for all }i\in\mb N.  
  \end{align*}
  Clearly, $r_{i+1}\leq \tl r_i \leq r_i$ and $k_i\leq \tl k_i$. Further, we set
  \begin{align*}
  	&B_i= B_{r_i}(x_0), \ \ \tl B_{r_i}= B_{\tl r_i}(x_0); \\
  	&\psi_i\in C^\infty_c(\tl B_i), \ 0\leq \psi_i \leq 1, \ \psi_i\equiv 1 \mbox{ in }B_{i+1}, \ \ |\na\psi_i| < 2^{i+3}/r; \\
    &\tl w_i= (u-\tl k_i)_+ \quad\mbox{and } w_i=(u-k_i)_+.
  \end{align*}
 Using the fractional Poincar\'e inequality for $\tl w_i\psi_i$ (for the case $ps_1<N$, otherwise taking an arbitrarily large number in place of $p^*_{s_1})$, we get 
 \begin{align}\label{eq135}
 	\bigg| \left(\Xint-_{B_i} |\tl w_i\psi_i|^{p^*_{s_1}}\right)^\frac{1}{p^*_{s_1}} - \big| \Xint-_{B_i} \tl w_i\psi_i\big|  \bigg|^p &\leq c \left(  \Xint-_{B_i} |\tl w_i\psi_i- (\tl w_i\psi_i)_{B_i}|^{p^*_{s_1}}\right)^\frac{p}{p^*_{s_1}} \nonumber\\
 	&\leq c\frac{r^{s_1p}}{r^N}\int_{B_i}\int_{B_i} \frac{|\tl w_i(x)\psi_i(x)-\tl w_i(y)\psi_i(y)|^p}{|x-y|^{N+ps_1}}dxdy.
 \end{align}
 Employing the Caccioppoli inequality \eqref{caccioppnew}, for $w_+=\tl w_i$, $\psi=\psi_i$ and $E=B_i$, we obtain
 {\small\begin{align}\label{eq136}
 	\int_{B_i}\int_{B_i} \frac{|\tl w_i(x)\psi_i(x)-\tl w_i(y)\psi_i(y)|^p}{|x-y|^{N+ps_1}}dxdy 
 	&\leq C\left[ \sum_{(l,s)} \int_{B_i}\int_{B_i} \frac{|\psi_i(x)-\psi_i(y)|^l}{|x-y|^{N+sl}}(\tl w_i(x)^l+ \tl w_i(y)^l)dxdy \nonumber \right.\\
 	& \left. +
 	\sum_{(l,s)}  \Big(\sup_{y\in {\rm supp}(\psi_i)} \int_{\mb R^N\setminus B_i} \frac{\tl w_i(x)^{l-1}dx}{|x-y|^{N+ls}}\Big) \int_{B_i}\tl w_i(y)\psi_i^p(y)dy \right. \nonumber \\
 	&\left. +\int_{\Om} |f(x)| \tl w_i(x)\psi_i^p(x)dx \right].
 \end{align} }
 Now we estimate the various terms of \eqref{eq136} in the following steps.\\
 \textit{Step I}: Estimate of the first term in \eqref{eq136}.\\
 By noticing the bounds on $\psi_i$ and the relation $\tl w_i^l \leq \frac{w_i^p}{(\tl k_i -k_i)^{p-l}}$, we have 
 \begin{align*}
 	\int_{B_i}\int_{B_i} \frac{|\psi_i(x)-\psi_i(y)|^l}{|x-y|^{N+sl}}(\tl w_i(x)^l+ \tl w_i(y)^l)dxdy 
 	&\leq c \frac{2^{i}}{r^l} \int_{B_i}\frac{w_i^p(y)}{(\tl k_i -k_i)^{p-l}}\left(\int_{B_i} \frac{dx}{|x-y|^{N+sl-l}}\right)dy \\
 	&\leq c \frac{2^{i(l+p-l)}}{l(1-s)}\frac{1}{\tl k^{p-l}}\frac{r^N}{r^{sl}} \Xint-_{B_i}w_i^p(x)dx.
 \end{align*}
 \textit{Step II}: Estimate of the second term in \eqref{eq136}.\\
 For $y\in\tl B_i={\rm supp}(\psi_i)$ and $x\in\mb R^N\setminus B_i$, we have 
 \begin{align*}
 	\frac{|x-x_0|}{|x-y|}\leq 1+ \frac{|x-x_0|}{|x-y|}\leq 1+\frac{\tl r_i}{r_i - \tl r_i}\leq 2^{i+4}.
 \end{align*}
 Then, using the relation $\tl w_i \leq \frac{w_i^p}{(\tl k_i -k_i)^{p-1}}$, we obtain 
  \begin{align*}
  	\Big(\sup_{y\in {\rm supp}(\psi_i)} &\int_{\mb R^N\setminus B_i} \frac{\tl w_i(x)^{l-1}dx}{|x-y|^{N+ls}}\Big) \int_{B_i}\tl w_i(y)\psi_i^p(y)dy \\
  	&\leq c \frac{2^{i(N+sl)}}{(\tl k_i -k_i)^{p-1}} \left(\int_{B_i} w_i^p(y)dy \right) \left( \int_{\mb R^N\setminus B_i} \frac{\tl w_i(x)^{l-1}dx}{|x-x_0|^{N+ls}} \right) \\
  	&\leq c\frac{2^{i(N+sl+p-1)}}{\tl k^{p-1}} \frac{r^N}{r^{sl}} T_{l-1}(w_0;x_0,r/2)^{l-1} \Xint-_{B_i} w_i^p(y)dy.
  \end{align*}
 \textit{Step III}: Estimate of the third term in \eqref{eq136}.\\
 Proceeding similarly to step II, we obtain
 \begin{align*}
 	\int_{\Om} |f(x)| \tl w_i(x)\psi_i^p(x)dx \leq c\frac{2^{i(p-1)}}{\tl k^{p-1}}  \| f \|_{L^\infty(B_r)} r^N \Xint-_{B_i} w_i^p(y)dy.
 \end{align*}
 Combining the estimate of Steps I, II and III in \eqref{eq136} and using \eqref{eq135}, we get
 {\small\begin{align*}
 	\left(\Xint-_{B_i} |\tl w_i\psi_i|^{p^*_{s_1}}\right)^\frac{1}{p^*_{s_1}} \leq  c 2^{i(N+sl+p-1)} \left[ \frac{1+\tl k^{q-p}}{q(1-s_1)}+ \frac{\sum_l T_{l-1}(w_0;x_0,\frac{r}{2})^{l-1}}{\tl k^{p-1}} + \| f \|_{L^\infty(B_r)} + 1 \right] \Xint-_{B_i} w_i^p(y)dy.
 \end{align*}}
 Now, using \cite[(4.7), p. 1292]{dicastro} together with the notation $A_i:=\big(\Xint-_{B_i} w_i^p(y)dy\big)^{1/p}$, in the above expression, we obtain
 {\small \begin{align}\label{eq137}
 	\left(\frac{\tl k}{2^{i+2}}\right)^\frac{p(p^*_{s_1}-p)}{p^*_{s_1}} A_{i+1}^\frac{p^2}{p^*_{s_1}} \leq c 2^{i(N+sp+p-1)} \left[ \frac{1+\tl k^{q-p}}{q(1-s_1)}+ \frac{\sum_l T_{l-1}(w_0;x_0,\frac{r}{2})^{l-1}}{\tl k^{p-1}} + \| f \|_{L^\infty(B_r)} + 1 \right] A_i^p.
 \end{align}}
 Then, we choose 
  \[ \tl k \geq T_{p-1}(w_0;x_0,\frac{r}{2})+T_{q-1}(w_0;x_0,\frac{r}{2})^\frac{q-1}{p-1}+ 1. \]
 Therefore, from \eqref{eq137}, we infer that
  \begin{align*}
  	\left(\frac{A_{i+1}}{\tl k}\right)^\frac{p}{p^*_{s_1}} \leq c^\frac{p}{p^*_{s_1}} 2^{i(\frac{N+sp+p-1}{p}+\frac{s_1p}{N})} \frac{A_i}{\tl k},
  \end{align*}
 where $c=\big(c(p,q,s_1,N)(1+\|f\|_{L^\infty(B_r)})\big)^{p^*_{s_1}/p^2}>0$. Simplifying the above expression for $\ga=s_1p/(N-s_1p)=p^*_{s_1}/p-1$ and $C=2^{\frac{(N+sp+p-1)N}{p(N-s_1p)}+\frac{s_1p}{N-s_1p}}>1$, we obtain
  \begin{align*}
  	\frac{A_{i+1}}{\tl k} \leq c C^i  \left(\frac{A_{i}}{\tl k}\right)^{1+\ga}.
  \end{align*}
 Now proceeding similarly to \cite[Proof of Theorem 1.1, p. 1293]{dicastro} with the choice 
  \[ \tl k =T_{p-1}(w_0;x_0,\frac{r}{2})+T_{q-1}(w_0;x_0,\frac{r}{2})^\frac{q-1}{p-1}+ 1 + c^\frac{1}{\ga}C^\frac{1}{\ga^2} A_0, \]
  we get
  \begin{align*}
  	\sup_{B_{r/2}} w_+\leq \tl k=  T_{p-1}(w_+;x_0,\frac{r}{2})+T_{q-1}(w_+;x_0,\frac{r}{2})^\frac{q-1}{p-1}+ 1 + c^\frac{1}{\ga}C^\frac{1}{\ga^2} \left( \Xint-_{B_r}w_+^p \right)^\frac{1}{p}.
  \end{align*}
 An analogous treatment for $w_-$ yields the similar result. This completes the proof of the proposition. \QED
\end{proof}
\begin{Corollary}\label{corlocalbdd}
 Suppose $1<q\leq p<\infty$, $s_2\leq s_1\in (0,1)$.  Let $u\in W^{s_1,p}_{\rm loc}(\Om)\cap L^{p-1}_{s_1p}(\mb R^N) \cap L^{q-1}_{s_2q}(\mb R^N)$ be a local weak solution to problem \eqref{probM}. Then, $u\in L^\infty_{\mathrm{loc}}(\Om)$. 
\end{Corollary}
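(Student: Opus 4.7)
The statement is essentially a direct consequence of the local boundedness estimate in Proposition \ref{localbdd}, and the plan is simply to verify that its hypotheses are met on arbitrary small interior balls.

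Fix $x_0\in\Om$ and pick $r\in(0,1)$ small enough that $B_{2r}(x_0)\Subset\Om$. Set $\Om''=B_r(x_0)$, $\Om'=B_{2r}(x_0)$, and let $g$ denote the restriction of $u$ to $\mb R^N\setminus\Om''$. The hypothesis $u\in W^{s_1,p}_{\rm loc}(\Om)\cap L^{p-1}_{s_1p}(\mb R^N)\cap L^{q-1}_{s_2q}(\mb R^N)$ yields $u\in W^{s_1,p}(\Om')\cap L^{p-1}_{s_1p}(\mb R^N)\cap L^{q-1}_{s_2q}(\mb R^N)$, so both $u\in X^{s_1,p}_g(\Om'',\Om')\cap X^{s_2,q}_g(\Om'',\Om')$ and $g$ itself enjoys the regularity required by Lemma \ref{lemcaccp}. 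Every $\phi\in X^{s_1,p}_0(\Om'',\Om')\cap X^{s_2,q}_0(\Om'',\Om')$ vanishes outside $\Om''$ and, since $\overline{\Om''}\subset B_{2r}(x_0)\Subset\Om$, extends by zero to a member of $W^{s_1,p}(\Om)$ with compact support in $\Om$; hence such $\phi$ are admissible test functions in Definition \ref{defnlocal}, and $u$ solves $(\mc G_{f,g}(\Om''))$ in the sense of Definition \ref{defnsolnbdry} (note $f\in L^\infty(\Om'')$ since $f\in L^\infty_{\rm loc}(\Om)$).

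I would then apply Proposition \ref{localbdd} with $k=0$, so that $w_\pm=u_\pm$, to obtain
\begin{align*}
\sup_{B_{r/2}(x_0)} u_\pm \leq C\Bigg(\Xint-_{B_r(x_0)} u_\pm^p\,dx\Bigg)^{\!1/p} + T_{p-1}(u_\pm;x_0,r/2) + T_{q-1}(u_\pm;x_0,r/2)^{\frac{q-1}{p-1}} + 1.
\end{align*}
Each term on the right-hand side is finite: the average of $u_\pm^p$ over $B_r(x_0)$ is controlled by $\|u\|_{L^p(B_r(x_0))}^p<\infty$ via the embedding $W^{s_1,p}(B_r(x_0))\hookrightarrow L^p(B_r(x_0))$, and the two nonlocal tails are dominated by the corresponding tails of $|u|$, which are finite by the tail-space hypotheses on $u$. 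Consequently $u_\pm\in L^\infty(B_{r/2}(x_0))$, and because $x_0\in\Om$ was arbitrary, $u\in L^\infty_{\rm loc}(\Om)$.

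The argument is really just plumbing around the powerful estimate of Proposition \ref{localbdd}, and the only step demanding any care---my candidate for the main obstacle, modest though it is---is verifying that the test-function classes of Definition \ref{defnlocal} and Definition \ref{defnsolnbdry} are compatible on the nested triple $\Om''\Subset\Om'\Subset\Om$. This hinges on the positivity of $\mathrm{dist}(\Om'',\mb R^N\setminus\Om')$, which ensures that the zero extension of a function in $X^{s_1,p}_0(\Om'',\Om')\cap X^{s_2,q}_0(\Om'',\Om')$ remains in $W^{s_1,p}(\Om)$ with compact support and is therefore admissible in \eqref{solndef}.
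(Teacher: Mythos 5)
Your argument is correct and is essentially the same as the paper's: restrict attention to a ball $B_{2r}(x_0)\Subset\Om$, observe that $u$ solves the corresponding boundary-value problem $(\mc G_{f,u}(\Om''))$ with exterior datum $g=u$ on that ball, and invoke Proposition \ref{localbdd} with $k=0$ followed by a covering argument. You are slightly more explicit than the paper in two places that deserve mention: (i) you verify that the admissible test functions for Definition \ref{defnsolnbdry} on the nested triple $\Om''\Subset\Om'\Subset\Om$ extend by zero to admissible test functions in Definition \ref{defnlocal}, and (ii) you spell out why each term in the right-hand side of \eqref{eqlocalbd}---the integral average and both nonlocal tails---is finite under the stated hypotheses, which is what actually turns the estimate into a boundedness statement. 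These are useful details the paper leaves implicit. One very minor point, shared with the paper's own write-up: Proposition \ref{localbdd} formally requires $B_r(x_0)\Subset\Om''$, so after setting $\Om''=B_r(x_0)$ you should apply the proposition with a slightly smaller inner radius (or take $\Om''$ a bit larger, e.g.\ $B_{3r/2}(x_0)$); this is cosmetic and does not affect the conclusion.
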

\begin{proof}
 Let $x_0\in\mb R^N$ and $R\in (0,1)$ such that $B_{2R}(x_0)\Subset\Om$. Since $u\in W^{s_1,p}_{\rm loc}(\Om)$, we have that $u\in W^{s_1,p}(B_{2R}(x_0))$ and satisfies:
  \begin{equation*}
  	\left\{ \begin{array}{rlll}
  	  (-\Delta)^{s_1}_{p}u+  (-\Delta)^{s_2}_{q}u &= f \quad \text{in}\;
  	 B_{R}(x_0), \\
  	   u &=u\quad \text{in} \ \mb R^N\setminus B_{R}(x_0).	
  	\end{array}
    \right.
  \end{equation*}
 Then, applying proposition \ref{localbdd} to $u$, with $k=0$ in there, we get $u\in L^\infty(B_{R/2}(x_0))$. By a covering argument, we get that $u\in L^\infty_{\mathrm{loc}}(\Om)$ and the estimate of the proposition \ref{localbdd} holds for $u$ (in place of $w_\pm$), and for all $r\in (0,1)$ and $x_0\in\mb R^N$ such that $B_{2r}(x_0)\Subset\Om$. \QED
\end{proof}

Next, we mention that the solution $u$ of problem $(\mc G_{f,g}(\Om''))$ remains bounded near the boundary also, provided $g$ is bounded. The proof runs exactly the same as that of proposition \ref{localbdd} by using the corresponding Caccioppoli type inequality near the boundary (similar to \eqref{caccioppnew}, obtained from Lemma \ref{caccioppnew} for any $E\subset\Om''\Subset\Om'$, by noting that $u\in W^{s_1,p}(\Om')\cap W^{s_2,q}(\Om')$).
\begin{Proposition}
 Let $\Om''\Subset\Om'\subset\mb R^N$ and let $u\in X^{s_1,p}_{g}(\Om'',\Om') \cap X^{s_2,q}_{g}(\Om'',\Om')$ be a solution to problem $(\mc G_{f,g}(\Om''))$ with $g\in W^{s_1,p}(\Om')\cap L^{p-1}_{s_1p}(\mb R^N)\cap L^{q-1}_{s_2q}(\mb R^N)$. Let $x_0\in\pa\Om''$ and let $r\in (0,1)$ be such that $B_r(x_0)\Subset\Om'$, and suppose $g\in L^\infty(B_r(x_0))$.
 Set $w_{\pm}(x):=(u(x)-k_\pm)_{\pm}$, where $k_+>\sup_{B_r(x_0)} g$ and $k_-<\inf_{B_r(x_0)} g$. Then, \eqref{eqlocalbd} holds, consequently, $u$ is bounded in $B_{r/2}(x_0)$.
\end{Proposition}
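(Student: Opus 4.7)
The plan is to mirror the proof of Proposition~\ref{localbdd}, exploiting the single new ingredient specific to the boundary setting: the choices $k_+>\sup_{B_r(x_0)}g$ and $k_-<\inf_{B_r(x_0)}g$ force $w_\pm$ to vanish on $(\mb R^N\setminus\Om'')\cap B_r(x_0)$. Indeed, for $x\in B_r(x_0)\setminus\Om''$ one has $u(x)=g(x)$, so $w_+(x)=(g(x)-k_+)_+=0$ and analogously $w_-(x)=0$. Consequently, for any cutoff $\psi\in C_c^\infty(B_r(x_0))$ and $x\notin\Om''$, either $\psi(x)=0$ (if $x\notin B_r(x_0)$) or $w_\pm(x)=0$ (if $x\in B_r(x_0)\setminus\Om''$); in either case $w_\pm\psi^p$ vanishes on $\mb R^N\setminus\Om''$. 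Hence $u\mp w_\pm\psi^p$ still agrees with $g$ outside $\Om''$, making $w_\pm\psi^p$ an admissible test function in the class $X^{s_1,p}_0(\Om'',\Om')\cap X^{s_2,q}_0(\Om'',\Om')$ of Definition~\ref{defnsolnbdry} (membership in $W^{s_1,p}(\Om')$ follows from $u\in W^{s_1,p}(\Om')$, and membership in the tail spaces from the bound $w_\pm^{l-1}\le C(|u|^{l-1}+|k_\pm|^{l-1})$ together with $u\in L^{p-1}_{s_1p}\cap L^{q-1}_{s_2q}$).

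First I would rederive a Caccioppoli-type inequality of the same form as~\eqref{caccioppnew} in which the ball $E=B_i$ is now allowed to contain the boundary point $x_0\in\pa\Om''$. Using the admissibility just noted, one inserts $\phi=\pm w_\pm\psi^p$ into~\eqref{solndef} and runs the algebraic manipulations of Lemma~\ref{lemcaccp} verbatim: the pointwise inequality $[u(x)-u(y)]^{l-1}(w_\pm(x)\psi(x)^p-w_\pm(y)\psi(y)^p)\ge[w_\pm(x)-w_\pm(y)]^{l-1}(\cdots)$, the tail splitting at $(\mb R^N\setminus E)\times E$, and the Young-inequality step that separates the jump of $w_\pm$ from the jump of $\psi$, all depend exclusively on the defining property that $w_+=0$ wherever $u\le k_+$ (and symmetrically for $w_-$). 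This property is insensitive to whether $E$ meets $\pa\Om''$. The $f$-term $\int_{\Om''}|f|w_\pm\psi^p$ is harmless because its integrand is supported in $\Om''\cap B_i$, a set on which one can assume $f$ is bounded.

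With the boundary Caccioppoli inequality in hand, the Moser iteration of Proposition~\ref{localbdd} transfers mechanically: build the same dyadic sequences $r_i,\tl r_i,k_i,\tl k_i$ and cutoffs $\psi_i\in C_c^\infty(\tl B_i)$, apply the fractional Poincar\'e inequality on $B_i$ (which holds irrespective of whether $B_i$ straddles $\pa\Om''$, since it only requires $\tl w_i\psi_i\in W^{s_1,p}(B_i)$), and re-run Steps I--III of that proof without alteration; the tail integrals $\int_{\mb R^N\setminus B_i}w_0^{l-1}|x-x_0|^{-N-ls}dx$ are finite by the tail-space membership of $w_0$ derived above. The resulting geometric recursion then gives~\eqref{eqlocalbd} for $w_\pm$, from which
\[ \sup_{B_{r/2}(x_0)}|u|\le\max\{k_+,\,-k_-\}+\sup_{B_{r/2}(x_0)}(w_++w_-)<\infty, \]
using the boundedness of $g$ on $B_r(x_0)$ to secure the admissibility of $k_\pm$. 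The only conceptual obstacle is the admissibility of the test function $w_\pm\psi^p$ when $\psi$ is supported in a ball crossing $\pa\Om''$, which is exactly what the strict inequalities $k_+>\sup_{B_r(x_0)}g$ and $k_-<\inf_{B_r(x_0)}g$ are designed to handle; beyond that, the boundary proof is a bookkeeping transcription of the interior one, with no new analytic ingredients required.
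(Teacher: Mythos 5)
Your proposal is correct and matches the paper's (very brief) indicated route: the paper simply remarks that the proof ``runs exactly the same as that of proposition~\ref{localbdd} by using the corresponding Caccioppoli type inequality near the boundary,'' and you have spelled out the one non-trivial detail it leaves implicit, namely that the strict choice $k_+>\sup_{B_r(x_0)}g$, $k_-<\inf_{B_r(x_0)}g$ makes $w_\pm$ vanish on $B_r(x_0)\setminus\Om''$, so that $w_\pm\psi^p$ is an admissible test function in $X^{s_1,p}_0(\Om'',\Om')\cap X^{s_2,q}_0(\Om'',\Om')$ even when $\mathrm{supp}\,\psi$ crosses $\pa\Om''$, after which the Caccioppoli derivation and Moser iteration transfer verbatim.
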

Next, we state the boundedness property of solutions to the critical exponent problems.  
\begin{Theorem}\label{mainbound}
 Let $1<q\leq p<\infty$ and $0<s_2\leq s_1<1$.  Let $u\in W^{s_1,p}_0(\Om)$ be a weak solution to problem \eqref{probM} with $f(x):=f(x,u)$, a Carath\'eodory function satisfying $|f(x,t)|\leq C_0(1+|t|^{p^*_{s_1}-1})$, for all $t\in\mb R$ and a.e. $x\in\Om$, where $C_0>0$ is a constant. Then, $u\in L^\infty(\Om)$. 
\end{Theorem}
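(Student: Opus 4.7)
The plan is to combine a Moser iteration with a Brezis--Kato type absorption step to accommodate the critical growth. I argue for $u_+$; the case of $u_-$ is symmetric. For $M>0$ set $u_M:=\min\{u_+,M\}$ and, for each $\ba\geq 0$, use the admissible test function $\phi:=u_+ u_M^{p\ba}\in W^{s_1,p}_0(\Om)\cap L^\infty(\Om)$ in the weak formulation \eqref{solndef}; note that $\phi$ lies in $W^{s_1,p}_0(\Om)$ by truncation and composition with a Lipschitz function, so it is an admissible test function after the usual density argument.

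Using a pointwise algebraic inequality of Brasco--Lindgren--Schikorra type, the $p$-contribution bounds $c(\ba+1)^{-p}[u_M^{\ba+1}]^p_{W^{s_1,p}(\mb R^N)}$ from above, while the $q$-contribution is itself nonnegative (since the chosen test function is a nondecreasing function of $u$) and is simply discarded. Combining with the fractional Sobolev embedding $W^{s_1,p}_0(\Om)\hookrightarrow L^{p^*_{s_1}}(\Om)$ and the growth hypothesis on $f$, this yields
\begin{equation*}
  \|u_M^{\ba+1}\|_{L^{p^*_{s_1}}(\Om)}^{p}\leq C(\ba+1)^{p}\int_\Om \big(1+u_+^{p^*_{s_1}-1}\big)\,u_+\,u_M^{p\ba}\,dx.
\end{equation*}
When $N\leq p s_1$, the embedding reaches arbitrary $L^r$, and the critical-growth issue disappears.

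For the critical tail I write $u_+^{p^*_{s_1}-1}\,u_+\,u_M^{p\ba}=u_+^{p^*_{s_1}-p}\cdot(u_+u_M^{\ba})^p$, split into $\{u_+>R\}$ and $\{u_+\leq R\}$, and apply H\"older on the first with exponents $p^*_{s_1}/(p^*_{s_1}-p)$ and $p^*_{s_1}/p$. Since $u\in L^{p^*_{s_1}}(\Om)$ a priori from $u\in W^{s_1,p}_0(\Om)$, choosing $R$ large renders $\|u\|^{p^*_{s_1}-p}_{L^{p^*_{s_1}}(\{u_+>R\})}$ small enough to be absorbed into the Sobolev left-hand side; on $\{u_+\leq R\}$ the integrand is dominated by $R^{p^*_{s_1}-p}u_M^{p(\ba+1)}$. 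Letting $M\to\infty$ by Fatou yields, for every $\ba\geq 0$,
\begin{equation*}
  \|u_+\|^{p(\ba+1)}_{L^{(\ba+1)p^*_{s_1}}(\Om)}\leq \bigl(C(\ba+1)\bigr)^{p}\Bigl(1+R^{p^*_{s_1}-p}\|u_+\|^{p(\ba+1)}_{L^{(\ba+1)p}(\Om)}\Bigr).
\end{equation*}

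Setting $\vartheta:=p^*_{s_1}/p>1$ and iterating with $\ba_n+1=\vartheta^{n+1}$, the convergence of $\sum_n \vartheta^{-n}\log(C\vartheta^n)$ shows that $\|u_+\|_{L^{p\vartheta^{n+1}}(\Om)}$ remains bounded uniformly in $n$, whence $u_+\in L^\infty(\Om)$; the symmetric argument for $u_-$ completes the proof. The main obstacle is precisely the Brezis--Kato absorption at the critical exponent: one cannot initiate the iteration by a naive subcritical estimate, because $f$ grows exactly at the Sobolev rate, and only the a priori finiteness of $\|u\|_{L^{p^*_{s_1}}(\Om)}$ together with the smallness of the measure $|\{u_+>R\}|$ allows the critical piece to be absorbed. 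By contrast, the presence of the $q$-operator is benign here, since both contributions of the operator are nonnegative on the chosen test function, so discarding the $q$-term costs nothing.
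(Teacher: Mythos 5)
Your proposal is correct and takes essentially the same route as the paper. The paper tests with the odd extension of $g_\ka(t)=t\,(\min\{t,m\})^\ka$, which is exactly your $u_+\,u_M^{p\ba}$ (with $\ka=p\ba$, handling $u_\pm$ jointly via the odd extension), invokes \cite[Lemma A.2]{brasco2} for the lower bound $A_p(u,g_\ka(u))\geq\|G_\ka(u)\|^p_{W^{s_1,p}_0(\Om)}$ and the nonnegativity of the $q$-form, and then defers the Brezis--Kato absorption and Moser iteration to \cite[Theorem 3.3]{chenJFA}; you simply spell those iteration details out explicitly.
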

\begin{proof}
 We sketch the proof, as it is essentially the same as \cite[Theorem 3.3]{chenJFA}. For $m>0$, $t\ge 0$ and $\kappa\ge 0$, we define $g_\ka(t)=t (\llcorner t \lrcorner_m)^\ka$, where $\llcorner t \lrcorner_m:=\min\{t,m\}$ and extend $g_\ka$ and $\llcorner t \lrcorner_m$ as an odd function. Set $G_\ka(t)=\int_{0}^{t}g^\prime_\ka(\tau)^{1/p}d\tau$. Then, it is easy to verify that 
 \begin{align*}
 	G_\ka(t)\geq \frac{p(\ka+1)^{1/p}}{\ka+p} g_{\ka/p}(t).
 \end{align*}
 Taking $g_\ka(u)$ as a test function in the weak formulation of problem \eqref{probM}, we get
 \begin{align}\label{eq21}
  A_p(u, g_\ka(u),\mb R^{2N})+ A_q(u, g_\ka(u),\mb R^{2N}) &=\int_{\Om}f(x,u)g_\ka(u)\leq C_0 \int_{\Om} \big(1+|u|^{p^*_{s_1}-1}\big)|u| |\llcorner u \lrcorner_m|^\ka.
 \end{align}
 Using \cite[Lemma A.2]{brasco2}, we have
 \begin{align*}
 	A_p(u,g_\ka(u),\mb R^{2N})\geq \| G_\ka(u)\|^p_{W^{s_1,p}_0(\Om)} \quad\mbox{and } \ 
 	A_q(u,g_\ka(u),\mb R^{2N})\geq \| \tilde{G}_\ka(u)\|^q_{W^{s_2,q}_0(\Om)}\geq 0,
 \end{align*}
 where $\tilde{G}_\ka(t)=\int_{0}^{t}g^\prime_\ka(\tau)^{1/q}d\tau$. Therefore, from \eqref{eq21}, we obtain
 \begin{align*}
 	\| G_\ka(u)\|^p_{W^{s_1,p}_0(\Om)} \leq C \int_{\Om} \big(1+|u|^{p^*_{s_1}-1}\big)|u| |\llcorner u \lrcorner_m|^\ka.
 \end{align*}
 Then, rest of the proof follows exactly on the same lines of \cite[Theorem 3.3]{chenJFA}. \QED 
\end{proof}	
\begin{Remark}
 We remark that, as in \cite[Remark 3.4]{chenJFA}, the quantity $\|u \|_{L^\infty(\Om)}$ depends only on the constants $C_0$, $N$, $p$, $s_1$, $\|u\|_{W^{s_1,p}_0(\Om)}$ and the constant $M>0$ satisfying $\int_{\{|u|\geq M\}} |u|^{p^*_{s_1}}<\e$, for given $\e\in (0,1)$.
\end{Remark}

 \subsection{Interior regularity}
 We have the following interior regularity result.
\begin{Theorem}\cite[Theorem 2.10]{DDS}
  Let $2\le q\le p<\infty$. There exist $\al_0\in(0,1)$ and $C>0$ such that if $u\in\widetilde{W}^{s_1,p}(B_{R_0})\cap\widetilde{W}^{s_2,q}(B_{R_0})$ is bounded in $\mb R^N$ and satisfies $|(-\Delta)_p^{s_1}u+(-\Delta)_q^{s_2}u|\le K$ weakly in $B_{R_0}$, for some $K$ and $R_0>0$, then for all $r\in(0, R_0)$, 
 	\[  [u]_{C^{\al_0}(B_r)} \le \frac{C}{R_0^{\al_0}}\Big((KR_0^{ps_1})^\frac{1}{p-1} +\|u\|_{L^\infty(\mb R^N)} +T_{p-1}(u;R_0)+R_0^\frac{ps_1-qs_2}{p-1} \Big).\]
\end{Theorem}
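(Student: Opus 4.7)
The plan is to prove the oscillation decay estimate in the spirit of De~Giorgi–Nash–Moser adapted to the nonlocal, non-homogeneous setting. First, by translation I may center $B_{R_0}$ at the origin. A direct rescaling is unavailable because of the non-homogeneity, so instead I would introduce
\[ M := (KR_0^{ps_1})^{\frac{1}{p-1}} + \|u\|_{L^\infty(\mb R^N)} + T_{p-1}(u;R_0) + R_0^{\frac{ps_1-qs_2}{p-1}} \]
and consider $v(x) := u(R_0 x)/M$ on $B_1$. A short computation shows that $v$ satisfies $|(-\Delta)_p^{s_1}v + \mu(-\Delta)_q^{s_2}v|\le K'$ weakly in $B_1$, with both $\mu = M^{q-p}R_0^{ps_1-qs_2}$ and $K'$ controlled by universal constants; the term $R_0^{(ps_1-qs_2)/(p-1)}$ in the definition of $M$ is precisely what forces $\mu \lesssim 1$. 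So after reduction it suffices to prove, for the normalized $v$ with $\|v\|_{L^\infty(\mb R^N)} \le 1$ and $T_{p-1}(v;1)\le 1$, an estimate $[v]_{C^{\al_0}(B_r)}\le C$ for all $r\in(0,1)$.

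Next, I would establish the oscillation decay $\operatorname{osc}(v,B_{\rho/2})\le \lambda\operatorname{osc}(v,B_\rho) + \text{tails}(\rho)$ for a fixed $\lambda\in(0,1)$ and all $\rho\le 1/2$. The two essential ingredients are the Caccioppoli inequality of Lemma~\ref{lemcaccp} (applied to the truncations $(v-k)_\pm$ of $v$) and a nonlocal Logarithmic Lemma of the form
\[ \int_{B_\rho}\!\int_{B_\rho}\Big|\log\tfrac{v(x)+\delta}{v(y)+\delta}\Big|^p\frac{dx\,dy}{|x-y|^{N+ps_1}} \le C\rho^{N-ps_1}\big(1 + \delta^{1-p}(\text{perturbation})\big), \]
valid for non-negative super-solutions with a small additive $\delta>0$. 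The perturbation absorbs both the forcing $K$ and the extra $q$-contribution because, for $\|v\|_\infty\le 1$, one has $|w|^{q-1}\le |w|^{p-1}$ pointwise for $|w|\le 1$, so the $q$-term is controlled in $L^\infty$-bounded regimes. Combining the logarithmic estimate with a De~Giorgi-type measure-to-pointwise argument (as in Di~Castro–Kuusi–Palatucci) gives a one-sided decrease of the supremum on $B_{\rho/2}$, which in turn delivers the oscillation decay.

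Iterating the oscillation decay on dyadic balls and summing the (geometrically decaying) tail contributions $T_{p-1}(v;\rho)$ and $T_{q-1}(v;\rho)$ (both bounded via $\|v\|_{L^\infty(\mathbb R^N)}\le 1$ and the given $T_{p-1}(v;1)$ bound) yields $[v]_{C^{\al_0}(B_r)}\le C$ for some $\al_0=-\log_2\lambda\in(0,1)$. Undoing the rescaling $u(x) = M\,v(x/R_0)$ produces exactly the claimed estimate, with the factor $R_0^{-\al_0}$ arising from the Hölder seminorm's scaling.

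The hard part will be the Logarithmic Lemma and the subsequent weak Harnack inequality in the non-homogeneous setting: in the single-operator case one crucially uses that multiplying $u$ by a constant leaves the equation invariant up to a power of the constant, which permits a clean Moser iteration on negative powers. Here the $p$- and $q$-contributions respond differently to such rescalings, so the argument must carry both terms through simultaneously and show that the $q$-term, thanks to $s_2\le s_1$, $q\le p$, and the normalization $\|v\|_\infty\le 1$, never dominates; this is the reason the hypothesis $2\le q\le p$ (which makes the integrands convex and allows monotonicity-based manipulations of $[\cdot]^{l-1}$) is essential and why the residual scale $R_0^{(ps_1-qs_2)/(p-1)}$ is unavoidable in the final bound.
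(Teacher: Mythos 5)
You should first note that the statement you are proving is not established in the present paper at all: it is quoted verbatim from \cite[Theorem 2.10]{DDS}. The route taken in \cite{DDS} proceeds through a weak Harnack inequality for non-negative supersolutions (the paper explicitly cites \cite[Lemma 2.7]{DDS} in the proof of Theorem \ref{strongmax}) together with an oscillation-decay argument in the spirit of Iannizzotto--Mosconi--Squassina \cite{iann}. Your sketch follows instead the Di Castro--Kuusi--Palatucci route \cite{dicastro} (Caccioppoli + Logarithmic Lemma + De Giorgi measure-to-pointwise). Both routes are in principle available --- indeed, the Remark following Corollary \ref{cornonpb} in this paper explicitly points out that establishing the Logarithmic Lemma together with \eqref{caccioppnew} ``one can prove the interior regularity result'' --- so your choice of approach is legitimate, just genuinely different from the cited source.

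However, your sketch contains a concrete error precisely at the step you identify as the hard one. You assert that for $\|v\|_\infty\le 1$ one has $|w|^{q-1}\le|w|^{p-1}$ pointwise for $|w|\le 1$, so that the $(q,s_2)$-contribution is subordinate to the $(p,s_1)$-contribution in the Logarithmic Lemma. This inequality is backwards: since $q\le p$ means $q-1\le p-1$, and $a\mapsto|w|^a$ is nonincreasing for $0<|w|<1$, one actually has $|w|^{q-1}\ge|w|^{p-1}$. In the small-oscillation regime that the De Giorgi iteration operates in, the $q$-term is therefore the \emph{dominant} one of the two, not the subordinate one. Since this inequality is the only mechanism you offer for why the Logarithmic Lemma survives the exponent mismatch between the $(u+\delta)^{1-p}\psi^p$ test function and the $q$-kernel, your argument has a genuine gap at its crux; a correct treatment needs to confront the dominance of the $q$-term rather than wave it away.

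Two subsidiary points are worth recording. First, your rescaling produces $\mu=M^{q-p}R_0^{ps_1-qs_2}$, and the bound $M\ge R_0^{(ps_1-qs_2)/(p-1)}$ only yields $\mu\le R_0^{(ps_1-qs_2)(q-1)/(p-1)}$; this is $\lesssim 1$ only when $R_0\lesssim 1$ (given $ps_1\ge qs_2$), so the claim that the residual scale in $M$ ``forces $\mu\lesssim 1$'' tacitly assumes $R_0$ is bounded. Second, even with $\mu\in(0,1]$ fixed, the Caccioppoli inequality and the Logarithmic Lemma must be verified to hold with constants uniform in $\mu$; the Caccioppoli step is harmless because the $q$-form has a sign when tested with $w_\pm\psi^p$, but this uniformity should be stated explicitly rather than left implicit.
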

 \subsubsection{Improved interior regularity}
 We first fix some notation which will be used in this subsection.
 For a measurable function $u:\mb R^N\to \mb R$ and $h\in\mb R^N$, we define 
    \begin{align*}
     u_h(x)=u(x+h), \quad \de_h u(x)=u_h(x)-u(x), \quad \de^2_h u(x)= \de_h(\de_h u(x))= u_{2h}(x)+u(x)-u_h(x).
    \end{align*} 
 Now, we recall the discrete Leibniz rule 
  \[ \de_h (uv)=u_h\de_h v + v \de_h u \quad\mbox{and } \de^2_h(uv)=u_{2h}\de^2_h v+2\de_h v \de_h u_h + v \de^2_h u. \]
 For $1\le m<\infty$ and $u\in L^m(\mb R^N)$, we set
  \begin{align*}
  	&[u]_{\mc N^{\ba,m}_{\infty}(\mb R^N)}:= \sup_{|h|>0} \bigg\| \frac{\de_h u}{|h|^\ba}\bigg\|_{L^m(\mb R^N)} \quad\mbox{for }  0<\ba\leq 1, \\
  	&[u]_{\mc B^{\ba,m}_{\infty}(\mb R^N)}:= \sup_{|h|>0} \bigg\| \frac{\de^2_h u}{|h|^\ba}\bigg\|_{L^m(\mb R^N)} \quad\mbox{for }  0<\ba< 2.
 \end{align*}

 \begin{Proposition}\label{propin1}
  Let $u\in W^{s_1,p}_{\rm loc}(B_{2R_0})\cap L^{p-1}_{s_1p}(\mb R^N) \cap L^{q-1}_{s_2q}(\mb R^N)$ be a local weak solution to the problem:
 	 \begin{align*}
 	 	(-\Delta)_p^{s_1}u+(-\Delta)_q^{s_2}u =f \quad\mbox{in }B_{2R_0},
 	 \end{align*}
  where $R_0\in (0,1)$ and $f\in L^\infty_{\mathrm{loc}}(B_{2R_0})$.
   Suppose that for some $m\geq p$ and $0<h_0<\frac{R_0}{10}$, we have 
   \begin{align*}
   	\sup_{0<|h|<h_0} \bigg\| \frac{\de^2_h u}{|h|^{s_1}}\bigg\|_{L^m(B_{R_0})} <\infty.
   \end{align*}
  Then, for every $4h_0<R\leq R_0 - 5h_0$, we have 
    \begin{align*}
     \sup_{0<|h|<h_0} \bigg\| \frac{\de^2_h u}{|h|^{s_1}}\bigg\|^{m+1}_{L^{m+1}(B_{R-4h_0})} 
     \leq C K_2(u,m) \left( \sup_{0<|h|<h_0} \bigg\| \frac{\de^2_h u}{|h|^{s_1}}\bigg\|^m_{L^m(B_{R+4h_0})} +1 \right),
    \end{align*}
  where $C= C(N,h_0,p,m,s_1)>0$ (which depends inversely on $h_0$) and $K_2(u,m)$ is given by \eqref{eqK(u)} (defined below). 
 \end{Proposition}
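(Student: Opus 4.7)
My strategy is a one-step discrete Moser iteration in finite-difference (Nikolskii) spaces, in the spirit of Brasco--Lindgren--Schikorra \cite{brascoH} for the fractional $p$-Laplacian, combined crucially with the a priori $L^\infty_{\rm loc}$ bound from Corollary \ref{corlocalbdd} to absorb the non-homogeneous $q$-Laplacian contribution.

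First I would fix a smooth cut-off $\eta\in C_c^\infty(B_{R+3h_0})$ with $\eta\equiv 1$ on $B_{R-3h_0}$, $0\le \eta\le 1$ and $|\na\eta|\lesssim h_0^{-1}$. For $0<|h|<h_0$ the translate $u_h(x):=u(x+h)$ solves the analogous equation on a translated ball, so subtracting the two weak formulations for $u_h$ and $u$ and testing against
\[ \phi := \eta^p\,[\delta_h u]^{\beta}, \]
with $\beta>1$ chosen so that the $p$-Laplacian form carries $L^{m+1}$ homogeneity in $\delta_h u$, yields a quantitative identity. The $p$-Laplacian side is nonnegative by the classical monotonicity inequality for $[\,\cdot\,]^{p-1}$ and, after symmetrizing in $(x,y)$ and applying the discrete Leibniz rule to $\phi$, its leading term controls the Besov-type quantity
\[ \iint \frac{\bigl|\,[\delta_h u(x)]^{(m+1)/p}\,\eta(x)-[\delta_h u(y)]^{(m+1)/p}\,\eta(y)\bigr|^p}{|x-y|^{N+p s_1}}\, dx\, dy, \]
which in turn dominates $\|\delta_h^2 u/|h|^{s_1}\|_{L^{m+1}(B_{R-4h_0})}^{m+1}$ after using the identity $\delta_h^2 u(x)=\delta_h u(x+h)-\delta_h u(x)$ together with a Nikolskii--Besov embedding.

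On the right-hand side of the identity, four families of error terms appear, each ultimately to be absorbed into $K_2(u,m)\cdot\sup_{|h|<h_0}\|\delta_h^2 u/|h|^{s_1}\|_{L^m(B_{R+4h_0})}^m + K_2(u,m)$. The commutator errors from the discrete Leibniz rule are handled by $|\na\eta|\lesssim h_0^{-1}$ and Young's inequality. The non-local tail contributions from $\mb R^N\setminus B_{R_0}$ are controlled by $T_{p-1}(u;x_0,\bar R_0)^{p-1}$ and $T_{q-1}(u;x_0,\bar R_0)^{q-1}$, matching exactly the tail terms appearing inside $K_2$. The forcing term $\int(f_h-f)\phi$ is bounded by $\|f\|_{L^\infty(B_{R_0})}$ times a Young-absorbable factor. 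Finally, the $q$-Laplacian difference I would treat via the pointwise mean-value bound $|[a]^{q-1}-[b]^{q-1}|\lesssim (|a|+|b|)^{q-2}|a-b|$, combined with the a priori estimate on $\|u\|_{L^\infty(B_{\bar R_0})}$, so as to convert the $q$-part into a lower-order perturbation of the $p$-part, absorbable by Young's inequality.

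The main obstacle is precisely this last step. Unlike the homogeneous setting of \cite{brascoH}, where one can first reduce to $f=0$ by a rescaling argument and then transfer, the lack of scale invariance of $(-\Delta)_p^{s_1}+(-\Delta)_q^{s_2}$ with $s_1\ne s_2$ and $p\ne q$ forbids such a reduction, and the two operators compete rather than co-scale under the finite-difference manipulation. The decisive ingredient is the local $L^\infty$ bound of Corollary \ref{corlocalbdd}: it permits the $q$-term to be absorbed at the cost of powers of $\|u\|_{L^\infty(B_{\bar R_0})}$, which is why a large power of this quantity is built into $K_2(u,m)$. Together with the Caccioppoli inequality \eqref{caccioppnew}, this delivers the Besov $L^m\to L^{m+1}$ gain in a single step.
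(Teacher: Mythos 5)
Your high-level skeleton matches the paper's proof: differentiate the equation discretely, test against $\phi=\eta^p[\delta_h u]^\beta$ with $\beta=m-p+2$ so the principal $p$-term carries $L^{m+1}$ homogeneity, split into interior, tail, and forcing contributions, and close the iteration through Nikolskii--Besov embeddings. The choice of cut-off, the tail estimate against $T_{p-1}$ and $T_{q-1}$, and the forcing estimate through $\|f\|_{L^\infty}$ are all as in the paper.

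The genuine gap is in how you propose to dispose of the $q$-Laplacian contribution. You suggest applying the pointwise bound $|[a]^{q-1}-[b]^{q-1}|\lesssim(|a|+|b|)^{q-2}|a-b|$ to the entire $q$-form and then absorbing it via $\|u\|_{L^\infty}$ and Young. This fails near the diagonal. Once you bound $(|u_h(x)-u_h(y)|+|u(x)-u(y)|)^{q-2}$ by $\|u\|_\infty^{q-2}$, what remains is (schematically) $\iint|\delta_h u(x)-\delta_h u(y)|\,|\phi(x)-\phi(y)|\,|x-y|^{-N-qs_2}$, whose symmetric part $\iint|\delta_h u(x)-\delta_h u(y)|^2(\dots)^{\beta-1}\eta^p\,|x-y|^{-N-qs_2}$ is a genuine $W^{qs_2/2,2}$-type Gagliardo quantity of $\delta_h u$ and is not controlled by $L^\infty$ at all. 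There is no factor of $|x-y|^2$ to tame the kernel, and Young's inequality only trades it against the $p$-term at the cost of a conjugate term that is just as singular. The paper avoids this entirely by \emph{not} linearizing the $q$-form: it uses the same monotonicity decomposition for $I_1(q,s_2)$ as for $I_1(p,s_1)$ — the $q$-version of the main term is itself nonnegative and is simply dropped from the lower bound (cf. the remark after \eqref{eqi3} that the first term on the right of \eqref{eqi2} is nonnegative). The only $q$-quantity that must actually be estimated is the commutator error $I_{11}(q)$, which carries the factor $|\eta^{p/2}(x)-\eta^{p/2}(y)|^2\lesssim h_0^{-2}|x-y|^2$; this extra two powers of $|x-y|$ are what make the kernel locally integrable. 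Moreover, even after that, for $q>2$ the $L^\infty$ bound alone does not suffice: one must invoke the Besov hypothesis via the Young split and Sobolev-type embedding in \eqref{eqi7}--\eqref{eqi8}, which is how the assumed $\sup_{|h|<h_0}\|\delta_h^2 u/|h|^{s_1}\|_{L^m}$ re-enters. Your proposal does not explain where the assumed $L^m$ Besov bound is used on the $q$-side at all, which is a second symptom of the same omission. So the $q$-treatment needs to be replaced by the two-step ``drop the monotone part, then estimate the $I_{11}(q)$ commutator via $L^\infty$ \emph{plus} the Besov hypothesis'' argument.

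A minor point: the nonlocal Caccioppoli inequality \eqref{caccioppnew} is used to obtain the local $L^\infty$ bound (Proposition \ref{localbdd}, Corollary \ref{corlocalbdd}) feeding into $K_2(u,m)$; it does not enter the discrete-differential estimate directly, so citing it as a parallel ingredient at the same level as the Moser step is slightly misleading.
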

 \begin{proof}
  By the local boundedness property  (see Proposition \ref{localbdd} and Corollary \ref{corlocalbdd}), we have $u\in L^\infty_{\mathrm{loc}}(B_{2R_0})$.  Set the following: 
    \begin{align*}
    	r=R-4h_0 \quad\mbox{and } d\mu_{l,s}(x,y)=\frac{dxdy}{|x-y|^{N+ls}}, \quad\mbox{with }(l,s)\in \{ (p,s_1), (q,s_2)\}.
    \end{align*} 
  Fix $d\mu_1=d\mu_{(p,s_1)}$ and $d\mu_2=d\mu_{(q,s_2)}$. We take a test function $\phi\in W^{s_1,p}(B_R)\cap\ W^{s_2,q}(B_R)$ such that $\mathrm{supp}(\phi)\subset B_{(R+r)/2}$. Then, testing the equation with $\phi$ and $\phi_{-h}$, for $0<|h|<h_0$, and changing the variables, we obtain 
    \begin{align}\label{eqi1}
      &\int_{\mb R^{2N}} ([u_h(x)-u_h(y)]^{p-1}-[u(x)-u(y)]^{p-1})(\phi(x)-\phi(y))d\mu_1 \nonumber\\ 
      &+\int_{\mb R^{2N}} ([u_h(x)-u_h(y)]^{q-1}-[u(x)-u(y)]^{q-1})(\phi(x)-\phi(y))d\mu_2 \nonumber\\
      &=\int_{B_{2R_0}}\big( f(x+h)-f(x) \big)\phi(x)dx.
    \end{align}
 \end{proof}
 For $\ba\geq 1$ and $\nu>0$, we take 
  \begin{align*}
  	\phi:= \bigg[\frac{u_h-u}{|h|^\nu} \bigg]^\ba \eta^p=\bigg[\frac{\de_h u}{|h|^\nu}\bigg]^\ba \eta^p, \quad\mbox{for } 0<|h|<h_0,
  \end{align*}
 where $\eta\in C_c^2(\mb R^N)$ such that 
  \begin{align*}
  	\eta=1 \quad\mbox{in }B_r, \ \eta\equiv 0  \quad\mbox{in }B^c_{(R+r)/2}, \ \ 0\leq \eta \leq 1, \quad\mbox{and } |\na\eta|\leq\frac{C}{R-r}=C(4h_0)^{-1}.
  \end{align*}
 Now, dividing \eqref{eqi1} by $|h|>0$, for the above choice of $\phi$, we have 
  \begin{align*}
 	&\int_{\mb R^{2N}} \frac{([u_h(x)-u_h(y)]^{p-1}-[u(x)-u(y)]^{p-1})}{|h|^{1+\nu\ba}}\big([\de_hu(x)]^\ba \eta^p(x)-[\de_hu(y)]^\ba \eta^p(y) \big)d\mu_1 \nonumber\\ 
 	&+\int_{\mb R^{2N}} \frac{([u_h(x)-u_h(y)]^{q-1}-[u(x)-u(y)]^{q-1})}{|h|^{1+\nu\ba}}\big([\de_hu(x)]^\ba \eta^p(x)-[\de_hu(y)]^\ba \eta^p(y) \big)d\mu_2 \nonumber\\
 	&=\int_{B_{2R_0}}\frac{f(x+h)-f(x)}{|h|^{1+\nu\ba}} [\de_hu(x)]^\ba \eta^p(x)dx.
 \end{align*} 
 For $(l,s)\in \{(p,s_1),(q,s_2)\}$, we set the following:
  {\small \begin{align*}
  	&I_1(l,s)=\int_{B_R}\int_{B_R} \frac{([u_h(x)-u_h(y)]^{l-1}-[u(x)-u(y)]^{l-1})}{|h|^{1+\nu\ba}} \big([\de_hu(x)]^\ba \eta^p(x)-[\de_hu(y)]^\ba \eta^p(y) \big)d\mu, \\
  	&I_2(l,s)= \int_{B_{(R+r)/2}}\int_{\mb R^N\setminus B_R} \frac{([u_h(x)-u_h(y)]^{l-1}-[u(x)-u(y)]^{l-1})}{|h|^{1+\nu\ba}}[\de_hu(x)]^\ba \eta^p(x)d\mu, \\
  	&I_3(l,s)= -\int_{\mb R^N\setminus B_R}\int_{B_{(R+r)/2}} \frac{([u_h(x)-u_h(y)]^{l-1}-[u(x)-u(y)]^{l-1})}{|h|^{1+\nu\ba}}[\de_hu(y)]^\ba \eta^p(y)d\mu, \\
  	&I_4(f)=\int_{B_{R}}\frac{f(x+h)-f(x)}{|h|^{1+\nu\ba}} [\de_hu(x)]^\ba \eta^p(x)dx.
  \end{align*} }
 Therefore, we have 
  \begin{align}\label{eqi4}
  	I_1(p,s_1) \leq -I_1(q,s_2)+|I_2(p,s_1)|+|I_2(q,s_2)|+|I_3(p,s_1)|+|I_3(q,s_2)|+|I_4(f)|.
  \end{align}
 \textit{Step I}: Estimate of $I_1$.\\
 We observe that 
  \begin{align*}
 	[\de_hu(x)]^\ba \eta^p(x)-[\de_hu(y)]^\ba \eta^p(y) &= \big([\de_hu(x)]^\ba -[\de_hu(y)]^\ba \big) \frac{\eta^p(x)+\eta^p(y)}{2} \\
 	&\quad+ \big([\de_hu(x)]^\ba + [\de_hu(y)]^\ba \big) \frac{\eta^p(x)-\eta^p(y)}{2}.
 \end{align*} 
 Thus, 
 \begin{align*}
 	&([u_h(x)-u_h(y)]^{l-1}-[u(x)-u(y)]^{l-1}) \big([\de_hu(x)]^\ba \eta^p(x)-[\de_hu(y)]^\ba \eta^p(y) \big) \\
 	&\geq ([u_h(x)-u_h(y)]^{l-1}-[u(x)-u(y)]^{l-1}) \big([\de_hu(x)]^\ba -[\de_hu(y)]^\ba  \big) \frac{\eta^p(x)+\eta^p(y)}{2} \\
 	&\quad- |[u_h(x)-u_h(y)]^{l-1}-[u(x)-u(y)]^{l-1} | (|\de_hu(x)|^\ba +|\de_hu(y)|^\ba )\bigg|\frac{\eta^p(x)-\eta^p(y)}{2}\bigg|.
 \end{align*}
 Moreover, proceeding similarly to \cite[Proof of Proposition 4.1, pp. 814-815]{brascoH}, for $\e>0$, we obtain
{\small \begin{align*}
  &\big|[u_h(x)-u_h(y)]^{l-1}-[u(x)-u(y)]^{l-1} \big| (|\de_hu(x)|^\ba +|\de_hu(y)|^\ba )\bigg|\frac{\eta^p(x)-\eta^p(y)}{2}\bigg| \\
  &\leq \frac{2(l-1)}{l} \big( |u_h(x)-u_h(y)|^\frac{l-2}{2}+|u(x)-u(y)|^\frac{l-2}{2}  \big)
   \bigg| [u_h(x)-u_h(y)]^\frac{l}{2}-[u(x)-u(y)]^\frac{l}{2} \bigg| \\
   &\quad\times \big( |\de_hu(x)|^\ba+|\de_hu(y)|^\ba \big) \frac{\eta^\frac{p}{2}(x)+\eta^\frac{p}{2}(y)}{2} \big|\eta^\frac{p}{2}(x)-\eta^\frac{p}{2}(y)\big| \\
   & \leq \frac{C}{\e} \big( |u_h(x)-u_h(y)|^\frac{l-2}{2}+|u(x)-u(y)|^\frac{l-2}{2} \big)^2 ( |\de_hu(x)|^{\ba+1}+|\de_hu(y)|^{\ba+1} ) \big|\eta^\frac{p}{2}(x)-\eta^\frac{p}{2}(y)\big|^2 \\
   &+ C\e \big(|[u_h(x)-u_h(y)]^{l-1}-[u(x)-u(y)]^{l-1} |\big)([\de_hu(x)]^\ba-[\de_hu(y)]^\ba ) (\eta^p(x)+\eta^p(y)).
 \end{align*}}
 Therefore, for sufficiently small $\e>0$, using \cite[(A.5)]{brascoH}, we have
 \begin{align}\label{eqi2}
 	I_1 &\geq \frac{1}{C} \int_{B_R}\int_{B_R} \left[ \frac{([u_h(x)-u_h(y)]^{l-1}-[u(x)-u(y)]^{l-1})}{|h|^{1+\nu\ba}}([\de_hu(x)]^\ba -[\de_hu(y)]^\ba ) \right. \nonumber\\
 	& \left. \qquad\qquad\qquad\times (\eta^p(x)+\eta^p(y))d\mu \right] \nonumber\\
 	&-C \int_{B_R}\int_{B_R}\left[ \frac{|\de_hu(x)|^{\ba+1}+|\de_hu(y)|^{\ba+1}}{|h|^{1+\nu\ba}} \big[ |u_h(x)-u_h(y)|^\frac{l-2}{2}+|u(x)-u(y)|^\frac{l-2}{2} \big]^2 \right. \nonumber\\
 	& \left. \qquad\qquad\qquad\times \big|\eta^\frac{p}{2}(x)-\eta^\frac{p}{2}(y)\big|^2 \right] d\mu \nonumber\\
 	&\geq c \int_{B_R}\int_{B_R} \bigg| \frac{[\de_hu(x)]^\frac{\ba+l-1}{l}}{|h|^\frac{1+\nu\ba}{l}} - \frac{[\de_hu(y)]^\frac{\ba+l-1}{l}}{|h|^\frac{1+\nu\ba}{l}}\bigg|^l (\eta^p(x)+\eta^p(y))d\mu \nonumber\\
 	&- C \int_{B_R}\int_{B_R} \left[ \frac{|\de_hu(x)|^{\ba+1}+|\de_hu(y)|^{\ba+1}}{|h|^{1+\nu\ba}}\big( |u_h(x)-u_h(y)|^\frac{l-2}{2}+|u(x)-u(y)|^\frac{l-2}{2} \big)^2 \right. \nonumber\\
 	& \left. \qquad\qquad\qquad \times \big|\eta^\frac{p}{2}(x)-\eta^\frac{p}{2}(y)\big|^2 \right]d\mu. 
 \end{align}
 From the calculations of \cite[pp. 815-816]{brascoH}, we have
 \begin{align}\label{eqi3}
 	I_1(p)&\geq c \bigg[\frac{[\de_hu ]^\frac{\ba+p-1}{p}\eta}{|h|^\frac{1+\nu\ba}{p}}\bigg]^p_{W^{s_1,p}(B_R)}
 	- C \int_{B_R}\int_{B_R} \left[\frac{|\de_hu(x)|^{\ba+p-1}+|\de_hu(y)|^{\ba+p-1}}{|h|^{1+\nu\ba}} \right]|\eta(x)-\eta(y)|^p \nonumber\\
 	&\quad-C\int_{B_R} \int_{B_R} \left[ \frac{|\de_hu(x)|^{\ba+1}+|\de_hu(y)|^{\ba+1}}{|h|^{1+\nu\ba}}\big( |u_h(x)-u_h(y)|^\frac{p-2}{2}+|u(x)-u(y)|^\frac{p-2}{2} \big)^2 \right. \nonumber\\
 	&\qquad\qquad\qquad\left. \times \big|\eta^\frac{p}{2}(x)-\eta^\frac{p}{2}(y)\big|^2 \right]d\mu_1. 
 \end{align}
 Therefore, by observing that the first term on the right side of \eqref{eqi2} is non-negative and using \eqref{eqi3} in \eqref{eqi4}, we deduce that 
 \begin{align}\label{eqi5}
 	\bigg[\frac{[\de_hu ]^\frac{\ba+p-1}{p}\eta}{|h|^\frac{1+\nu\ba}{p}}\bigg]^p_{W^{s_1,p}(B_R)} \leq C \Big[I_{11}(p)+I_{12}(p)+I_{11}(q)+\sum_{l\in\{p,q\}} (|I_2(l)|+|I_3(l)|) +|I_4(f)|\Big],
 \end{align}
 where 
 \begin{align*}
  &I_{11}(l)= \int_{B_R} \int_{B_R} \left[ \frac{|\de_hu(x)|^{\ba+1}+|\de_hu(y)|^{\ba+1}}{|h|^{1+\nu\ba}}\big( |u_h(x)-u_h(y)|^\frac{l-2}{2}+|u(x)-u(y)|^\frac{l-2}{2} \big)^2 \right. \\ &\left.\qquad\qquad\qquad\times \big|\eta^\frac{p}{2}(x)-\eta^\frac{p}{2}(y)\big|^2 \right]d\mu\\
  &I_{12}(p)= \int_{B_R}\int_{B_R} \left( \frac{|\de_hu(x)|^{\ba+p-1}+|\de_hu(y)|^{\ba+p-1}}{|h|^{1+\nu\ba}} \right)|\eta(x)-\eta(y)|^p d\mu_1.
 \end{align*}
 \textit{Step II}: Estimates of $I_{11}$ and $I_{12}$.\\
 Using the bounds on $\eta$ and $|\na\eta|$, we deduce that
  \begin{align*}
  	\tl I_{11}:&=\int_{B_R} \int_{B_R} \frac{|\de_hu(x)|^{\ba+1}}{|h|^{1+\nu\ba}} |u(x)-u(y)|^{l-2} \big|\eta^\frac{p}{2}(x)-\eta^\frac{p}{2}(y)\big|^2 d\mu \nonumber\\
  	&\leq \frac{C}{h_0^2}\int_{B_R} \int_{B_R} \frac{|u(x)-u(y)|^{l-2}}{|x-y|^{N+sl-2}} \frac{|\de_hu(x)|^{\ba+1}}{|h|^{1+\nu\ba}} dxdy.
  \end{align*}
 For $l=2$, noticing the fact that $R<1$, we have 
  \begin{align}\label{eqi6}
  	\tl I_{11}\leq \frac{C(N,s)}{h_0^2} \| u \|_{L^\infty(B_{R+h_0})} \int_{B_R} \frac{|\de_hu(x)|^{\ba}}{|h|^{1+\nu\ba}}dx.
  \end{align}
 For $l>2$, we take $\e<\min\{\frac{q-2}{2},\frac{1}{s_1}-1\}>0$. Using Young's inequality (with exponents $m/(l-2)$ and $m/(m-l+2)$), we get
 \begin{align}\label{eqi7}
 	\tl I_{11}&\leq \frac{C}{h_0^2}\int_{B_R} \int_{B_R} \frac{|u(x)-u(y)|^{l-2}}{|x-y|^{N+sl-2}} \frac{|\de_hu(x)|^{\ba+1}}{|h|^{1+\nu\ba}} dxdy \nonumber\\
 	&\leq \frac{C}{h_0^2}\int_{B_R} \int_{B_R} \frac{|u(x)-u(y)|^{m}}{|x-y|^{N+ms\frac{(l-2-\e)}{l-2}}} + \left(\frac{C}{h_0}\right)^\frac{m}{m-l+2}\int_{B_R} \int_{B_R} |x-y|^{\frac{m(2-2s-\e s)}{m-l+2}-N} \frac{|\de_hu(x)|^{\frac{(\ba+1)m}{m-l+2}}}{|h|^\frac{(1+\nu\ba)m}{m-l+2}} \nonumber\\
 	&\leq C [u]^m_{W^{\frac{s(l-2-\e)}{l-2},m}(B_{R+h_0})} + C \| u \|^{\frac{m}{m-l+2}}_{L^\infty(B_{R+h_0})} \int_{B_R}  \frac{|\de_hu(x)|^{\frac{\ba m}{m-l+2}}}{|h|^\frac{(1+\nu\ba)m}{m-l+2}}dx, 
  \end{align}
 where we have used the fact $\frac{m(2-2s-\e s)}{m-l+2}>0$ and $C=C(N,s,l,h_0)>0$ is a constant (which depends inversely on $h_0$).  
 For the first term, using the Sobolev embedding result \cite[Lemma 2.6]{brascoH}, we have
  \begin{align}\label{eqi8}
  	[u]^m_{W^{\frac{s(l-2-\e)}{l-2},m}(B_{R+h_0})}\leq C \left[\sup_{0<|h|<h_0}\bigg\| \frac{\de^2_h u}{|h|^{s_1}}\bigg\|^m_{L^m(B_{R+4h_0})} + \| u \|^m_{L^\infty(B_{R+h_0})} \right].
  \end{align}
 Combining  \eqref{eqi7}, \eqref{eqi8} and \eqref{eqi6} together with Young's inequality, we obtain 
 \begin{align}\label{eqi9}
 	|I_{11}| \leq C K_1(u)\left[ \int_{B_R}  \frac{|\de_hu(x)|^{\frac{\ba m}{m-l+2}}}{|h|^\frac{(1+\nu\ba)m}{m-l+2}}dx + \sup_{0<|h|<h_0}\bigg\| \frac{\de^2_h u}{|h|^{s_1}}\bigg\|^m_{L^m(B_{R+4h_0})} +1 \right],
 \end{align}
 where $C=C(N,l,s,h_0)>0$ is a constant (which depends inversely on $h_0$) and 
 \[ K_1(u)=K_1(u,m,l,R_0)=\max\{ \| u \|^{\frac{m}{m-l+2}}_{L^\infty(B_{R_0})}, \| u \|^m_{L^\infty(B_{R_0})}, \| u \|_{L^\infty(B_{R_0})} \}>0. \]
For $I_{12}(p)$, we use the estimate of \cite[p. 819]{brascoH},
 \begin{align}\label{eqi10}
 	I_{12}=2\int_{B_R\times B_R}  \frac{|\de_hu(x)|^{\ba+p-1}}{|h|^{1+\nu\ba}} {|\eta(x)-\eta(y)|^p} d\mu \leq C  \left[ \int_{B_R}  \frac{|\de_hu(x)|^{\frac{\ba m}{m-l+2}}}{|h|^\frac{(1+\nu\ba)m}{m-l+2}}dx +\| u \|^{\frac{m(p-1)}{p-2}}_{L^\infty(B_{R+h_0})} \right], 
 \end{align}
 where $C>0$ is a constant appeared as before.
 
 \textit{Step III}: Estimates of $I_2$, $I_3$ and $I_4$.\\
 To estimate $I_2$ and $I_3$, we observe that
  \begin{align}\label{eqi11}
  	\big|& ([u_h(x)-u_h(y)]^{l-1}-[u(x)-u(y)]^{l-1})[\de_hu(x)]^\ba \big| \nonumber\\
  	 &\qquad\leq C(l)\big( \|u\|^{l-1}_{L^\infty(B_{R_0})}+ |u_h(y)|^{l-1}+|u(y)|^{l-1} \big)|\de_hu(x)|^\ba.
  \end{align}
 For the first term, by observing $B_{(R-r)/2}(x)\subset B_R$, whenever $x\in B_{(r+R)/2}$, 
 \begin{align}\label{eqi12}
 	\int_{\mb R^N\setminus B_R} \frac{dy}{|x-y|^{N+sl}}\leq \int_{\mb R^N\setminus B_\frac{R-r}{2}(x)} \frac{dy}{|x-y|^{N+sl}}\leq C(N,s,l,h_0),
 \end{align}
 where we have used $R-r=4h_0$, and $C(N,s,l,h_0)>0$ depends inversely on $h_0$. For the other terms, we use \cite[Lemmas 2.2, 2.3]{brascoH}, and the fact that $4h_0=R-r<R \leq R_0<1$. Thus
 \begin{align}\label{eqi13}
  	\int_{\mb R^N\setminus B_R} \frac{|u(y)|^{l-1}}{|x-y|^{N+sl}}dy
  	&\leq \frac{1}{(R-r)^{N+sl}} \left[ R^{N+sl}\int_{\mb R^N\setminus B_{R_0}}\frac{|u(y)|^{l-1}}{|y|^{N+sl}}dy+ \int_{B_{R_0}}|u(y)|^{l-1}dy \right] \nonumber\\
  	&\leq C(N,l,s,h_0) \big( \|u\|^{l-1}_{L^\infty(B_{R_0})} + T_{l-1}(u;R_0)^{l-1} \big),
 \end{align} 
 where $T_{l-1}$ is the tail as defined in definition \ref{defTail}.
 The estimate of $I_4$ is straightforward. Indeed, by noticing the fact that $f\in L^\infty_{\mathrm{loc}}(B_{2R_0})$ and $B_{R+h_0}\Subset B_{2R_0}$, we obtain
  \begin{align}\label{eqif}
  	|I_4(f)|\leq 2\|f\|_{L^\infty(B_{R+h_0})}\int_{B_{R}}\frac{|\de_hu(x)|^\ba \eta^p(x)}{|h|^{1+\nu\ba}} dx \leq 2\|f\|_{L^\infty(B_{R+h_0})}\int_{B_{(R+r)/2}}\frac{|\de_hu(x)|^\ba }{|h|^{1+\nu\ba}} dx. 
  \end{align} Therefore, combining \eqref{eqi11}, \eqref{eqi12}, \eqref{eqi13}, \eqref{eqif}, and then using Young's inequality, we deduce that
 \begin{align}\label{eqi14}
 	|I_2|+|I_3|+|I_4|&\leq C \big( \|u\|^{l-1}_{L^\infty(B_{R_0})} + T_{l-1}(u;R_0)^{l-1}+\|f\|_{L^\infty(B_{R+h_0})} \big) \int_{B_{(R+r)/2}}\frac{|\de_h u(x)|^\ba}{|h|^{1+\nu\ba}}dx \nonumber\\
 	&\leq C \big( \|u\|^{l-1}_{L^\infty(B_{R_0})} + T_{l-1}(u;R_0)^{l-1}+\|f\|_{L^\infty(B_{R_0})} \big) \left[1+ \int_{B_R}  \frac{|\de_hu(x)|^{\frac{\ba m}{m-l+2}}}{|h|^\frac{(1+\nu\ba)m}{m-l+2}}dx \right].
 \end{align}
\textit{Step IV}: Conclusion.\\
 Using the estimates \eqref{eqi9}, \eqref{eqi10} and \eqref{eqi14} in \eqref{eqi5}, for $l\in \{p,q\}$, we obtain  
 {\small\begin{align}\label{eqi15a}
 	\bigg[\frac{[\de_hu ]^\frac{\ba+p-1}{p}\eta}{|h|^\frac{1+\nu\ba}{p}}\bigg]^p_{W^{s_1,p}(B_R)} 
 	\leq CK_2(u,m) \left[ \sum_{l}\int_{B_R} \frac{|\de_hu(x)|^{\frac{\ba m}{m-l+2}}}{|h|^\frac{(1+\nu\ba)m}{m-l+2}}dx + \sup_{0<|h|<h_0}\bigg\| \frac{\de^2_h u}{|h|^{s_1}}\bigg\|^m_{L^m(B_{R+4h_0})}+1 \right],
 \end{align}}
 where $C= C(N,l,s,h_0)>0$ is a constant as appeared before, and \begin{align}\label{eqK(u)} 
 	K_2(u,m,R_0):=1+T_{p-1}(u;R_0)^{p-1}+T_{q-1}(u;R_0)^{q-1}+ \| u \|^{\frac{m(p-1)}{p-2}}_{L^\infty(B_{R_0})}+\|f\|_{L^\infty(B_{R_0})} >0.
 \end{align}
 Note that we have suppressed the term $R_0$ when it is not important (observe that $K_2\geq K_1$). For the term corresponding to $l=q$ in \eqref{eqi15a}, applying Young's inequality (with exponents $\frac{m-q+2}{m-p+2}$ and $\frac{m-q+2}{p-q}$), we get
 {\small\begin{align}\label{eqi15}
 	\bigg[\frac{[\de_hu ]^\frac{\ba+p-1}{p}\eta}{|h|^\frac{1+\nu\ba}{p}}\bigg]^p_{W^{s_1,p}(B_R)} 
 	\leq CK_2(u,m) \left[ \int_{B_R} \frac{|\de_hu(x)|^{\frac{\ba m}{m-p+2}}}{|h|^\frac{(1+\nu\ba)m}{m-p+2}}dx + \sup_{0<|h|<h_0}\bigg\| \frac{\de^2_h u}{|h|^{s_1}}\bigg\|^m_{L^m(B_{R+4h_0})}+1 \right].
 \end{align}}
 Let $\xi,h\in\mb R^N$ such that $0<|\xi|,|h|<h_0$. Then, from \cite[(4.11), p. 821]{brascoH} and using the discrete Leibniz rule, we have
 \begin{align}\label{eqi16}
 	\int_{B_r} \frac{|\de_\xi \de_h u|^{\ba-1+p}}{|\xi|^{s_1p}|h|^{1+\nu\ba}} 
 	&\leq C \bigg\| \eta\frac{\de_\xi}{|\xi|^{s_1}}\left( \frac{[\de_h u]^\frac{\ba-1+p}{p}}{|h|^\frac{1+\nu\ba}{p}} \right) \bigg\|^p_{L^p(\mb R^N)} \nonumber\\
 	&\leq C \bigg\| \frac{\de_\xi}{|\xi|^{s_1}}\left( \frac{[\de_h u]^\frac{\ba-1+p}{p}\eta}{|h|^\frac{1+\nu\ba}{p}} \right) \bigg\|^p_{L^p(\mb R^N)} + C \bigg\| \frac{\de_\xi\eta}{|\xi|^{s_1}} \frac{\big([\de_h u]^\frac{\ba-1+p}{p}\big)_\xi}{|h|^\frac{1+\nu\ba}{p}}  \bigg\|^p_{L^p(\mb R^N)},
 \end{align}
where $C=C(p,\ba)>0$ is a constant. To estimate the first term, we use \cite[Proposition 2.6]{brascoHS} (with $r=(R+r)/2$ and $R=R$ there). Therefore,
\begin{align}\label{eqi17}
	\sup_{\xi>0} \bigg\| \frac{\de_\xi}{|\xi|^{s_1}}\left( \frac{[\de_h u]^\frac{\ba-1+p}{p}\eta}{|h|^\frac{1+\nu\ba}{p}} \right) \bigg\|^p_{L^p(\mb R^N)} 
	\leq C(N,h_0,s_1,p) \bigg[\frac{[\de_hu ]^\frac{\ba+p-1}{p}\eta}{|h|^\frac{1+\nu\ba}{p}}\bigg]^p_{W^{s_1,p}(B_R)}.
\end{align} 
 For the second term in \eqref{eqi16}, we proceed similar to \cite[(4.14), p. 821]{brascoH},
 \begin{align}\label{eqi18}
 	\bigg\| \frac{\de_\xi\eta}{|\xi|^{s_1}} \frac{\big([\de_h u]^\frac{\ba-1+p}{p}\big)_\xi}{|h|^\frac{1+\nu\ba}{p}}  \bigg\|^p_{L^p(\mb R^N)}
 	&\leq C \| u\|^{p-1}_{L^\infty(B_{R_0})} \int_{B_R} \frac{|\de_h u|^\ba}{|h|^{1+\nu\ba}}dx \nonumber\\
 	&\leq C K_2(u,m) \left[ 1+ \int_{B_R} \frac{|\de_hu(x)|^{\frac{\ba m}{m-p+2}}}{|h|^\frac{(1+\nu\ba)m}{m-p+2}}dx \right].
 \end{align}
 From \eqref{eqi16}, \eqref{eqi17} and \eqref{eqi18} with $\xi=h$, and then using \eqref{eqi15}, we obtain 
 {\small\begin{align*}
 	\sup_{0<|h|<h_0}\int_{B_r} \frac{| \de^2_h u|^{\ba-1+p}}{|h|^{1+s_1p+\nu\ba}}dx
 	\leq C K_2(u,m) \sup_{0<|h|<h_0} \left[ \int_{B_R} \frac{|\de_hu(x)|^{\frac{\ba m}{m-p+2}}}{|h|^\frac{(1+\nu\ba)m}{m-p+2}}dx + \bigg\| \frac{\de^2_h u}{|h|^{s_1}}\bigg\|^m_{L^m(B_{R+4h_0})}+1  \right],
 \end{align*}}
 where the quantities $C=C(N,h_0,p,m,s_1,\ba)>0$ and $K_2(u,m)$ are as appeared before. Using \cite[Lemma 2.6]{brascoH}, for the first term on the above expression, we get
 {\small\begin{align}\label{eqi19}
 	\sup_{0<|h|<h_0}\int_{B_r} \frac{| \de^2_h u|^{\ba-1+p}}{|h|^{1+s_1p+\nu\ba}}dx
 	\leq C K_2(u,m) \sup_{0<|h|<h_0} \left[   \bigg\| \frac{\de^2_h u}{|h|^{\nu+\frac{1}{\ba}}}\bigg\|^{\frac{\ba m}{m-p+2}}_{L^{\frac{\ba m}{m-p+2}}(B_{R+4h_0})}  + \bigg\| \frac{\de^2_h u}{|h|^{s_1}}\bigg\|^m_{L^m(B_{R+4h_0})}+1  \right],
 \end{align}}
where we are using $(1+\nu\ba)/\ba<1$. Next, we specify the symbols $\ba$ and $\nu$ as below:
 \begin{align*}
 	\ba= m-p+2, \quad \nu= \frac{(m-p+2)s_1-1}{m-p+2}.
 \end{align*}
Then, it is easy to observe that
 \begin{align*}
 	\frac{1+s_1p+\nu\ba}{\ba-1+p}=s_1+\frac{s_1}{m+1}, \ \ \ba-1+p=m+1, \ \ \frac{m\ba}{m-p+2}=m, \ \ \frac{1+\nu\ba}{\ba}=s_1.
 \end{align*} 
 Therefore, on account of \eqref{eqi19}, we have
 \begin{align*}
 	\sup_{0<|h|<h_0}  \bigg\| \frac{\de^2_h u}{|h|^{s_1+\frac{s_1}{m+1}}} \bigg\|^{m+1}_{L^{m+1}(B_{r})} 
 	\leq C K_2(u,m) \sup_{0<|h|<h_0} \left[   \bigg\| \frac{\de^2_h u}{|h|^{s_1}}\bigg\|^m_{L^m(B_{R+4h_0})}+1  \right].
 \end{align*}
 By noting $r=R-4h_0$ and $|h|<h_0$, the above expression implies that (up to modifying the constant $C$)
 \begin{align*}
 	\sup_{0<|h|<h_0}  \bigg\| \frac{\de^2_h u}{|h|^{s_1}} \bigg\|^{m+1}_{L^{m+1}(B_{R-4h_0})} 
 	\leq C K_2(u,m) \sup_{0<|h|<h_0} \left[   \bigg\| \frac{\de^2_h u}{|h|^{s_1}}\bigg\|^m_{L^m(B_{R+4h_0})}+1  \right],
 \end{align*}
 where $C= C(N,h_0,p,m,s_1)>0$ depends inversely on $h_0$ and $K_2(u,m)$ is given by \eqref{eqK(u)}. \QED
 \begin{Corollary}\label{corimpint}
 Let $u$, $f$, $R_0$ and $h_0$ be as in proposition \ref{propin1}. Let $\nu\in (0,1)$ and $\ba>1$ are such that $(1+\nu\ba)/\ba<1$. Further assume that $u\in C^{0,s_1-\e}_{\rm loc}(\Om)$, for all $\e\in (0,s_1)$, and 
 \begin{align*}
 	\sup_{0<|h|<h_0} \bigg\| \frac{\de^2_h u}{|h|^{(1+\nu\ba)/\ba}}\bigg\|_{L^\ba(B_{R_0})} <\infty.
 \end{align*}
 Then, for every $4h_0<R\leq R_0 - 5h_0$, we have 
 \begin{align*}
 	\sup_{0<|h|<h_0} \bigg\| \frac{\de^2_h u}{|h|^\frac{1+s_1p+\nu\ba}{\ba+p-1}}\bigg\|^{\ba+p-1}_{L^{\ba+p-1}(B_{R-4h_0})} 
 	\leq C K_2(u,m)^{p-1} \left( \sup_{0<|h|<h_0} \bigg\| \frac{\de^2_h u}{|h|^\frac{1+\nu\ba}{\ba}}\bigg\|^\ba_{L^\ba(B_{R+4h_0})} +1 \right),
 \end{align*}
 where $C= C(N,h_0,p,m,s_1)>0$ (which depends inversely on $h_0$) and $K_2(u,m)$ is given by \eqref{eqK(u)}. 
 \end{Corollary}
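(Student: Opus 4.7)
Corollary \ref{corimpint} is essentially a restatement of the core iterative estimate derived inside the proof of Proposition \ref{propin1}, but with the parameters $\ba > 1$ and $\nu \in (0,1)$ kept generic (subject only to $(1+\nu\ba)/\ba < 1$) instead of being specialized to $\ba = m-p+2$ and $\nu = ((m-p+2)s_1-1)/(m-p+2)$ at the end of that proof. My plan is therefore to retrace Steps I--IV of the proof of Proposition \ref{propin1} with these free parameters and to use the a priori H\"older hypothesis $u \in C^{0,s_1-\e}_{\rm loc}(\Om)$ (valid for all $\e \in (0,s_1)$) to replace the Besov-type embedding (Lemma 2.6 of \cite{brascoH}) that was invoked there.

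\textbf{Key steps.} First, I would repeat Steps I--III of Proposition \ref{propin1} essentially verbatim: these estimates produce the Caccioppoli-type inequality \eqref{eqi5} for the seminorm associated to $[\de_h u]^{(\ba+p-1)/p}\eta/|h|^{(1+\nu\ba)/p}$, and none of those computations depends on the particular values of $\ba$ or $\nu$. Second, I would apply the discrete Leibniz rule together with \cite[Proposition 2.6]{brascoHS}, exactly as in the passage from \eqref{eqi15} to \eqref{eqi19}, to convert this $W^{s_1,p}$ seminorm into an $L^{\ba+p-1}$ estimate for $\de^2_h u/|h|^{(1+s_1p+\nu\ba)/(\ba+p-1)}$ on $B_r=B_{R-4h_0}$. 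The right-hand side thereby features the two auxiliary quantities $\|\de^2_h u/|h|^{\nu+1/\ba}\|^{\ba m/(m-p+2)}_{L^{\ba m/(m-p+2)}(B_{R+4h_0})}$ and $\|\de^2_h u/|h|^{s_1}\|^m_{L^m(B_{R+4h_0})}$.

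\textbf{Reducing to the hypotheses.} The H\"older continuity of $u$ furnishes the pointwise control $|\de^2_h u(x)|\leq C\|u\|_{C^{0,s_1-\e}(B_{R_0})}|h|^{s_1-\e}$, which provides an $L^\infty$-bound absorbable into $K_2(u,m)$. Interpolating this uniform bound with the hypothesis $\sup_{0<|h|<h_0}\|\de^2_h u/|h|^{(1+\nu\ba)/\ba}\|_{L^\ba(B_{R_0})}<\infty$ through H\"older's and Young's inequalities, and choosing $\e$ small enough so that $s_1-\e>(1+\nu\ba)/\ba$ (which is legitimate exactly because $(1+\nu\ba)/\ba<1$), reduces both of the above right-hand side terms to the single Besov-type quantity $\|\de^2_h u/|h|^{(1+\nu\ba)/\ba}\|^\ba_{L^\ba(B_{R+4h_0})}$, at the price of absorbing constants which can be grouped into a factor of $K_2(u,m)^{p-1}$.

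\textbf{Main obstacle.} The delicate point is the exponent bookkeeping in the interpolation: one must arrange that the combined $L^\infty$ and $L^\ba$ bounds reproduce precisely the Lebesgue exponents $\ba m/(m-p+2)$ and $m$ that appear on the right-hand side of the intermediate inequality, and that the $L^\infty$ factors compound to the power $p-1$ of $K_2(u,m)$ announced in the statement. This last point hinges on the fact that one $K_2(u,m)$ factor already comes from Steps I--III of Proposition \ref{propin1}, while additional $K_2(u,m)^{p-2}$ factors emerge when bounding the mixed $L^{\ba m/(m-p+2)}$ term using the $L^\infty$ H\"older bound $p-2$ times. Once these factors are recombined via Young's inequality, the corollary follows.
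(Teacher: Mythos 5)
Your plan diverges from the paper's argument at the crucial point, and the divergence introduces a genuine gap. You propose to replay Steps I--III of Proposition \ref{propin1} verbatim (Young-inequality splitting included), which would reproduce a right-hand side with the two Besov-type quantities of \eqref{eqi19}, namely $\sup_h\|\de^2_h u/|h|^{\nu+1/\ba}\|^{\ba m/(m-p+2)}_{L^{\ba m/(m-p+2)}}$ and $\sup_h\|\de^2_h u/|h|^{s_1}\|^m_{L^m}$, and then eliminate both by interpolating against the $L^\infty$ bound coming from the H\"older hypothesis. That is not what the paper does, and the interpolation does not close. The paper instead uses the a priori bound $[u]_{C^{s_1-\e}(B_R)}\le C K_2(u)$ \emph{inside} the double integral $\tl I_{11}$, dominating $|u(x)-u(y)|^{l-2}$ by $K_2(u)^{l-2}|x-y|^{(l-2)(s_1-\e)}$ so that the kernel $|x-y|^{(l-2)(s_1-\e)+2-N-sl}$ becomes locally integrable (here $\e$ is chosen with $(l-2)(s_1-\e)+2-sl>0$). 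This bypasses the Young-inequality splitting entirely: $I_{11}$, $I_{12}$, $I_2$, $I_3$, $I_4$ are all controlled by $K_2(u)^{p-2}\int_{B_R}|\de_h u|^\ba/|h|^{1+\nu\ba}$, and one application of \cite[Lemma 2.6]{brascoH} yields the stated estimate with $K_2(u,m)^{p-1}$. No interpolation between different Lebesgue/fractional-order exponents is needed.

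Concretely, your interpolation step is contradictory. To control the first quantity you need $\|\de^2_h u/|h|^{\nu+1/\ba}\|_{L^\infty}$ uniformly bounded in $h$, i.e. $(1+\nu\ba)/\ba\le s_1-\e$, which is the condition you invoke. But this is \emph{not} implied by $(1+\nu\ba)/\ba<1$ (one also has $s_1<1$), and in the actual iteration (Step II of the proof of Theorem \ref{impintreg}) one finds $(1+\nu_i\ba_i)/\ba_i\ge s_1$ for every $i$, with equality at $i=0$, so your requirement already fails there. Worse, to absorb the second quantity $\|\de^2_h u/|h|^{s_1}\|^m_{L^m}$ by the same $L^\infty$--$L^\ba$ interpolation, one must have the $|h|$-power $|h|^{(1+\nu\ba-s_1\ba)-\e(m-\ba)}$ bounded as $|h|\to0$, which forces the \emph{opposite} inequality $(1+\nu\ba)/\ba>s_1$. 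These two requirements on $(1+\nu\ba)/\ba$ cannot hold simultaneously, so the proposed reduction of both extra right-hand-side terms to the single $\|\de^2_h u/|h|^{(1+\nu\ba)/\ba}\|^\ba_{L^\ba}$ quantity does not go through. The fix is to use the H\"older bound at the level of the kernel, as the paper does, rather than post hoc on the conclusion of Proposition \ref{propin1}.
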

\begin{proof}
 Fix $\e\in (0,\min\{ s_1, s_1+\frac{2-s_1p}{p-2}\})$. Thus, for all  $R\in (0,1)$ and $x_0\in\mb R^N$ such that $B_R(x_0)\Subset\Om$, we have 
 \begin{align}\label{eqi32}
 	[u]_{C^{s_1-\e}(B_R(x_0))}\leq CK_2(u).
 \end{align}
  Using \eqref{eqi32}, and the bounds on $\eta$ and $|\na\eta|$, for $(l,s)\in\{(p,s_1), (q,s_2)\}$, we deduce that
 \begin{align*}
 	\tl I_{11}:&=\int_{B_R}  \frac{|u(x)-u(y)|^{l-2}}{|x-y|^{N+sl}} \big|\eta^\frac{p}{2}(x)-\eta^\frac{p}{2}(y)\big|^2 dy \nonumber\\
 	&\leq CK_2(u)^{l-2}\int_{B_R} |x-y|^{(l-2)(s_1-\e)+2-N-sl}dy<\infty,
 \end{align*} 
 where we have used the fact $(l-2)(s_1-\e)+2-sl>0$. Consequently, we have
 \begin{align*}
 	|I_{11}(l)|\leq C K_2(u)^{l-2} \| u \|_{L^\infty(B_{R+h_0})} \int_{B_R}\frac{|\de_h u(x)|^\ba}{|h|^{1+\nu\ba}}dx.
 \end{align*}
 On a similar note,
 \begin{align*}
 	|I_{12}|+|I_2|+|I_3|+|I_4|\leq C K_2(u)^{l-1} \int_{B_R}\frac{|\de_h u(x)|^\ba}{|h|^{1+\nu\ba}}dx.
 \end{align*}
 Coupling these with \eqref{eqi5} and using in \eqref{eqi17} (for $\xi=h$ there), we obtain
 \begin{align*}
 	\sup_{0<|h|<h_0} \int_{B_r}\frac{|\de^2_h u|^{\ba-1+p}}{|h|^{1+s_1p+\nu\ba}}dx
 	\leq C K_2(u,m)^{p-2} \sup_{0<|h|<h_0}\int_{B_R}\frac{|\de_h u(x)|^\ba}{|h|^{1+\nu\ba}}dx.
 \end{align*}
 Therefore, employing \cite[Lemma 2.6]{brascoH}, we get
 \begin{align*}
 	\sup_{0<|h|<h_0} \int_{B_{R-4h_0}}\frac{|\de^2_h u|^{\ba-1+p}}{|h|^{1+s_1p+\nu\ba}}dx
 	\leq C K_2(u,m)^{p-1} \left(\sup_{0<|h|<h_0} \int_{B_{R+4h_0}}\frac{|\de^2_h u(x)|^\ba}{|h|^{1+\nu\ba}}dx+1\right).
 \end{align*}
This proves the corollary. \QED
\end{proof}
 
 Now, we prove our interior H\"older regularity result.\\
 \textbf{Proof of Theorem \ref{impintreg}}: For simplicity in notation, we take $x_0=0$. By using the local boundedness property (Proposition \ref{localbdd} and Corollary \ref{corlocalbdd}), we have $u\in L^\infty_{\mathrm{loc}}(\Om)$. We proceed as below.\\
 \textbf{Step I}: We first prove $u\in C^{0,\sg}_{\rm loc}(\Om)$ for all $\sg\in (0,s_1)$.\\
  Since $\sg\in (0,s_1)$, there exists $i_\infty\in\mb N$ such that 
  \begin{align*}
 	s_1-\sg > \frac{N}{p+i_\infty}.
  \end{align*}
Set $h_0=\frac{\bar R_0}{64 i_\infty} $ and define the following sequences:
  \begin{align*}
	m_i=p+i, \quad\mbox{and } R_i= \frac{7\bar R_0}{8}-4(2i+1)h_0 \quad\mbox{for } i=0,\dots,i_\infty.
  \end{align*} 
  We then observe that 
  \[ R_0+4h_0=\frac{7\bar R_0}{8}, \quad R_i-4h_0=R_{i+1}+4h_0, \quad R_{i_\infty-1}-4h_0=\frac{3\bar R_0}{4} \quad\mbox{and }4h_0< R_i \leq \bar R_0-5h_0. \]
 Thus, by applying Proposition \ref{propin1} for the following choices:
 \[ R=R_i, \quad m=m_i=p+i \quad\mbox{for } i=0,\dots,i_\infty, \]
 we obtain
 {\small\begin{align*}
 	& \sup_{0<|h|<h_0}  \bigg\| \frac{\de^2_h u}{|h|^{s_1}} \bigg\|_{L^{m_{i+1}}(B_{R_{i+1}+4h_0})} 
 	\leq C K_2(u) \left[ \sup_{0<|h|<h_0}  \bigg\| \frac{\de^2_h u}{|h|^{s_1}}\bigg\|_{L^{m_i}(B_{ R_i+4h_0})}+1  \right],
 	\end{align*}} 
 for $i=0,\dots,i_\infty-2$, and
 {\small\begin{align*}
 	&\sup_{0<|h|<h_0}  \bigg\| \frac{\de^2_h u}{|h|^{s_1}} \bigg\|_{L^{m_{i_\infty}}(B_{(3\bar R_0)/4})} 
 	\leq C K_2(u) \left[ \sup_{0<|h|<h_0}  \bigg\| \frac{\de^2_h u}{|h|^{s_1}} \bigg\|_{L^{m_{i_\infty-1}}(B_{R_{i_\infty-1}+4h_0})} +1  \right].
 \end{align*}}
 We note that $\de_h u =\frac{\de_{2h}u-2\de_h u}{2}$. Therefore, applying \cite[Proposition 2.6]{brascoHS}, we obtain 
 \begin{align*}
 	\sup_{0<|h|<h_0}\bigg\| \frac{\de^2_h u}{|h|^{s_1}}\bigg\|_{L^{p}(B_{(7\bar R_0)/8})} 
 	&\leq C \sup_{0<|h|<2h_0}\bigg\| \frac{\de_h u}{|h|^{s_1}}\bigg\|_{L^{p}(B_{(7\bar R_0)/8})} \\
 	&\leq C \Big( [u]_{W^{s_1,p}(B_{\frac{7\bar R_0}{8}+2h_0})} + \| u \|_{L^{\infty}(B_{\frac{7\bar R_0}{8}+2h_0})}    \Big). 
 \end{align*}
From the definition of $K_2(u,m)$ (see \eqref{eqK(u)} with $R_0=\bar R_0$), it is clear that
 $$K_2(u,m)\leq 2K_2 (u,m_1) \quad \mbox{and } \| u \|_{L^\infty(B_{\bar R_0})}\leq K_2(u,m), \quad\mbox{for all }m_1\geq m.$$ 
  Thus, from the above, we conclude that 
 \begin{align*}
 	\sup_{0<|h|<h_0}\bigg\| \frac{\de^2_h u}{|h|^{s_1}}\bigg\|_{L^{p}(B_{(7\bar R_0)/8})} \leq C K_2(u,m)  \big( [u]_{W^{s_1,p}(B_{\bar R_0})} + 1 \big).
 \end{align*}
 Iterating the above process (the process is finite), we get
 \begin{align}\label{eqi21}
 	\sup_{0<|h|<h_0}\bigg\| \frac{\de^2_h u}{|h|^{s_1}}\bigg\|_{L^{m_{i_\infty}}(B_{(3\bar R_0)/4})} \leq \big(C K_2(u,m_{i_\infty})\big)^{i_{\infty}}  \big( [u]_{W^{s_1,p}(B_{\bar R_0})} + 1 \big).
 \end{align}
 We take $\psi\in C_c^\infty(B_{(5\bar R_0)/8})$ such that 
\[ 0\leq \psi \leq 1, \quad \psi=1 \mbox{ in } B_{\bar R_0/2}, \quad |\na\psi|, |\na^2\psi|\leq C. \]
This implies that 
\[ \frac{|\de_h \psi|}{|h|^{s_1}}, \frac{|\de^2_h \psi|}{|h|^{s_1}}\leq C \quad\mbox{for all }|h|>0. \]
Then, using the discrete Leibniz rule on $\de^2_h$, for $0<|h|<h_0$, we obtain
\begin{align}\label{eqi20}
	[u\psi]_{\mc B^{s_1,m_{i_\infty}}_{\infty}(\mb R^N)}&= \sup_{|h|>0} \bigg\| \frac{\de^2_h (u\psi)}{|h|^{s_1}}\bigg\|_{L^{m_{i_\infty}}(\mb R^N)} \nonumber\\
	&\leq C \sup_{|h|>0} \left[\bigg\| \frac{\psi_{2h}\de^2_h u}{|h|^{s_1}}\bigg\|_{L^{m_{i_\infty}}(\mb R^N)} + \bigg\| \frac{\de_h\psi\de_h u}{|h|^{s_1}}\bigg\|_{L^{m_{i_\infty}}(\mb R^N)} + \bigg\| \frac{u\de^2_h \psi}{|h|^{s_1}}\bigg\|_{L^{m_{i_\infty}}(\mb R^N)}  \right] \nonumber\\
	&\leq C  \left[ \sup_{0<|h|<h_0}\bigg\| \frac{\de^2_h u}{|h|^{s_1}}\bigg\|_{L^{m_{i_\infty}}(B_{(3\bar R_0)/4})} + \| u \|_{L^{m_{i_\infty}}(B_{(3\bar R_0)/4})} \right].
\end{align}
 Therefore, using \cite[Lemma 2.4]{brascoH} together with \eqref{eqi20} and \eqref{eqi21}, we obtain
  \begin{align*}
  	[u\psi]_{\mc N^{s_1,m_{i_\infty}}_{\infty}(\mb R^N)} \leq C(N,s_1,m) [u\psi]_{\mc B^{s_1,m_{i_\infty}}_{\infty}(\mb R^N)} \leq (C K_2(u,m_{i_\infty})\big)^{i_\infty}  \big( [u]_{W^{s_1,p}(B_{\bar R_0})} + 1 \big).
  \end{align*}
 Since $s_1 m_{i_\infty}>N$ and $\sg<s_1-N/m_{i_\infty}$, employing the embedding result of \cite[Theorem 2.8]{brascoH}, we get
  \begin{align*}
  	[u]_{C^\sg(B_r)} \leq [u]_{C^\sg(B_{\frac{\bar R_0}{2}})} 
  	&\leq C \big( [u\psi]_{\mc N^{s_1,m_{i_\infty}}_{\infty}(\mb R^N)} \big)^\frac{\sg m_{i_\infty}+N}{s_1 m_{i_\infty}} \big( \| u\psi \|_{L^{m_{i_\infty}}(\mb R^N)} \big)^\frac{(s_1-\sg) m_{i_\infty}-N}{s_1 m_{i_\infty}} \\ 
  	&\leq (C K_2(u,m_{i_\infty})\big)^{i_\infty} \big( [u]_{W^{s_1,p}(B_{\bar R_0})} + 1 \big)^{1+N/(s_1m_{i_\infty})} 
  	\\
  	&\leq (C K_2(u,m_{i_\infty})\big)^{i_\infty} \big( [u]_{W^{s_1,p}(B_{\bar R_0})} + 1 \big)^{2-\sg/s_1},
  \end{align*}
 where we have used the fact that $\| u \|_{L^\infty(B_{\bar R_0})}\leq K_2(u,m_{i_\infty})$ and $\sg m_{i_\infty}+N<s_1 m_{i_\infty}$. This completes the proof for the case $\sg\in (0,s_1)$. \\ 
 \textbf{Step II}: We prove $u\in C^{0,\sg}_{\rm loc}(\Om)$ for all $\sg\in (0,\min\{1,\frac{ps_1}{p-1}\})$.\\
We proceed similar to \cite[Theorem 5.2]{brascoH}, and define the following iterative sequences:
 \begin{align*}
 	\ba_0=p, \ \ba_{i+1}=\ba_i+p-1=p+i(p-1); \ \nu_0 = s_1-\frac{1}{p}, \ \nu_{i+1}=\frac{\nu_i\ba_i+s_1p}{\ba_{i+1}}.
 \end{align*}
 It is easy to observe that $\lim_{i\to\infty} \nu_i = \frac{s_1p}{p-1}$. Then, we have the following two cases.\\
 \textit{Case (i)}: If $s_1p\leq p-1$.\\
 Fix $\sg\in (0,\frac{s_1p}{p-1})$ and choose $i_\infty\in\mb N$ such that 
 $ \sg<\nu_{i_\infty}-\frac{N-1}{\ba_{i_\infty}}$.
 Set 
 \begin{align*}
 	h_0=\frac{\bar R_0}{64i_\infty}, \quad\mbox{and } R_i= \frac{7\bar R_0}{8}-4(2i+1)h_0 \quad\mbox{for } i=0,\dots,i_\infty.
 \end{align*} 
 From step I, we have $u\in C^{0,\sg}_{\rm loc}(\Om)$. Therefore by applying corollary \ref{corimpint}, for $\ba=\ba_i$, $\nu=\nu_i$ and $R=R_i$, we obtain
 {\small\begin{align*}
	 \sup_{0<|h|<h_0}  \bigg\| \frac{\de^2_h u}{|h|^{\nu_{i+1}+\frac{1}{\ba_{i+1}}}} \bigg\|_{L^{\ba_{i+1}}(B_{R_{i+1}+4h_0})} 
		\leq C K_2(u) \left[ \sup_{0<|h|<h_0}  \bigg\| \frac{\de^2_h u}{|h|^{\nu_i+\frac{1}{\ba_i}}}\bigg\|_{L^{\ba_i}(B_{ R_i+4h_0})}+1  \right], 
 \end{align*}}
 for $i=0,\dots,i_\infty-2$, and 
 {\small\begin{align*}
		\sup_{0<|h|<h_0}  \bigg\| \frac{\de^2_h u}{|h|^{\nu_{i_\infty}+\frac{1}{\ba_{i_\infty}}}} \bigg\|_{L^{\ba_{i_\infty}}(B_{(3\bar R_0)/4})} 
		\leq C K_2(u) \left[ \sup_{0<|h|<h_0}  \bigg\| \frac{\de^2_h u}{|h|^{\nu_{i_\infty-1}+\frac{1}{\ba_{i_\infty-1}}}} \bigg\|_{L^{\ba_{i_\infty-1}}(B_{R_{i_\infty-1}+4h_0})} +1  \right].
\end{align*}}
Now, proceeding similarly to the proof of theorem \ref{impintreg}, we obtain that $u\in C^{0,\sg}(B_{\bar R_0/2}(x_0))$.\\
\textit{Case (ii)}: If $s_1p>(p-1)$.\\
 The proof, in this case, runs analogously to \cite[Proof of Theorem 5.2, pp. 831-833]{brascoH}.  \QED

 \subsection{Boundary regularity and maximum principle}
 In this subsection, we prove the boundary behavior of the weak solutions. For this, we first establish the asymptotic behavior of the fractional $p$-Laplacian of different powers of the distance function.  
 For $\rho>0$, we define the following extension of the distance function:
 \begin{align*}
 	d_e(x)=\begin{cases}
 		{\rm dist}(x,\pa\Om) &\mbox{if }x\in\Om,\\
 		-{\rm dist}(x,\pa\Om) &\mbox{if }x\in(\Om^c)_{\rho},\\
 		-\rho &\mbox{otherwise},
 	\end{cases}
 \end{align*}
 where $(\Om^c)_{\rho}:=\{x\in\Om^c : \mathrm{dist}(x,\pa\Om)<\rho \}$. Next, for  $\al,\rho>0$ and $\ka\ge 0$, we set 
 \begin{align*}
 	\ov w_{\rho}(x)=\begin{cases}
 		(d_e(x)+\ka^{1/\al})_+^\al \quad &\mbox{if }x\in\Om\cup(\Om^c)_\rho, \\
 		0 \qquad&\mbox{otherwise}.
 	\end{cases}
 \end{align*}
 \begin{Theorem}\cite[Theorem 3.3]{arora}\label{sup}
  There exists $\ka_1,\varrho_6>0$ such that for all $\ka\in[0,\ka_1)$ and $\al\in (0,s_1)$, there exists a positive constant $C_5$ such that for all $\varrho\in(0,\varrho_6)$:
 	\begin{align*}
 		(-\De)_p^{s_1}\ov w_\rho \geq C_5 (d+\ka^{1/\al})^{-(ps_1-\al(p-1))} \quad\mbox{weakly in }\Om_\varrho.
 	\end{align*}
  Further, for all $\ka>0$ and $\al\in (0,s_1)$, $\ov w_\rho \in \widetilde{W}^{s_1,p}(\Om_{\varrho_6})$.
 \end{Theorem}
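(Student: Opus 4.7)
The plan is to reduce the asserted lower bound to a half-space model computation and to transfer that model to the curved boundary via a $C^{1,1}$ flattening. Fix $x_0 \in \Om_\varrho$ with $\varrho$ small and let $y_0 \in \pa\Om$ be the nearest boundary point, so $d(x_0) = |x_0 - y_0|$. Because $\pa\Om$ is $C^{1,1}$, there exist $r_0 > 0$ and a $C^{1,1}$ diffeomorphism $\Phi$ defined on $B_{r_0}(y_0)$ that straightens $\pa\Om$ to the hyperplane $\{x_N = 0\}$, maps the outward normal to $e_N$, and satisfies $d(x) = (\Phi(x))_N + O(|\Phi(x)|^2)$. Under this chart the shifted distance $d + \ka^{1/\al}$ is comparable to the half-space analogue $(x_N + \ka^{1/\al})$, and this comparability will govern the whole argument.

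The model half-space computation is the crux. For the function $V(x) := (x_N + \ka^{1/\al})_+^\al$ on $\mb R^N$, a scaling/symmetrization argument reduces the evaluation of $(-\De)_p^{s_1} V$ to a one-dimensional integral that can be written as
\begin{align*}
(-\De)_p^{s_1} V(x) = \mathfrak{C}(N, p, s_1, \al)\,(x_N + \ka^{1/\al})^{\al(p-1) - p s_1},
\end{align*}
valid for $x_N > 0$, with $\mathfrak{C}(N, p, s_1, \al) > 0$ precisely when $\al \in (0, s_1)$. The sign follows by splitting the principal value into symmetric pieces around the plane $\{y_N = x_N\}$ and exploiting convexity/concavity of $t \mapsto t^\al$: in the range $\al < s_1$ the positive part of the integrand dominates. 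Since $\al(p-1) - p s_1 < 0$, this model lower bound blows up as $x_N \to 0^+$, which is precisely the behavior we want to reproduce for $\bar w_\rho$.

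To transfer the estimate, I split $(-\De)_p^{s_1} \bar w_\rho(x_0)$ into a near-field part, integrated over $\Phi^{-1}(B_r(\Phi(x_0)))$ for some $r \in (0, r_0)$ depending on $d(x_0) + \ka^{1/\al}$, and a far-field remainder. On the near-field part I compare $\bar w_\rho \circ \Phi^{-1}$ with the model $V$, bound the difference by $C\,|\Phi(x) - \Phi(y)|^{\al+1}$ using the $C^{1,1}$ geometry and the Lipschitz bound on $t \mapsto t^\al$ away from zero, and check that the resulting error is of order $(d(x_0) + \ka^{1/\al})^{\al(p-1) - p s_1 + \tau}$ for some $\tau > 0$, hence strictly lower order than the main term. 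The far-field remainder is controlled by observing that either $\bar w_\rho(y) = 0$ (contributing with the correct sign since $[\bar w_\rho(x_0) - \bar w_\rho(y)]^{p-1} \geq 0$) or $|x_0 - y| \gtrsim r_0$ so the kernel is bounded; in both cases the contribution is absorbed by a fraction of $\mathfrak{C}$ upon shrinking $\varrho_6$ and $\ka_1$. A covering of a neighborhood of $\pa\Om$ by such charts then yields the bound weakly in the whole of $\Om_\varrho$.

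The Sobolev inclusion $\bar w_\rho \in \widetilde{W}^{s_1,p}(\Om_{\varrho_6})$ for $\ka > 0$ is then essentially routine: since $d \in C^{1,1}$ in a neighborhood of $\pa\Om$ and $\ka^{1/\al} > 0$ keeps $\bar w_\rho$ away from its singular set, $\bar w_\rho$ is locally Lipschitz and bounded with compact support in $\mb R^N$, so its Gagliardo seminorm on any bounded superdomain is finite and the tail integral $\int (1+|y|)^{-N-s_1 p} |\bar w_\rho|^{p-1}\,dy$ is trivially finite. The main obstacle is the error control in the transfer step: one must show that the $C^{1,1}$ flattening error and the far-field contributions are genuinely of lower order than $(d + \ka^{1/\al})^{\al(p-1) - p s_1}$, uniformly in $\ka \in [0, \ka_1)$, without losing the positivity of the constant $\mathfrak{C}$. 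This is delicate because the nonlinearity $[\cdot]^{p-1}$ does not obey a simple superposition principle, so the error must be estimated through the monotonicity inequalities for the $p$-Laplacian kernel rather than by a direct difference.
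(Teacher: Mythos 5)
This statement is quoted from \cite[Theorem~3.3]{arora}; the present paper does not contain a proof of it, so there is no in-paper argument to compare against line by line. The closest analogue actually proved in the paper is Lemma~\ref{lemA2}, which establishes the companion estimate for $(-\De)_q^{s_2}$ of the same barrier. Your overall architecture — compute the exact half-space model $(-\De)_p^{s_1}\big((x_N+\ka^{1/\al})_+^\al\big)=\mathfrak{C}(x_N+\ka^{1/\al})^{\al(p-1)-ps_1}$ with $\mathfrak{C}>0$ precisely for $\al<s_1$, then transfer to the curved boundary via a $C^{1,1}$ flattening and a near/far splitting — is exactly the strategy of \cite{arora} (inherited from \cite{iann}), and the observations about scaling, translation-invariance, and the $\ka>0$ Sobolev inclusion via local Lipschitz regularity on a thin superdomain are correct in substance.

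Where your sketch goes wrong is in the mechanics of the transfer step, and this is not a cosmetic point. You propose to use an \emph{approximate} flattening with $d(x)=(\Phi(x))_N+O(|\Phi(x)|^2)$ and then ``compare $\bar w_\rho\circ\Phi^{-1}$ with the model $V$'' using ``the Lipschitz bound on $t\mapsto t^\al$ away from zero.'' That Lipschitz bound degenerates as $d+\ka^{1/\al}\to 0^+$, which is precisely where the estimate must be proved, and more fundamentally it is not the mechanism the known proof uses. The standard chart is chosen so that $d(\Phi(X))=(X_N)_+$ holds \emph{exactly} (see the construction in the proof of Lemma~\ref{lemA2}, and \cite[Theorem~3.1]{arora}): after the change of variables the barrier $\bar w_\rho\circ\Phi$ \emph{is} the model $V$, and all of the error sits in the kernel/Jacobian distortion $h(X,Y)$, which satisfies $|h(X,Y)|\le C\min\{|X-Y|,1\}$. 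That extra factor of $|X-Y|$, combined with the global $\al$-H\"older bound $|V(X)-V(Y)|\le|X-Y|^\al$ (not a Lipschitz bound), is what makes the error of strictly lower order in $d+\ka^{1/\al}$. You correctly identify at the end that this error control is the delicate part and that $[\cdot]^{p-1}$ has no superposition principle, but your proposed route (function comparison plus Lipschitz control away from zero) does not close this gap, and you do not actually produce the $\tau>0$ you claim. As written, the proposal has the right skeleton but the crucial quantitative estimate is missing and the specific tool you reach for is the wrong one.
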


 \begin{Lemma}\label{lemA2}
  Let $\al\in [s_2,1)$ be such that $\al\neq q' s_2$. Then, there exist $\ka_2,\varrho_3>0$ such that for all $\ka\in[0,\ka_2)$ and $\varrho\in(0,\varrho_3)$: 
  $$(-\De)_q^{s_2} \ov w_\rho=h \quad\mbox{weakly in }\Om_{\varrho},$$ 
  for some $h\in L^\infty(\Om_{\varrho_3})$ (which is independent of $\ka\in (0,1)$).
 \end{Lemma}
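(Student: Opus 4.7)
I would follow the strategy of \cite[Theorem 3.3]{arora} (stated here as Theorem \ref{sup}), replacing $(-\De)_p^{s_1}$ by $(-\De)_q^{s_2}$ and extracting a two-sided pointwise estimate rather than a one-sided one. Fix $x\in\Om_\varrho$ with $\varrho$ small, and let $\bar x\in\pa\Om$ satisfy $d(x)=|x-\bar x|$. Using the $C^{1,1}$ regularity of $\pa\Om$, I introduce a local diffeomorphism $\Phi$ that straightens $\pa\Om$ in a neighborhood of $\bar x$, with $d_e\circ\Phi^{-1}(y',y_N)=y_N+O(|y|^2)$. This reduces the pointwise computation of $(-\De)_q^{s_2}\ov w_\rho(x)$ to that of the half-space model $u(y)=(y_N+\ka^{1/\al})_+^\al$, modulo curvature corrections.

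In the half-space, the substitution $\tl y=y+\ka^{1/\al}e_N$ followed by the rescaling $\tl y=(x_N+\ka^{1/\al})z$ yields the scaling identity
\[ (-\De)_q^{s_2}u(x)=C(N,q,s_2,\al)\,(x_N+\ka^{1/\al})^{\al(q-1)-qs_2}, \]
where $C(N,q,s_2,\al)$ is an explicit principal-value integral over $z\in\mb R^N$. The hypothesis $\al\neq q's_2$ is exactly what guarantees the scaling exponent $\al(q-1)-qs_2$ is nonzero, so that $C$ is finite with no logarithmic correction. With $\al\in[s_2,1)$ and $\ka\in[0,\ka_2)$, the resulting power of $(x_N+\ka^{1/\al})$ is controlled uniformly on $\Om_{\varrho_3}$, giving a pointwise bound independent of $\ka$.

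To pass from the half-space to the curved domain, I split $(-\De)_q^{s_2}\ov w_\rho(x)$ into three pieces: (i) a local annular piece on $B_{d(x)/2}(x)$, where $\ov w_\rho$ is $C^{1,1}$ with explicit gradient and Hessian bounds in terms of $d(x)+\ka^{1/\al}$, so that a second-order Taylor expansion contributes at most $C\,d(x)^{2-qs_2}$ (bounded since $qs_2<2$); (ii) a chart piece on $\Phi^{-1}(B_\eta(0))\setminus B_{d(x)/2}(x)$, evaluated via the half-space formula after pull-back through $\Phi$, with the $O(|y|^2)$ boundary perturbation absorbed into a bounded correction; and (iii) a far-field tail on $\mb R^N\setminus\Phi^{-1}(B_\eta(0))$, controlled by $\|\ov w_\rho\|_{L^\infty(\mb R^N)}\leq(\mathrm{diam}(\Om)+1)^\al$ together with $|x-y|\geq\eta/2$. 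Integrating the resulting pointwise identity against $\phi\in C_c^\infty(\Om_\varrho)$ via Fubini, and using $\ov w_\rho\in\widetilde{W}^{s_1,p}(\Om_{\varrho_6})$ (which by the embedding in the Function Spaces subsection gives $\ov w_\rho\in\widetilde{W}^{s_2,q}$), produces the weak identity $(-\De)_q^{s_2}\ov w_\rho=h$ in $\Om_\varrho$ for a $\ka$-uniformly bounded $h$.

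The main obstacle is Step~2: verifying that the PV integral $C(N,q,s_2,\al)$ is absolutely convergent under the sole hypothesis $\al\neq q's_2$, and then tracking how the $C^{1,1}$ curvature correction to $d_e$ propagates through the nonlinear kernel $[\cdot]^{q-1}$ without amplifying the size of the bound. In the regime $q\geq 2$ used in the applications, the kernel is locally Lipschitz, so the $O(|y|^2)$ perturbation enters only as a lower-order factor in $d(x)$, and aggregation of the three contributions yields the $\ka$-uniform $L^\infty$ bound. A secondary delicate point is matching the range of~$\ka$ (where $\ka^{1/\al}$ is small compared to $\varrho_3$) with the chart radius~$\eta$, so that the extension region $(\Om^c)_\rho$ is entirely captured by the local flattening and the tail estimate in (iii).
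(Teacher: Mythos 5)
The global skeleton of your plan (flatten the boundary with a $C^{1,1}$ chart, reduce to the one-dimensional profile $(y_N+\ka^{1/\al})_+^\al$, treat the far field separately) matches the paper's strategy, but the central step is misidentified, and as written the argument would not close.

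\textbf{The local annulus estimate is wrong.} You claim that on $B_{d(x)/2}(x)$ the function $\ov w_\rho$ is $C^{1,1}$ with bounds ``in terms of $d(x)+\ka^{1/\al}$'' and that the resulting Taylor contribution is $\leq C\,d(x)^{2-qs_2}$, bounded. But $\ov w_\rho=(d_e+\ka^{1/\al})_+^\al$ with $\al<1$ has $|\na\ov w_\rho|\sim(d+\ka^{1/\al})^{\al-1}$ and $|D^2\ov w_\rho|\sim(d+\ka^{1/\al})^{\al-2}$, both degenerate as $\ka\to 0$, $d\to 0$. Carrying these through the second-order Taylor cancellation in the kernel $[u(x)-u(y)]^{q-1}$ yields a local contribution of order $(d(x)+\ka^{1/\al})^{\al(q-1)-qs_2}$, \emph{not} $d(x)^{2-qs_2}$. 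Since $\al(q-1)-qs_2<0$ precisely when $\al<q's_2$ — which is allowed by the hypotheses $\al\in[s_2,1)$, $\al\neq q's_2$ whenever $q\geq 2$ — the local piece, estimated crudely, blows up as $\ka\to 0$. The boundedness of $h$ therefore cannot follow from annulus-by-annulus size estimates; it requires an exact cancellation between the local and far regions. This cancellation is what the paper actually proves in its Case~(ii): after flattening, it reduces to the one-dimensional integral $g_\e$, decomposes its domain into three pieces $L_1,L_2,L_3$ with a carefully chosen asymmetric cut point $\tfrac{(x+\ka^{1/\al})^2}{x+\ka^{1/\al}-\e}-\ka^{1/\al}$, and applies $t\mapsto 1/t$ to fold the two far intervals of $L_3$ onto the same $t$-range; the resulting integrand has an explicit antiderivative that cancels $L_1$. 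Your plan has no counterpart to this step.

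\textbf{The scaling identity does not play the role you assign it.} For the untruncated half-space profile, the integral defining your constant $C(N,q,s_2,\al)$ converges at infinity only for $\al<q's_2$; for $\al>q's_2$ it diverges, so ``$C$ finite'' cannot be the meaning of $\al\neq q's_2$. The paper uses $\al\neq q's_2$ differently, as a case split: when $\al>q's_2$ the $\al$-H\"older regularity of $\ov w_\rho\circ\Phi$ already beats the kernel singularity and gives boundedness with no cancellation at all (Case~(i)); when $\al<q's_2$ the H\"older estimate fails and the delicate PV cancellation just described is what saves the day (Case~(ii)). And even when $C$ is finite ($\al<q's_2$), a nonvanishing $C$ would make $(-\De)_q^{s_2}\ov w_\rho$ scale like $(d+\ka^{1/\al})^{\al(q-1)-qs_2}$, unbounded as $\ka\to 0$; so a scaling identity with $C\neq 0$ would actually contradict the lemma, and your plan does not explain why the relevant constant is controlled.

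In short: the localization and chart machinery are fine, but the heart of the lemma — producing a $\ka$-uniform $L^\infty$ bound in the regime $s_2\le\al<q's_2$ — is not addressed, and the estimate you propose for the local annulus fails exactly in that regime.
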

 \begin{proof}
  Since $\Om$ has the boundary of the type $C^{1,1}$, as in \cite[Theorem 3.1]{arora}, for every $x\in\pa\Om$, there exit a neighborhood $N_x$ of $x$ and a bijective map $\Psi_x: E\to N_x$ satisfying 
 	\begin{align*}
 	  \Psi_x\in C^{1,1}(\ov E), \ \Psi_x^{-1}\in C^{1,1}(\ov N_x), \ \Psi_x(E_+)=N_x\cap \Om, \ \Psi_x(E_0)=N_x\cap \pa\Om,
 	\end{align*}
  where $E:=\{X=(X',X_N) : |X'|<1, |X_N|<1 \}$, $E_+:=E\cap \mb R^{N}_+$ and $E_0:=E\cap \{X_N=0\}$.\\
  Since $\pa\Om$ is compact, there exists a finite covering $\{B_{r_i}(x_i)\}_{i\in I}$ of $\pa\Om$ such that 
 	\begin{align*}
 	 \Om_{\varrho_3} \subset \cup_{i\in I}B_{r_i}(x_i) \quad \mbox{and } \Psi_x^{-1}(B_{r_i}(x_i))\subset B_{2\rho}(0), \mbox{ for all }i\in I,
 	\end{align*}  
  where $\varrho_3,\rho>0$ are small enough. Further, there exist diffeomorphisms $\Phi_i\in C^{1,1}(\mb R^N,\mb R^N)$ such that, for all $i\in I$, $\Phi_i = \Psi_{x_i}$ in $B_{2\rho}(0)$ and $\Phi_i= Id$ in $B_{4\rho}(0)^c$ together with 
 	\begin{align*}
 	 \Om_{\varrho_3}\cap B_{r_i}(x_i)\Subset \Phi_i(B^+_{\rho}), \quad d_e(\Phi_i(X))= (X_N+\ka^{1/\al})_+-\ka^{1/\al} \mbox{ for all }X\in B_{2\rho},
 	\end{align*}
 and for $\ka$ small enough such that $\ka^{1/\al}<\rho$, 
 \begin{align*}
 	\Phi_i \big( B_\rho \cap \{ X_N\geq - \ka^{1/\al} \} \big) \subset \Om \cup (\Om^c)_\rho,
 \end{align*}
  where $B_\rho=B_{\rho}(0)$ and $B^+_{\rho}= B_\rho\cap\mb R_{+}^N$. By the finite covering argument, it is sufficient to consider one fixed $i\in I$ and for simplicity we take $x_i=0$, $\Phi_i=\Phi$ and $\Phi(0)=0$. To prove the claim of the lemma, we will prove that 
 	\begin{align*}
 		h_\e(x) =\int_{D_\e(x)^c} \frac{[\ov w_\rho(x)-\ov w_\rho(y)]^{q-1}}{|x-y|^{N+qs_2}}dy, 
 	\end{align*}
  where $D_\e(x) = \{ y \ : \ |\Phi^{-1}(x)-\Phi^{-1}(y)|\leq \e\}$, converges to some $h$ in $L^1_{\mathrm{loc}}(\Om_{\varrho_3}\cap B_{r_i})$. We change the variable $X=\Phi^{-1}(x)$ and noting that $X\in B^+_{\rho}$ for any $x\in B_{r_i}\cap\Om_{\varrho_3}$, we get 
 	\begin{align*}
 	  h_\e(x)&=\int_{B_\e(X)^c} \frac{[\ov w_\rho(\Phi(X))-\ov w_\rho(\Phi(Y))]^{q-1}}{|\Phi(X)-\Phi(Y)|^{N+qs_2}}J_\Phi(Y)dY \\
 	  &= \left( \int_{B_\e(X)^c\cap B_{2\rho}}+ \int_{B_\e(X)^c\cap (B_{4\rho}\setminus B_{2\rho})} \right) \frac{[\ov w_\rho(\Phi(X))-\ov w_\rho(\Phi(Y))]^{q-1}}{|\Phi(X)-\Phi(Y)|^{N+qs_2}}J_\Phi(Y)dY \\
 	  & \quad + \int_{B_{4\rho}^c} \frac{[\ov w_\rho(\Phi(X))-\ov w_\rho(Y)]^{q-1}}{|\Phi(X)-\Phi(Y)|^{N+qs_2}}J_\Phi(Y)dY \\
 	  &=: \tl I_{\e}(X)+I_{\e}(X)+ I_2(X),
 	\end{align*}
 where $J_\Phi(Y)=|\mathrm{det} \na\Phi(Y)|$. We first estimate the quantity $I_2(X)$. Note that $\ov w_\rho=0$ in $(\Om\cup(\Om^c)_\rho)^c$, and elsewhere it is bounded (the bound is independent of $\ka\in (0,1)$), therefore
 	\begin{align*}
 	  I_2(X) &= \int_{B_{4\rho}^c\cap(\Om\cup(\Om^c)_\rho)} \frac{[\ov w_\rho(\Phi(X))-\ov w_\rho(Y)]^{q-1}}{|\Phi(X)-\Phi(Y)|^{N+qs_2}}J_\Phi(Y) dY \\
 	  &\quad+ \int_{B_{4\rho}^c\cap(\Om\cup(\Om^c)_\rho)^c} \frac{[\ov w_\rho(\Phi(X))^{s_1}]^{q-1}}{|\Phi(X)-\Phi(Y)|^{N+qs_2}}J_\Phi(Y)dY \\
 	  &\leq  C_{\Om,\Phi} \int_{B_{4\rho}^c} \frac{dY}{|X-Y|^{N+qs_2}}=: C(N,q,\Phi,\Phi^{-1},\rho)<\infty,
 	\end{align*}
  where in the last inequality we have used the Lipschitz continuity of $\Phi^{-1}$ together with the fact that the map $X\mapsto |\mathrm{det}\na\Phi(X)|$ is bounded in $\mb R^N$. Consequently, $I_2(X)$ is bounded. On a similar note, using the fact that $\ov w_\rho\circ\Phi$ is bounded in $B_{4\rho}$ and $J_\Phi(\cdot)$ is bounded in $\mb R^N$, we obtain that $I_\e(X)$ is bounded in $X$ (and the bound is independent of $\e$ and $\ka$). \\
  Thus, it remains to estimate only $\tl I_\e(X)$. We distinguish the following two cases.\\
 \textbf{Case (i)}: If $\al>q' s_2$.\\
  We see that the map $\ov w_\rho\circ \Phi$ is $\al$-H\"older continuous, and $J_\Phi(\cdot)$ is bounded in $\mb R^N$. Therefore, on account of the Lipschitz continuity of $\Phi^{-1}$, we deduce that
 	\begin{align*}
 	 \tl I_{\e}(X)\leq C_{\Om,\Phi} \int_{B_\e(X)^c\cap B_{2\rho}} \frac{1}{|X-Y|^{N+q(s_2-\al)+\al}}dY \leq C(N,q,\Phi,\Phi^{-1},\rho)<\infty,
 	\end{align*}
 because of $\al> q' s_2$. \\ 
 \textbf{Case (ii)}: If $\al< q' s_2$.\\
 Set $h(X,Y)= \frac{|\na\Phi(X)(X-Y)|^{N+qs_2}}{|\Phi(X)-\Phi(Y)|^{N+qs_2}}J_\Phi(Y)-J_\Phi(X)$, for $X\neq Y$. Then, using the $\al$-H\"older continuity for the map $X\mapsto (X_N+\ka^{1/\al})^\al_+$,
  we obtain
 	\begin{align}\label{eq120}
 	 \tilde{I_{\e}}(X)
 	  &= \int_{B_\e(X)^c\cap B_{2\rho}}  \frac{[(X_N+\ka^{1/\al})_+^{\al}-(Y_N+\ka^{1/\al})_+^{\al}]^{q-1}}{|\na\Phi(X)(X-Y)|^{N+qs_2}}h(X,Y) dY \nonumber\\
 	  &\quad + \int_{B_\e(X)^c\cap B_{2\rho}} \frac{[(X_N+\ka^{1/\al})_+^{\al}-(Y_N+\ka^{1/\al})_+^{\al}]^{q-1}}{|\na\Phi(X)(X-Y)|^{N+qs_2}}J_\Phi(X)dY \nonumber\\
 	  &\leq C |B_{2\rho}|^{(\al-s_2)(q-1)} \int_{B_\e(X)^c} \frac{|X-Y|^{s_2(q-1)} |h(X,Y)|}{|\na\Phi(X)(X-Y)|^{N+qs_2}}dY \nonumber\\
 	  &\quad+  \int_{B_\e(X)^c\cap B_{2\rho}} \frac{[(X_N+\ka^{1/\al})_+^{\al}-(Y_N+\ka^{1/\al})_+^{\al}]^{q-1}}{|\na\Phi(X)(X-Y)|^{N+qs_2}} J_\Phi(X) dY \nonumber \\
 	  &=:\tl I_{1,\e}(X)+\tl I_{2,\e}(X).
 	\end{align}
  We observe that $\tl I_{1,\e}(X)$ is finite due to \cite[(3.9)]{iann}. Indeed, from \cite[(3.7)]{iann}, we have $|h(X,Y)|\leq C(\Phi)\min\{|X-Y|,1\}$, consequently
  \begin{align}\label{eq123}
   \tl I_{1,\e}(X)&\leq C(\Phi,\Phi^{-1})|B_{2\rho}|^{(\al-s_2)(q-1)} \int_{B_\e(X)^c} \frac{\min\{|X-Y|,1\}}{|X-Y|^{N+s_2}}dY \nonumber\\
   &\leq C(\Phi,\Phi^{-1},\rho,N,q,s_2,\al)(\e^{1-s_2}+1)<\infty.
  \end{align} 
 For the second term in \eqref{eq120}, we have
 	\begin{align}\label{eq121}
 	 \tl I_{2,\e}(X) &=\int_{B_\e(X)^c\cap B_{2\rho}} \frac{[(X_N+\ka^{1/\al})_+^{\al}-(Y_N+\ka^{1/\al})_+^{\al}]^{q-1}}{|\na\Phi(X)(X-Y)|^{N+qs_2}} J_\Phi(X) dY \nonumber\\
 	 &= \left(\int_{B_\e(X)^c}  - \int_{B_\e(X)^c\cap B_{2\rho}^c} \right) \frac{[(X_N+\ka^{1/\al})_+^{\al}-(Y_N+\ka^{1/\al})_+^{\al}]^{q-1}}{|\na\Phi(X)(X-Y)|^{N+qs_2}} J_\Phi(X) dY \nonumber\\
 	 &=: \tl I_{3,\e}(X)+ \tl I_{4,\e}(X).
 	\end{align} 
  Using the $\al$-H\"older continuity of the map $X\mapsto (X_N+\ka^{1/\al})_+^{\al}$ and the Lipschitz nature of $\na\Phi^{-1}$ together with the boundedness of $J_\Phi(\cdot)$, we obtain 
 	\begin{align}\label{eq122}
 	  |\tl I_{4,\e}(X)| &\leq \int_{B_{2\rho}^c}  \frac{|(X_N+\ka^{1/\al})_+^{\al}-(Y_N+\ka^{1/\al})_+^{\al}|^{q-1}}{|\na\Phi(X)(X-Y)|^{N+qs_2}} |J_\Phi(X)| dY \nonumber\\
 	  &\leq C \int_{B_{\rho}(X)^c} |X-Y|^{-N-qs_2+\al(q-1)}dY =: C(\Phi,\Phi^{-1},\rho)<\infty,
 	\end{align}
 where we have used the fact that $X\in B_\rho$ and $\al<q's_2$. Therefore, to conclude that $\tl I_{\e}(X)$ is bounded in $X$, it is sufficient to prove that $\tl I_{3,\e}(X)$ is bounded in $X$ (use \eqref{eq123}, \eqref{eq121} and \eqref{eq122} in \eqref{eq120}). \\
 \textbf{Claim}:  $\tl I_{3,\e}$ is bounded uniformly in $X$.\\
 To prove the claim, we will first show that the following $1$-dimensional integral: 
  	\begin{align*}
 	 g_\e(x):=\int_{B_\e(x)^{c}} \frac{[(x+\ka^{1/\al})_+^{\al}-(y+\ka^{1/\al})_+^{\al}]^{q-1}}{|x-y|^{1+qs_2}}dy, \quad\mbox{for }x>0,
 	\end{align*}
 converges to $0$ uniformly in $K$, for all $K\Subset\mb R^+$, as $\e\to 0$. Note that for any $K\Subset\mb R^+$, there exists $\rho\in (0,1)$ such that $K\subset (\rho,\rho^{-1})$. For $x\in K$ and 
  any $\e\in\mb R$ with $\e\in(0,x)$, we have $-\ka^{1/\al}<x-\e<x+\e<\frac{(x+\ka^{1/\al})^2}{x+\ka^{1/\al}-\e}-\ka^{1/\al}$. Therefore, 
 	\begin{align}\label{eq126}
 	 g_\e(x) &= \int_{-\infty}^{-\ka^{1/\al}} \frac{(x+\ka^{1/\al})^{\al(q-1)}}{|x-y|^{1+qs_2}}dy + \int_{x+\e}^{\frac{(x+\ka^{1/\al})^2}{x+\ka^{1/\al}-\e}-\ka^{1/\al}} \frac{[(x+\ka^{1/\al})^{\al}-(y+\ka^{1/\al})_+^{\al}]^{q-1}}{|x-y|^{1+qs_2}}dy \nonumber\\
 	 &\quad+\left( \int_{-\ka^{1/\al}}^{x-\e}+\int_{\frac{(x+\ka^{1/\al})^2}{x+\ka^{1/\al}-\e}-\ka^{1/\al}}^{\infty}  \right)\frac{[(x+\ka^{1/\al})^{\al}-(y+\ka^{1/\al})_+^{\al}]^{q-1}}{|x-y|^{1+qs_2}}dy \nonumber\\
 	 &=: L_1(x)+L_2(\e,x)+L_3(\e,x).
 	\end{align}
 	By straightforward integrations, we have
 	\begin{align}\label{eq127}
 		L_1(x)= \int_{-\infty}^{-\ka^{1/\al}} \frac{(x+\ka^{1/\al})^{\al(q-1)}}{|x-y|^{1+qs_2}}dy = \frac{(x+\ka^{1/\al})^{\al(q-1)-qs_2}}{qs_2}.
 	\end{align}
  Moreover, since $\al<q's_2$ (that is $q(\al-s_2)-\al<0$), and using the $\al$-H\"older nature of the map $x\mapsto (x+\ka^{1/\al})_+^{\al}$,
 	\begin{align}\label{eq128}
 	 |L_2(x,\e)| &\leq C \int_{x+\e}^{\frac{(x+\ka^{1/\al})^2}{x+\ka^{1/\al}-\e}-\ka^{1/\al}} (y-x)^{q(\al-s_2)-\al-1} dy \nonumber\\
 	 &= C_2 \frac{(x+\ka^{1/\al})^{q(\al-s_2)-\al}}{\al-q(\al-s_2)} \frac{(x+\ka^{1/\al})^{\al-q(\al-s_2)}-(x+\ka^{1/\al}-\e)^{\al-q(\al-s_2)}}{\e^{\al-q(\al-s_2)}}.
 	\end{align} 
  Next, to estimate $L_3$, we substitute $t=\frac{y+\ka^{1/\al}}{x+\ka^{1/\al}}$, and then letting $t\mapsto1/t$ in the second integral, we deduce that
 	\begin{align*}
 	 L_3(\e,x) &=(x+\ka^{1/\al})^{q(\al-s_2)-\al}\left[ \int_{0}^{1-\frac{\e}{x+\ka^{1/\al}}}\frac{(1-t^{\al})^{q-1}}{(1-t)^{1+qs_2}}dt- \int_{1+\frac{\e}{x+\ka^{1/\al}-\e}}^{\infty}  \frac{(t^\al-1)^{q-1}}{|t-1|^{1+qs_2}}dt \right] \\
 	 &= (x+\ka^{1/\al})^{q(\al-s_2)-\al} \left[ \int_{0}^{1-\frac{\e}{x+\ka^{1/\al}}}  \frac{(1-t^{\al})^{q-1}}{(1-t)^{1+qs_2}}(1-t^{-q(\al-s_2)+\al-1})dt \right].
 	\end{align*}
  Noticing $0<s_2<\al<1$ (with $\al<q's_2$) and $t\in(0,1)$, we see that the above integrand is non-positive. Therefore,
 	\begin{align}\label{eq129}
 	 L_3(\e,x) &= (x+\ka^{1/\al})^{q(\al-s_2)-\al} \left[ \int_{0}^{1-\frac{\e}{x+\ka^{1/\al}}}  \frac{(1-t^{\al})^{q-1}}{(1-t)^{1+qs_2}}(1-t^{-q(\al-s_2)+\al-s_2+s_2-1})dt \right] \nonumber\\
 	 &\leq (x+\ka^{1/\al})^{q(\al-s_2)-\al}  \left[ \int_{0}^{1-\frac{\e}{x+\ka^{1/\al}}}  \frac{(1-t^{s_2})^{q-1}}{(1-t)^{1+qs_2}}(1-t^{s_2-1})dt  \right] \nonumber\\
 	 &=(x+\ka^{1/\al})^{q(\al-s_2)-\al}  \left[ \frac{1}{qs_2}\frac{(1-t^{s_2})^q}{(1-t)^{qs_2}}\right]_{0}^{1-\frac{\e}{x+\ka^{1/\al}}} \nonumber \\
 	 &= \frac{(x+\ka^{1/\al})^{q(\al-s_2)-\al} }{qs_2} \left[ \left(\frac{(x+\ka^{1/\al})^{s_2}-(x+\ka^{1/\al}-\e)^{s_2}}{\e^{s_2}}\right)^q-1 \right]. 
 	\end{align}
  Using \eqref{eq127}, \eqref{eq128} and \eqref{eq129} in \eqref{eq126}, we obtain
  \begin{align*}
  g_\e(x)&\leq C_2 \frac{(x+\ka^{1/\al})^{q(\al-s_2)-\al}}{\al-q(\al-s_2)} \frac{(x+\ka^{1/\al})^{\al-q(\al-s_2)}-(x+\ka^{1/\al}-\e)^{\al-q(\al-s_2)}}{\e^{\al-q(\al-s_2)}} \\
  &\quad+ \frac{(x+\ka^{1/\al})^{q(\al-s_2)-\al} }{qs_2} \left(\frac{(x+\ka^{1/\al})^{s_2}-(x+\ka^{1/\al}-\e)^{s_2}}{\e^{s_2}}\right)^q,
  \end{align*}
 where the right side terms converge to $0$ uniformly in $K$, as $\e\to 0$ (see \cite[p. 1369]{iann} for details). Furthermore, by proceeding similar to the proof of \cite[Lemma 3.4, p. 1373]{iann}, we can conclude that $\tl I_{3,\e}\to 0$,  uniformly in compact subsets of $\Phi^{-1}(\Om_{\varrho_3}\cap B_r)$, as $\e\to 0$. \\
 Therefore, collecting the information that $\tl I_\e$, $I_\e$ and $I_2$ are uniformly bounded in compact sets, we conclude that $2h_\e \to h$ in $L^1_{\rm loc}(\Om_{\varrho_3}\cap B_r)$. Hence, using the convergence result of \cite[Lemma 2.5]{iann}, we get the required result of the lemma. \QED
 \end{proof}
 \begin{Remark}
  For $\al\in (s_1-1/p,s_1)$ and $\ka=0$,  we can show that $\ov w_\rho=d^{\al}\in \widetilde{W}^{s_1,p}(\Om_{\varrho_7})$. Indeed, following the calculations of \cite[Theorem 3.1, (3.8)]{arora} (see also \cite[Proof of Lemma 3.1, p.1369]{iann}), for $a<0<b$,  we have
    \begin{align}\label{eq180}
     \int_{a}^{b}\int_{a}^{b} \frac{|U(x)-U(y)|^p}{|x-y|^{1+s_1p}}dxdy<\infty,
    \end{align}
  where $U(x)=(x_+)^\al$, for $\al>s_1-1/p$. We can get a finite covering $\{B_{R_i}(x_i)\}_{i\in I}$ of $\pa\Om$ and construct a similar diffeomorphisms $\Phi_i$, as in \cite[Theorem 3.3]{arora}, such that
    \begin{align*}
  	  &\Om_{\varrho_7}\cap K_i \subset \Om_{\varrho_7}\cap B_{R_i}(x_i) \Subset\Phi_i(B_\rho^+), \\
  	  &d(\Phi_i(X))= (X_N)_+ \quad\mbox{for all }X\in\Phi_i^{-1}(K_i)\subset B_\rho,
  	 \end{align*}
  where $K_i:=B_{\tau_i}(x_i)\subset B_{R_i}(x_i)$ is constructed in such a manner that it satisfies \cite[(3.33), (3.34)]{arora} and $\rho>0$ with $B_{R_i}(x_i)\Subset\Phi_i(B_\rho^+)$. Thus, the estimate of the Sobolev norm $[\cdot]_{W^{s_1,p}(\Om_{\varrho_7})}$ of $d^\al$ can be split into (3.36) and (3.37) of \cite{arora}, and then using \eqref{eq180}, the claim follows.
  
 \end{Remark}

\begin{Proposition}\label{upper}
 Let $u\in W^{s_1,p}_0(\Om)\cap L^\infty_{\rm loc}(\Om)$ be such that $|(-\De)_p^{s_1}u+  (-\De)_q^{s_2} u| \leq K$, weakly in $\Om$, for some $K>0$. Then, for all $\sg\in (0,s_1)$, there exists a constant $\Ga>0$ (depending on data of the problem and $\|u\|_{L^\infty_{\rm loc}(\Om)}$ only) such that
 	\[ |u| \leq \Ga d^{\sg} \quad\mbox{in } \Om. \]
 \end{Proposition}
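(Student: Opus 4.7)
The strategy is to build an explicit supersolution from the barrier $\ov w_\rho$ provided by Theorem \ref{sup} and Lemma \ref{lemA2}, then invoke a weak comparison principle for $(-\De)_p^{s_1}+(-\De)_q^{s_2}$ on a boundary strip $\Om_\varrho$. Fix $\sg\in(0,s_1)$. I first pick $\al\in[\max\{\sg,s_2\},s_1)$ with $\al\neq q's_2$; under $s_2<s_1$ (the case $s_1=s_2$ being analogous by the preliminaries) this set is non-empty, since removing one point from a non-empty half-open interval leaves it non-empty. Because $d\leq\mathrm{diam}(\Om)$, any bound of the form $|u|\leq \Ga d^\al$ automatically yields $|u|\leq \Ga' d^\sg$ with $\Ga'=\Ga\max\{1,\mathrm{diam}(\Om)\}^{\al-\sg}$, so it suffices to establish the stronger estimate for this $\al$. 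Since $u\in W^{s_1,p}_0(\Om)$ vanishes outside $\Om$, the boundary version of the local boundedness estimate (the unnamed proposition stated just before Theorem \ref{mainbound}) together with Corollary \ref{corlocalbdd} delivers $u\in L^\infty(\Om)$ with a constant depending only on the data and $\|u\|_{L^\infty_{\mathrm{loc}}(\Om)}$.

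For $\ka\in(0,\min\{\ka_1,\ka_2\})$ and $\varrho\in(0,\min\{\varrho_3,\varrho_6\})$ still to be fixed, let $v=\Ga\ov w_\rho$ with $\Ga>0$ to be chosen. Exploiting that each operator is separately homogeneous, Theorem \ref{sup} gives
\begin{align*}
(-\De)_p^{s_1}v\,\geq\,\Ga^{p-1}C_5\,(d+\ka^{1/\al})^{-(ps_1-\al(p-1))}\quad\text{weakly in }\Om_\varrho,
\end{align*}
while Lemma \ref{lemA2} yields $(-\De)_q^{s_2}v=\Ga^{q-1}h$ with $\|h\|_{L^\infty(\Om_{\varrho_3})}$ independent of $\ka$. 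Since $\al<s_1$ implies $ps_1-\al(p-1)>s_1>0$, the factor $(d+\ka^{1/\al})^{-(ps_1-\al(p-1))}$ is bounded below on $\Om_\varrho$ by $(\varrho+\ka^{1/\al})^{-(ps_1-\al(p-1))}$, which blows up as $\varrho,\ka\to 0^+$. Using the ordering $p\geq q$, I can first choose $\varrho,\ka$ sufficiently small and then $\Ga$ sufficiently large to arrange
\begin{align*}
(-\De)_p^{s_1}v+(-\De)_q^{s_2}v\,\geq\,\Ga^{p-1}C_5(\varrho+\ka^{1/\al})^{-(ps_1-\al(p-1))}-\Ga^{q-1}\|h\|_\infty\,\geq\,K
\end{align*}
weakly in $\Om_\varrho$; further enlarging $\Ga$ to $\Ga\geq\|u\|_{L^\infty(\Om)}\varrho^{-\al}$ ensures $v\geq u$ pointwise in $\Om\setminus\Om_\varrho$, while $v\geq 0=u$ is automatic in $\mb R^N\setminus\Om$.

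The concluding step is a weak comparison argument on $\Om_\varrho$: the inequality $(-\De)_p^{s_1}v+(-\De)_q^{s_2}v\geq(-\De)_p^{s_1}u+(-\De)_q^{s_2}u$, tested against $(u-v)_+\in W^{s_1,p}_0(\Om_\varrho)\cap W^{s_2,q}_0(\Om_\varrho)$ (supported in $\overline{\Om_\varrho}$ by the outer ordering) combined with the strict monotonicity of each $A_l$ in the sense of \cite[Lemma A.2]{brasco2}, forces $(u-v)_+\equiv 0$; hence $u\leq \Ga(d+\ka^{1/\al})_+^\al$ in $\Om_\varrho$. Passing to the limit $\ka\to 0^+$, which is legitimate because both $\Ga$ and $\|h\|_\infty$ are $\ka$-uniform, produces $u\leq \Ga d^\al$ in $\Om_\varrho$, and the trivial bound $u\leq \|u\|_\infty\varrho^{-\al}d^\al$ in $\Om\setminus\Om_\varrho$ completes the one-sided estimate on all of $\Om$. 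Replacing $u$ by $-u$ (which satisfies the same two-sided inequality on its combined fractional Laplacians) yields $|u|\leq \Ga d^\sg$. The main obstacle is the triple balance among $\varrho$, $\ka$ and $\Ga$: the singular barrier $C_5(d+\ka^{1/\al})^{-(ps_1-\al(p-1))}$, once multiplied by $\Ga^{p-1}$, must absorb both $K$ and the fluctuating $q$-term $\Ga^{q-1}\|h\|_\infty$ uniformly in $\Om_\varrho$; this is delicate precisely because of the non-homogeneity of the operator, and it hinges on the restriction $\al<s_1$, on the $\ka$-independence of $\|h\|_\infty$ furnished by Lemma \ref{lemA2}, and on the ordering $p\geq q$ which makes the $\Ga^{p-1}$-term dominate for large $\Ga$.
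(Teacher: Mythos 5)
Your proof is correct and follows essentially the same strategy as the paper's: build the supersolution $\Ga\ov w_\rho$ from the barriers supplied by Theorem \ref{sup} and Lemma \ref{lemA2}, absorb the bounded $q$-term $\Ga^{q-1}h$ into the singular $p$-term on a thin boundary strip, and then close by the weak comparison principle on $\Om_\varrho$. The only structural deviation is that you retain $\ka>0$ and pass to the limit $\ka\to 0^+$ at the end — which conveniently makes $\ov w_\rho\in\widetilde W^{s_1,p}(\Om_\varrho)$ available for any $\al\in(0,s_1)$ — whereas the paper sets $\ka=0$ directly, restricts $\sg$ to $(s_1-1/p,s_1)$ so that $d^\sg\in\widetilde W^{s_1,p}$ (this is the content of the remark following Lemma \ref{lemA2}), handles the excluded value $\sg=q's_2$ by a slight increase of the exponent, and recovers smaller $\sg$ by the trivial $d\leq\mathrm{diam}(\Om)$ monotonicity; you carry out that same monotonicity reduction at the start rather than the end. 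Two minor further remarks: the paper factors out $\Ga^{q-1}$ (using $\Ga>1$ and $p\geq q$) and chooses $\varrho$ first so that $C_5 d^{-(\cdots)}-\|h\|_\infty\geq\frac{C_5}{2}d^{-(\cdots)}$ on $\Om_\varrho$, which is slightly cleaner than your explicit balance of $\Ga^{p-1}$ against $\Ga^{q-1}$ (and avoids the degenerate case $p=q$ needing a separate word); and your global $u\in L^\infty(\Om)$ step, while correct, is not needed — only $u\in L^\infty(\Om\setminus\Om_\varrho)$ is used, which already follows from $\Om\setminus\Om_\varrho\Subset\Om$ and the hypothesis $u\in L^\infty_{\rm loc}(\Om)$.
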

 \begin{proof}
 From Theorem \ref{sup}, with $\ka=0$, $\al=\sg\in(s_1-\frac{1}{p},s_1)$ (for deduction to the smaller $\sg$, see end of the proof), there exists $\varrho_1>0$ such that 
 	\begin{align*}
 	(-\De)_p^{s_1} d^{\sg} \geq C_5 d^{-(ps_1-\sg(p-1))} \quad\mbox{weakly in }\Om_{\varrho_1}.
 	\end{align*} 
 If $\sg\neq q' s_2$, then on account of lemma \ref{lemA2}, for the choice $\ka=0$ there, we have 
 	\begin{align*}
 	(-\De)_q^{s_2} d^{\sg} =h \quad\mbox{weakly in }\Om_{\varrho_3},
 	\end{align*}
  for some $h\in L^\infty(\Om_{\varrho_3})$. Therefore, for $\Ga>1$ (to be specified later), we obtain 
 	\begin{align*}
 	(-\De)_p^{s_1} (\Ga d^{\sg})+ (-\De)_q^{s_2} (\Ga d^{\sg}) \geq C_5 \Ga^{p-1} d^{-(ps_1-\sg(p-1))}+ \Ga^{q-1} h \quad\mbox{weakly in }\Om_{\varrho_3}.
 	\end{align*}
 	That is, 
 	\begin{align*}
 	(-\De)_p^{s_1} (\Ga d^{\sg})+  (-\De)_q^{s_2} (\Ga d^{\sg}) &\geq \Ga^{q-1} \big( C_5 d^{-(ps_1-\sg(p-1))} -  \|h \|_{L^\infty(\Om_{\varrho_3})} \big) \\
 	&\geq \Ga^{q-1} \frac{C_5}{2} d^{-(ps_1-s_2(p-1))},
 	\end{align*}
  weakly in $\Om_{\varrho_2}$, where $0<\varrho_2 < \min\{ \varrho_1, \varrho_3, \big( \frac{C_5}{2  \|h\|_{L^\infty(\Om_{\varrho_3})}} \big)^{1/(ps_1-\sg(p-1))}\}$. Now we can choose $\Ga>1$ large enough so that 
 	\begin{align}\label{eqA4}
 	(-\De)_p^{s_1} (\Ga d^{\sg})+  (-\De)_q^{s_2} (\Ga d^{\sg}) \geq \Ga^{q-1} \frac{C_5}{2} d^{-(ps_1-\sg(p-1))} \geq K \quad\mbox{weakly in }\Om_{\varrho_2}.
 	\end{align}
  Furthermore, for $\Ga$ large enough (note that $\Om\setminus\Om_{\varrho_2}\Subset\Om$), 
 	\begin{align}\label{eqA5}
 	\| u \|_{L^\infty(\Om\setminus\Om_{\varrho_2})}  \leq \Ga \varrho_2^{\sg} \leq \Ga d^{\sg} \quad\mbox{in }\Om\setminus\Om_{\varrho_2}.
 	\end{align}
  Therefore, on account of \eqref{eqA4}, \eqref{eqA5} and the weak comparison principle in $\Om_{\varrho_2}$ (see e.g. \cite[Proposition 2.6]{DDS} by noticing that $u,d^\sg\in \widetilde{W}^{s_1,p}(\Om_{\varrho_2})$), we get $u \leq \Ga d^{\sg}$ in $\Om$. If $\sg\in (0,s_1)$ is such that $\sg=q's_2$, we can choose $\sg_1\in(\sg,s_1)$ (note that $s_1>q's_2$). Then, repeating the process for $\sg_1$, we obtain
  \begin{align*}
  	u \leq \Ga_1 d^{\sg_1} \leq \Ga_1 (\mathrm{diam}(\Om))^{\sg_1-\sg} d^{\sg}:= \Ga d^{\sg} \quad\mbox{in }\Om.
  \end{align*}
 Proceeding similarly for $-u$, we get the required bound of the proposition.\QED
 \end{proof}

\textbf{Proof of Theorem \ref{bdryreg}}: 	The proof of the theorem is standard and follows on the similar lines of the proof of \cite[Theorem 1.1]{iann} by taking into account theorem \ref{impintreg} and proposition \ref{upper}. Moreover, \eqref{holderbd} follows from the interior bound on $u$, given by theorem \ref{impintreg}, and the boundary behavior given by proposition \ref{upper}.\QED

\textbf{Proof of Corollary \ref{cornonpb}}: On account of theorem \ref{mainbound}, we observe that  
 $$|f(x,u)|\leq C_0 (1+|u|^{p^*_{s_1}-1}) \leq C_0 (1+\|u\|_{L^\infty(\Om)}^{p^*_{s_1}-1})=:K>0.$$ 
Consequently, the proof of the corollary follows from theorem \ref{bdryreg}.\QED

\textbf{Proof of Theorem \ref{strongmax}}:
  As a consequence of the weak comparison principle, we have $u\geq 0$ in $\mb R^N$. Since $u\in C(\ov\Om)$, we see that $U_0:=\{x\in\Om \ : \ u(x)=0\}$ is a closed set. By the weak Harnack inequality for non-negative super-solutions (see \cite[Lemma 2.7]{DDS}, which is valid also for $K=0$ there), we have 
 	\begin{align*}
 	\inf_{B_{r/4}} u \geq \sg \left(\Xint-_{B_r\setminus B_{r/2}} u^{p-1} \right)^{1/(p-1)} \quad\mbox{for all }B_r\subset\Om,
 	\end{align*} 
  where $\sg\in (0,1)$. This shows that the set $U_0$ is open in $\Om$. Suppose $u\not\equiv 0$, otherwise there is nothing to prove. Let $\Om=\cup_{i\in\mc I} \Om_i$, where $\Om_i$'s are the connected components of $\Om$. Since $U_0$ is closed as well as open in $\Om$, for each $i\in\mc I$, either $\Om_i\subset U_0$ or $\Om_i \cap U_0 =\emptyset$, that is, either $u$ vanishes in $\Om_i$ or $u>0$ in $\Om_i$. Since $u\not\equiv 0$ in $\Om$, the exits $\Om_1$ such that $u>0$ in $\Om_1$. If $u>0$ in all of $\Om_i$, then we are done. Otherwise, there exists a connected component $\Om_2$ such that $u=0$ in $\Om_2$. Let $\psi\in C_c^\infty(\Om_2)$ such that $\psi\geq 0$ and $\psi\not\equiv 0$. Since $u$ is a weak super-solution, we obtain 
 	\begin{align*}
 	0 &\leq \int_{\mb R^N} \frac{[u(x)-u(y)]^{p-1}(\psi(x)-\psi(y))}{|x-y|^{N+ps_1}}dxdy +  \int_{\mb R^N} \frac{[u(x)-u(y)]^{q-1}(\psi(x)-\psi(y))}{|x-y|^{N+qs_2}}dxdy \\
 	&=-2 \int_{\Om_2}\psi(x) \int_{\Om_2^c} \frac{u(y)^{p-1}}{|x-y|^{N+ps_1}}-2 \int_{\Om_2}\psi(x) \int_{\Om_2^c} \frac{u(y)^{q-1}}{|x-y|^{N+qs_2}}\\
 	&<0,
 	\end{align*} 
  which is a contradiction. This completes the proof.\QED

 Next, we prove a Hopf type maximum principle for non-homogeneous fractional $(p,q)$-Laplacian type operators.

 \textbf{Proof of Proposition \ref{hopf}}:
  By the strong maximum principle of Theorem \ref{strongmax}, we infer that either $u=0$ in $\mb R^N$ or $u>0$ in $\Om$. Suppose $u\not\equiv 0$ in $\Om$. Using Lemma \ref{lemA2} (for $\al=s_1$ and $\ka=0$) and \cite[Theorem 3.6]{iann} (for $s=s_1$ there), there exist $\varrho_4>0$ and $\tilde{g},g\in L^\infty(\Om_{\varrho_4})$ such that 
 	\begin{align*}
 		(-\De)_p^{s_1} d^{s_1}=\tilde{g} \quad \mbox{and }  (-\De)_q^{s_2} d^{s_1}=g \quad\mbox{weakly in }\Om_{\varrho_4}.
 	\end{align*}
  Let $B\subset \Om\cap\Om_{\varrho_4}^c$ be a closed set and $\eta>0$ be a constant (to be specified later). On account of \cite[Lemma 2.5]{DDS}, for $w= d^{s_1}+\eta \chi_B$, we have 
 	\begin{align}\label{eqA3}
 	 (-\De)_p^{s_1} w = (-\De)_p^{s_1} d^{s_1} + h_{\eta,p} \quad\mbox{and } (-\De)_q^{s_2} w= (-\De)_q^{s_2} d^{s_1}+h_{\eta,q}, \quad\mbox{weakly in }\Om_{\varrho_4},
 	\end{align}
  where $h_{\eta,p}(x):= 2\int_{B} \frac{[d^{s_1}(x)-d^{s_1}(y)-\eta]^{p-1}-[d^{s_1}(x)-d^{s_1}(y)]^{p-1}}{|x-y|^{N+ps_1}}dy$, for a.e. $x\in\Om_{\varrho_4}$, and  $h_{\eta,q}$ is defined analogously. Noticing the fact that $d^{s_1}\in L^\infty(\Om)$ and $\mathrm{dist}(B,\Om_{\varrho_4})>0$, we have 
 	\begin{align*}
 	 h_{\eta,p}(x), \ h_{\eta,q}(x) \to -\infty \ \mbox{ uniformly in }\Om_{\varrho_4}, \ \mbox{ as }\eta\to\infty.
 	\end{align*}
 Now, we choose $\eta$ large enough such that 
 	\begin{align}\label{eqA2}
 		\sup_{\Om_{\varrho_4}} (\tilde{g}+h_{\eta,p}) \leq 0 \quad\mbox{and } \sup_{\Om_{\varrho_4}} (g+h_{\eta,q}) \leq 0.
 	\end{align}
  Since $u\in C(\ov\Om)$ and $u>0$, we can choose $c\in (0,1)$ such that 
 	\[ c (\mathrm{diam}(\Om)^{s_1}+\eta) < \inf_{\Om\setminus\Om_{\varrho_4}} u(x).\]
  Thus, $v =c w \leq  u$ in $\mb R^N\setminus\Om_{\varrho_4}$. Using \eqref{eqA3}, \eqref{eqA2} together with the fact that $u$ is a weak super-solution, we deduce that
 	\begin{align*}
 	 (-\De)_p^{s_1} v +  (-\De)_q^{s_2} v &= c^{p-1} (\tilde{g}+ h_{\eta,p}) +  c^{q-1} (g+ h_{\eta,q}) \\
 	 &\leq c^{p-1} \sup_{\Om_{\varrho_4}} (\tilde{g}+ h_{\eta,p}) +  c^{q-1} \sup_{\Om_{\varrho_4}} (g+ h_{\eta,q}) \\
 	 &\leq 0 \leq (-\De)_p^{s_1} u +  (-\De)_q^{s_2} u, 
 	\end{align*}
  weakly in $\Om_{\varrho_4}$. Therefore, by the weak comparison principle in $\Om_{\varrho_4}$, we obtain $c w \leq u$ in $\Om_{\varrho_4}$. Consequently, 
 	\begin{align*}
 	 c \leq \frac{u(x)}{d^{s_1}(x)} \quad\mbox{ for all }x\in \Om_{\varrho_4}.
 	\end{align*}
 This completes the proof of the proposition. \QED

\textbf{Proof of Theorem \ref{strongcompreg}}:
 The proof of the theorem is essentially the same as the one of  \cite[Theorem 2.7]{iannmospap}. Indeed, by continuity and the fact that $u\not\equiv v$, we can find $x_0\in\Om$, $\rho,\e>0$ and  such that $B_\rho(x_0)\subset\Om$ and 
	\begin{align}\label{eq190}
		\sup_{B_\rho(x_0)}  v < \inf_{B_\rho (x_0)} u -\e/2.
	\end{align}
  We take $\Ga>1$ and define 
	\begin{align*}
		w_\Ga(x)=\begin{cases}
			\Ga v(x) \quad\mbox{if }x\in B_{\rho/2}^c(x_0)\\
			u(x) \quad\mbox{if }x\in B_{\rho/2}(x_0),
		\end{cases}
	 \mbox{for all }x\in\mb R^N.
	\end{align*}
 Moreover, on account of \cite[Lemma 2.5]{DDS}, we have, weakly in $\Om\setminus B_\rho(x_0)$,
 \begin{align*}
 	(-\De)_p^{s_1}w_\Ga +(-\De)_q^{s_2} w_\Ga \leq \Ga^{p-1} (-\De)_p^{s_1}v + \Ga^{q-1}(-\De)_q^{s_2}v-C_1 \e^{p-1}-C_2 \e^{q-1}.
 \end{align*}
 That is,
 {\small\begin{align*}
 	(-\De)_p^{s_1}w_\Ga +(-\De)_q^{s_2} w_\Ga &\leq \Ga^{p-1} \big((-\De)_p^{s_1}v +(-\De)_q^{s_2}v \big)+ (\Ga^{q-1}- \Ga^{p-1}) (-\De)_q^{s_2}v-C_1 \e^{p-1}-C_2 \e^{q-1} \\
 	&\leq (-\De)_p^{s_1}u +(-\De)_q^{s_2} u + (\Ga^{p-1}-1)K+ (\Ga^{p-1}- \Ga^{q-1})K_1-C_1 \e^{p-1}-C_2 \e^{q-1}.
 \end{align*}}
Since $\Ga^{p-1},\Ga^{q-1}\to 1$, as $\Ga\to 1$, we can choose $\Ga>1$ (close to $1$) such that 
 \begin{align*}
 	(-\De)_p^{s_1}w_\Ga +(-\De)_q^{s_2} w_\Ga 
 	\leq (-\De)_p^{s_1}u +(-\De)_q^{s_2} u, 
 \end{align*}
 weakly in $\Om\setminus B_\rho(x_0)$. Therefore, by the weak comparison principle, we have $w_\Ga\leq u$ in $\Om$. Hence, on account of \eqref{eq190}, we obtain $u\geq \Ga v>v$ in $\Om$. Moreover, by using proposition \ref{hopf},  $\frac{u-v}{d^{s_1}}\geq \frac{(\Ga-1)v}{d^{s_1}}\geq C>0$ in $\Om$. \QED

 As an application to the above results, we have the following Theorem. The result will be used to construct a sub-solution for singular problems, particularly for the equations involving non-homogeneous type operators. 
\begin{Theorem}\label{sub}
 Suppose $2\leq q\leq p<\infty$ and $0<s_2\leq s_1<1$. For every $\vartheta>0$, there exists a unique solution $w_{\vartheta}\in W^{s_1,p}_0(\Om)\cap C^{0,\sg}(\ov\Om)$, for all $\sg\in (0,s_1)$, of the following problem:
	\begin{equation*}
	 \left\{\begin{array}{rllll}	  
	 (-\Delta)^{s_1}_{p}u+  (-\Delta)^{s_2}_{q}u &= \vartheta, \; \; u>0 \quad \mbox{in } \Om, \\
	     u &=0 \quad \mbox{in }  \mb R^N\setminus \Om.
	\end{array}
	\right.\tag{$E_\vartheta$}\label{probsub}
	\end{equation*}
 Moreover, $w_\vartheta\to 0$ in $C^{0,\sg}(\ov\Om)$, as $\vartheta\to 0$, for all $\sg<s_1$.
\end{Theorem}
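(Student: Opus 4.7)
\textbf{Proof proposal for Theorem \ref{sub}.} The plan is to obtain $w_\vartheta$ as the unique minimizer of the strictly convex, coercive, weakly lower semicontinuous functional
\[
J(u) = \frac{1}{p}[u]_{W^{s_1,p}(\mathbb R^N)}^p + \frac{1}{q}[u]_{W^{s_2,q}(\mathbb R^N)}^q - \vartheta\int_\Om u\, dx
\]
over $W^{s_1,p}_0(\Om)$, which is continuously embedded in $W^{s_2,q}_0(\Om)$ by Lemma~2.1, so the $q$-seminorm is finite on this space. Coercivity follows from the fractional Poincaré inequality, weak lower semicontinuity of each Gagliardo seminorm is standard, and strict convexity holds because $p,q\ge 2$. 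The direct method therefore produces a unique minimizer $w_\vartheta$, whose Euler-Lagrange equation is exactly \eqref{probsub}. Since replacing $w_\vartheta$ by $|w_\vartheta|$ does not increase $J$ (both seminorms decrease while the linear term increases), we may assume $w_\vartheta\ge 0$. As $\vartheta>0$ forces $w_\vartheta\not\equiv 0$, the strong maximum principle (Theorem \ref{strongmax}) applied to $w_\vartheta$ as a non-negative weak super-solution of the homogeneous equation gives $w_\vartheta>0$ in $\Om$. The global Hölder regularity $w_\vartheta\in C^{0,\sigma}(\ov\Om)$ for every $\sigma\in(0,s_1)$ is then immediate from Theorem \ref{bdryreg} with $f\equiv\vartheta\in L^\infty(\Om)$.

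For the convergence $w_\vartheta\to 0$ in $C^{0,\sigma}(\ov\Om)$ as $\vartheta\to 0^+$, I would start from the energy identity obtained by testing with $w_\vartheta$ itself:
\[
[w_\vartheta]_{W^{s_1,p}(\mathbb R^N)}^p + [w_\vartheta]_{W^{s_2,q}(\mathbb R^N)}^q = \vartheta\int_\Om w_\vartheta\, dx \le C\vartheta\,\|w_\vartheta\|_{W^{s_1,p}_0(\Om)}.
\]
Dropping the non-negative $q$-seminorm yields $\|w_\vartheta\|_{W^{s_1,p}_0(\Om)}^{p-1}\le C\vartheta$, hence $w_\vartheta\to 0$ in $W^{s_1,p}_0(\Om)$ and, by Sobolev embedding, in $L^1(\Om)$. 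Fixing $\sigma'\in(\sigma,s_1)$ and restricting attention to $\vartheta\in(0,1]$, Theorem \ref{bdryreg} gives a uniform bound $\|w_\vartheta\|_{C^{\sigma'}(\ov\Om)}\le C$ because its constant is non-decreasing in $\|f\|_{L^\infty}$. Arzelà-Ascoli then makes $\{w_\vartheta\}_{\vartheta\in(0,1]}$ precompact in $C(\ov\Om)$, and since any cluster point must coincide with the $L^1$-limit $0$, we conclude $w_\vartheta\to 0$ uniformly on $\ov\Om$. The standard interpolation inequality
\[
\|u\|_{C^\sigma(\ov\Om)} \le C\,\|u\|_{L^\infty(\Om)}^{1-\sigma/\sigma'}\,\|u\|_{C^{\sigma'}(\ov\Om)}^{\sigma/\sigma'}
\]
finally upgrades the uniform convergence to convergence in $C^{0,\sigma}(\ov\Om)$.

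The main delicate step is verifying that the Hölder estimate provided by Theorem \ref{bdryreg} is locally uniform in the parameter $\vartheta$; fortunately this is already encoded in the theorem (its constant is an explicit non-decreasing function of $\|f\|_{L^\infty}$), so no additional work beyond invoking the result is required. Everything else amounts to a classical variational construction plus the strong maximum principle plus interpolation, all of which are available from the results already established in the paper.
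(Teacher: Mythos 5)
Your proof is correct and follows essentially the same route as the paper: direct method minimization for existence, convexity/monotonicity for uniqueness, Theorem \ref{bdryreg} for the global H\"older bound uniform in $\vartheta\in(0,1]$, and Arzel\`a--Ascoli for the convergence $w_\vartheta\to 0$. The only mild variation is in identifying the limit: the paper invokes uniqueness of the solution to $(E_\vartheta)$ with $\vartheta=0$, whereas you derive $\|w_\vartheta\|_{W^{s_1,p}_0(\Om)}^{p-1}\le C\vartheta$ directly from the energy identity and then upgrade via interpolation --- both are valid and essentially equivalent.
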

\begin{proof}
 The existence of a solution in $W^{s_1,p}_0(\Om)$ follows by using the standard minimization argument. Indeed, the associated energy functional is weakly lower semicontinuous and coercive in $W^{s_1,p}_0(\Om)$. Hence there exists a global minimizer, which in fact will be a solution (due to the nice structure of the functional). Uniqueness follows from the monotonocity of the operator $(-\De)_p^{s_1}+ (-\De)_q^{s_2}$ (see for instance \cite[Proof of Lemma 4.2]{DDS}). Moreover, boundedness of the solution is a consequence of \cite[Theorem 2.3]{DDS}, and then using Theorem \ref{bdryreg}, we see that $w_{\vartheta}\in C^{0,\sg}(\ov\Om)$, for all $\sg\in (0,s_1)$. Further, we see that $\|w_{\vartheta}\|_{L^\infty(\Om)}, \|w_\vartheta\|_{W^{s_1,p}_0(\Om)}$ are bounded and independent of $\vartheta\in (0,1)$. Therefore, again employing Theorem \ref{bdryreg}, we get that 
    \[ \| w_\vartheta \|_{C^\sg(\ov\Om)} \leq C,  \]
 where $C>0$ is a constant, independent of $\vartheta$. By the compact embedding result (Arzela-Ascoli's theorem), we have that $w_\vartheta\to w$ in $C^{0,\sg_1}(\ov\Om)$, for all $\sg_1<\sg$. On account of the uniqueness result for the solution to \eqref{probsub}, for $\vartheta=0$, we conclude that $w=0$. Additionally, by the weak comparison principle, we have $w_{\vartheta_1} \leq w_{\vartheta_2}$ in $\Om$, for $\vartheta_1 < \vartheta_2$. \QED
\end{proof}

\section{Regularity results for singular problems}\label{singl}
 In this section, we obtain some regularity results for the weak solution to problem \eqref{probsing}. For the existence of the minimal solution,
 we consider the following auxiliary problem, for $\e>0$:
 \begin{equation*}
    \left\{\begin{array}{rllll}
     (-\Delta)^{s_1}_{p}u+(-\Delta)^{s_2}_{q}u & = K_{\ga,\e}(x)(u+\e)^{-\de}, \; \ u>0 \quad \text{in } \Om, \\ 
     u&=0 \quad \text{in } \mathbb{R}^N\setminus \Om,
    \end{array}
    \right.\tag{$\mc S^\e_{\ga,\de}$} \label{probsingeps}
 \end{equation*}
where $K_{\ga,\e}(x):=\begin{cases}
  (K_{\ga}(x)^\frac{-1}{\ga}+\e^\frac{1}{\al_{\ga,\de}})^{-\ga}  & \mbox{if } K_\ga(x)>0,\\
  0 &\mbox{ otherwise},
 \end{cases}$
 with $\al_{\ga,\de}=\frac{ps_1-\ga}{p-1+\de}$. Then, we have 
 \begin{align}\label{keraux}
 	 C_3 (d(x)+\e^\frac{1}{\al_{\ga,\de}})^{-\ga} \leq K_{\ga,\e}(x)\leq  C_4 (d(x)+\e^\frac{1}{\al_{\ga,\de}})^{-\ga} \quad\mbox{in }\Om.
 \end{align}
 We have the existence and uniqueness result for the auxiliary problem as below.
 \begin{Proposition}\label{prop1}
 Let $1<q\leq p<\infty$. Then, there exists a unique solution $v_\e\in W^{s_1,p}_0(\Om)$ of problem \eqref{probsingeps}, for all $\e>0$, $\ga\ge 0$ and $\de>0$. Moreover, the sequence $\{v_\e\}$ is decreasing in $\e$ and for every $\Om^\prime\Subset\Om$, there exists a constant $C_{\Om^\prime}>0$ such that 
 \begin{align}\label{eq20}
 	C_{\Om^\prime}\leq v_1(x)\leq v_\e(x) \quad\mbox{in }\Om^\prime.
  \end{align}	
 Further, the sequence $\{v_\e\}$ is bounded in $W^{s_1,p}_0(\Om)$ if $\ga-s_1(1-\de)\leq 0$, and the sequence $\{v_\e^\theta\}$ is bounded in $W^{s_1,p}_0(\Om)$, if $\ga-s_1(1-\de)> 0$, for some $\theta>0$.
 \end{Proposition}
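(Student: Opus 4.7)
For any fixed $\e>0$, both $K_{\ga,\e}$ and the map $t\mapsto(t+\e)^{-\de}$ on $[0,\infty)$ are bounded, so the right-hand side of \eqref{probsingeps} is subcritical. The plan is to minimize
\[
J_\e(u)=\frac{1}{p}[u]^p_{W^{s_1,p}(\mb R^N)}+\frac{1}{q}[u]^q_{W^{s_2,q}(\mb R^N)}-\int_\Om K_{\ga,\e}(x)\,G_\e(u^+(x))\,dx,
\]
where $G_\e(t):=\int_0^t(s+\e)^{-\de}\,ds$, on $W^{s_1,p}_0(\Om)$. Since $G_\e(t)\le\e^{-\de}t$, the last term is subordinate to the leading two, so $J_\e$ is coercive and weakly lower semicontinuous, producing a minimizer. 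Replacing it with its absolute value can only decrease $J_\e$, so we obtain a nonnegative minimizer $v_\e$ satisfying the Euler-Lagrange equation, which is precisely \eqref{probsingeps}. Since the right-hand side is nonnegative, Theorem \ref{strongmax} gives $v_\e>0$ in $\Om$. Uniqueness then follows by the standard trick: for two solutions $u_1,u_2$, test the difference of equations with $(u_1-u_2)_+$ and use the strict monotonicity of $(-\Delta)^{s_1}_p+(-\Delta)^{s_2}_q$ together with the strict decrease of $t\mapsto(t+\e)^{-\de}$.

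\textbf{Monotonicity in $\e$ and lower bound on compacts.} For $\e_1<\e_2$ the maps $\e\mapsto K_{\ga,\e}(x)$ and $\e\mapsto(v_{\e_1}(x)+\e)^{-\de}$ are nonincreasing, so $v_{\e_1}$ is a weak super-solution of the $\e_2$-problem; the weak comparison principle \cite[Proposition 2.6]{DDS} then gives $v_{\e_1}\ge v_{\e_2}$. For $\Om'\Subset\Om$, the inequality $v_\e\ge v_1$ for $\e\le 1$ reduces \eqref{eq20} to showing $\inf_{\Om'}v_1>0$: since $v_1$ solves \eqref{probsingeps} with bounded right-hand side, Theorem \ref{bdryreg} yields $v_1\in C(\ov\Om)$, and Theorem \ref{strongmax} yields $v_1>0$ in $\Om$, so $C_{\Om'}:=\inf_{\Om'}v_1>0$ works.

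\textbf{Uniform Sobolev bound.} In the case $\ga-s_1(1-\de)\le 0$ (which, since $\ga\ge 0$, forces $\de<1$), test with $\phi=v_\e$, discard the nonnegative $A_q$-contribution, and use $(v_\e+\e)^{-\de}v_\e\le v_\e^{1-\de}$ together with $K_{\ga,\e}\le C d^{-\ga}$ to obtain
\[
\|v_\e\|^p_{W^{s_1,p}_0(\Om)}\le C\int_\Om d^{\,s_1(1-\de)-\ga}\Bigl(\tfrac{v_\e}{d^{s_1}}\Bigr)^{1-\de}dx.
\]
The hypothesis makes $d^{\,s_1(1-\de)-\ga}$ bounded on $\Om$, and H\"older's inequality with exponents $(p/(p-1+\de),p/(1-\de))$ combined with the fractional Hardy inequality $\|v_\e/d^{s_1}\|_{L^p(\Om)}\le C\|v_\e\|_{W^{s_1,p}_0(\Om)}$ then closes the estimate. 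If $\ga-s_1(1-\de)>0$, instead test with $\phi=v_\e^{p\theta-p+1}$ (admissible since $v_\e\in L^\infty(\Om)\cap W^{s_1,p}_0(\Om)$ for fixed $\e$) for the choice
\[
\theta=\frac{s_1(p-1+\de)}{ps_1-\ga}>1,
\]
and invoke the standard algebraic inequality $[a-b]^{p-1}(a^{p\theta-p+1}-b^{p\theta-p+1})\ge c|a^\theta-b^\theta|^p$ for $a,b\ge 0$, $\theta\ge 1$, to produce $[v_\e^\theta]^p_{W^{s_1,p}}$ on the left. Again dropping the nonnegative $A_q$-term on the right and applying H\"older-Hardy to $v_\e^\theta$ in place of $v_\e$, the defining equation for $\theta$ makes $s_1(p\theta-p+1-\de)=\ga\theta$, so the distance weight becomes $d^{\,0}$; since $(p\theta-p+1-\de)/\theta<p$ the estimate closes.

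\textbf{Main obstacle.} The delicate step is Case~$2$ of the uniform bound, where one must (i) pin down the exponent $\theta$ so that the weight in the H\"older-Hardy manipulation degenerates to a bounded power of $d$, (ii) apply the algebraic lemma for the fractional $p$-Laplacian of a power of the solution and verify its constants are independent of $\e$, and (iii) check that the non-homogeneous $(-\Delta)^{s_2}_q$-term with different exponents $(q,s_2)$ stays nonnegative so that it can be discarded cleanly; the last point is not automatic and must use that $t\mapsto t^{p\theta-p+1}$ is non-decreasing, which forces $\theta\ge (p-1)/p$ (satisfied since $\theta>1$).
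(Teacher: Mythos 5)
Your proof takes essentially the same route as the paper: direct minimization of the associated energy functional (the paper defers to \cite{JDS} but the argument is the one you give), weak comparison plus the strong maximum principle and boundary regularity for the monotonicity and interior lower bound, and testing with $v_\e$ resp.\ $v_\e^{p(\theta-1)+1}$ together with the Brasco--Parini algebraic inequality, H\"older and fractional Hardy for the uniform Sobolev bounds. The only noteworthy difference is cosmetic: in Case~2 you pin down $\theta=\frac{s_1(p-1+\de)}{ps_1-\ga}$ so that the distance weight degenerates exactly to $d^0$, whereas the paper simply takes any $\theta>\max\bigl\{1,\frac{(p+\de-1)(s_1-1/p)}{ps_1-\ga},\frac{p+\de-1}{p}\bigr\}$ and bounds the (possibly negative-power) weight via H\"older; your choice lies in that admissible range and is a cleaner parametrization of the same computation.
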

\begin{proof}
 Proceeding similar to \cite{JDS}, we can prove the existence and uniqueness result. Moreover, by the regularity result of theorem \ref{bdryreg}, the strong maximum principle \ref{strongmax} and the weak comparison principle, we obtain \eqref{eq20}. Next, we prove the boundedness of the sequence $\{v_\e\}$. For the case $\ga-s_1(1-\de)\leq 0$, testing the weak formulation of the problem by $v_\e$, and using \eqref{keraux} together with the fractional Hardy's inequality,  we get 
 \begin{align*}
 \|v_\e\|^p_{W^{s_1,p}_0(\Om)}\leq A_p(v_\e,v_\e,\mb R^{2N})+ A_q(v_\e,v_\e,\mb R^{2N})&=\int_{\Om} K_{\ga,\e}(x)(v_\e+\e)^{-\de}v_\e\\
 &\leq  C_4 \int_{\Om}d^{-\ga+s_1(1-\de)}\left(\frac{v_\e}{d^{s_1}}\right)^{1-\de}\\
 &\leq C\|v_\e\|^{1-\de}_{W^{s_1,p}_0(\Om)},
 \end{align*}
 where $C$ is independent of $\e$. For the remaining case, we test the weak formulation of the problem by $v_\e^{p(\theta-1)+1}$, for some $\theta\geq 1$ (to be specified later),
 \begin{align*}
 	\|v_\e^\theta\|^p_{W^{s_1,p}_0(\Om)}\leq C A_p(v_\e,v_\e^{p(\theta-1)+1},\mb R^{2N})+ C A_q(v_\e,v_\e^{p(\theta-1)+1},\mb R^{2N})
 	&=C\int_{\Om} \frac{K_{\ga,\e}(x)}{(v_\e+\e)^{\de}}v_\e^{p(\theta-1)+1},
 \end{align*}
 where we have used \cite[Lemma A.2]{brasco2} for $g(t):=t^{p(\theta-1)+1}$, $G(t):=\int_{0}^{t}g^\prime(\tau)^{1/p}d\tau$ and $\tl G(t):=\int_{0}^{t}g^\prime(\tau)^{1/q}d\tau$, which implies that $A_q(v_\e,v_\e^{p(\theta-1)+1},\mb R^{2N})\geq \| \tl G(v_\e)\|_{W^{s_2,q}_0(\Om)} \geq 0$. Therefore, on account of \eqref{keraux}, H\"older's and Hardy's inequalities, we obtain
 \begin{align*}
  \|v_\e^\theta\|^p_{W^{s_1,p}_0(\Om)} &\leq C C_4 \int_{\Om}d^{-\ga+s_1\frac{p(\theta-1)+1-\de}{\theta}} \left(\frac{v_\e^\theta}{d^{s_1}}\right)^\frac{p(\theta-1)+1-\de}{\theta}\\
  &\leq C \|v_\e^\theta\|^{\frac{p(\theta-1)+1-\de}{\theta}}_{W^{s_1,p}_0(\Om)},
 \end{align*}
 where $\theta>\max\{1, \frac{(p+\de-1)(s_1-1/p)}{ps_1-\ga}, \frac{p+\de-1}{p}\}$ and $C>0$ is independent of $\e$. This concludes that the sequence $\{v_\e^\theta\}$ is bounded in $W^{s_1,p}_0(\Om)$. \QED
\end{proof}
\begin{Corollary}\label{cor3}
  For all $\ga\in(0,ps_1)$, up to a subsequence, $v_\e$ converges pointwise to $v$, where $v$ is the minimal solution to problem \eqref{probsing}. 
\end{Corollary}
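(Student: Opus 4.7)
The plan is to identify $v$ with the monotone pointwise limit of $\{v_\e\}$ and then pass to the limit in the weak formulation of \eqref{probsingeps}. By proposition \ref{prop1}, $\e\mapsto v_\e(x)$ is decreasing, so the pointwise limit
\[ v(x):=\lim_{\e\to 0^+} v_\e(x) \]
exists on $\Om$. The uniform lower bound $v_\e\geq v_1\geq C_{\Om'}>0$ on each $\Om'\Subset\Om$ passes to the limit, giving $v>0$ in $\Om$. Moreover, proposition \ref{prop1} supplies $\theta\geq 1$ such that $\{v_\e^\theta\}$ is bounded in $W^{s_1,p}_0(\Om)$; by reflexivity, up to a (not relabeled) subsequence, $v_\e^\theta\rightharpoonup v^\theta$ weakly in $W^{s_1,p}_0(\Om)$, and in particular $v^\theta\in W^{s_1,p}_0(\Om)$ as demanded by definition \ref{defn1}.

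Next I show that $v$ satisfies the weak formulation of \eqref{probsing}. Fix $\phi\in C_c^\infty(\Om)$ and let $K:=\mathrm{supp}(\phi)\Subset\Om$. Since $v_\e\geq C_K>0$ on $K$ uniformly in $\e$, the integrand $K_{\ga,\e}(x)(v_\e+\e)^{-\de}\phi(x)$ is dominated by $\|K_\ga\|_{L^\infty(K)}\,C_K^{-\de}\,|\phi(x)|$ and converges pointwise to $K_\ga(x) v^{-\de}\phi(x)$, so dominated convergence gives
\[ \int_\Om K_{\ga,\e}(v_\e+\e)^{-\de}\phi\, dx \;\longrightarrow\; \int_\Om K_\ga v^{-\de}\phi\, dx. \]
For the nonlocal part $A_p(v_\e,\phi,\mb R^{2N})+A_q(v_\e,\phi,\mb R^{2N})$, the a.e.~convergence $v_\e\to v$, together with the uniform $W^{s_1,p}_0$-bound on $v_\e^\theta$ (used through a chain-rule-type estimate on the region where $v_\e$ remains uniformly bounded from below, or directly when $\theta=1$) and the compact support of $\phi$, let me apply Vitali's convergence theorem to recover the corresponding expression for $v$, thus verifying \eqref{solndef} with $f=K_\ga v^{-\de}$.

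Minimality of $v$ follows by a comparison argument. If $w$ is any weak solution to \eqref{probsing}, then $w>0$ on compact subsets of $\Om$ and the inequalities $K_{\ga,\e}\leq K_\ga$ and $(w+\e)^{-\de}\leq w^{-\de}$ yield
\[ (-\De)_p^{s_1}w+(-\De)_q^{s_2}w=K_\ga w^{-\de}\geq K_{\ga,\e}(w+\e)^{-\de}, \]
so $w$ is a weak super-solution of \eqref{probsingeps}. The weak comparison principle for the strictly monotone operator $(-\De)_p^{s_1}+(-\De)_q^{s_2}$ (as in \cite[Proposition 2.6]{DDS}) then gives $v_\e\leq w$ in $\Om$ for every $\e>0$, and passing to the limit yields $v\leq w$.

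The principal technical difficulty is the rigorous passage to the limit in the nonlocal operator when $\ga-s_1(1-\de)>0$: the sequence $\{v_\e\}$ itself is then controlled only through the nonlinear substitute $v_\e^\theta$, so uniform integrability of the double integrals defining $A_p$ and $A_q$ must be extracted on the region where $v_\e$ is bounded away from zero and infinity, making essential use of the lower bound from proposition \ref{prop1} and the compact support of $\phi$ to neutralise the tail contributions.
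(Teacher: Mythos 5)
Your plan---take the monotone pointwise limit of $\{v_\e\}$, verify the limit is a solution, and use the $\e$-regularized comparison to prove minimality---is the right structure and matches the natural reading of what the paper leaves implicit. A few points deserve scrutiny.

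The identification of the weak limit of $v_\e^\theta$ with $v^\theta$ is fine: boundedness gives a weakly convergent subsequence, compact Sobolev embedding gives a.e.\ convergence of a further subsequence, and monotone pointwise convergence of $v_\e$ pins the limit as $v^\theta$. This also resolves the ``up to a subsequence'' phrasing in the statement.

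The passage to the limit in $A_p(v_\e,\phi,\mb R^{2N})+A_q(v_\e,\phi,\mb R^{2N})$ is the real content and you leave it as a sketch. What should be said: with $K=\mathrm{supp}(\phi)\Subset\Om'\Subset\Om$, split the double integral over $\Om'\times\Om'$ and its complement. Over $\Om'\times\Om'$, the uniform lower bound $v_\e\ge C_{\Om'}>0$ plus the Lipschitz estimate $|v_\e(x)-v_\e(y)|\le \theta^{-1}C_{\Om'}^{1-\theta}|v_\e^\theta(x)-v_\e^\theta(y)|$ transfers the bound on $[v_\e^\theta]_{W^{s_1,p}(\Om)}$ to a bound on $[v_\e]_{W^{s_1,p}(\Om')}$, and then equi-integrability of the integrand follows from a H\"older estimate and absolute continuity of the integral of $|x-y|^{-N+(1-s_1)p}$; the complementary region is handled by the tail norms (uniformly bounded because $v_\e\le v_1$ off a fixed compact set -- no, this is not quite right since $v_\e$ increases as $\e\to 0$; one must instead use the $L^{p^*_{s_1}}$-bound coming from $\|v_\e^\theta\|_{W^{s_1,p}_0}$ and a local boundedness argument as in Proposition \ref{localbdd}). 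So you need a \emph{uniform local $L^\infty$} bound and a uniform tail bound before Vitali can be invoked; your sketch papers over exactly these estimates.

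The minimality step has a genuine gap. Your chain of inequalities
\[
 K_\ga(x) w^{-\de}\geq K_{\ga,\e}(x)(w+\e)^{-\de}
\]
and the resulting super-solution statement for $w$ are correct. But \cite[Proposition 2.6]{DDS} is a comparison principle for sub/super-solutions of a \emph{fixed-right-hand-side} problem with matching data outside the domain, and it presumes both functions lie in the space where $(u-v)_+$ is an admissible test function. Here $w$ is only in $W^{s_1,p}_{\rm loc}(\Om)$ with $w^\theta\in W^{s_1,p}_0(\Om)$ for some $\theta\geq 1$ possibly strictly larger than $1$. Thus $(v_\e-w)_+$ need not a priori belong to $W^{s_1,p}_0(\Om)$, and you cannot simply quote the reference. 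Moreover the reaction term in $(\mc S^\e_{\ga,\de})$ depends (monotonically) on $u$, so strictly what is needed is a comparison for the monotone reaction, not the comparison for prescribed $f$. One must either (i) restrict to $\theta=1$ (i.e.\ $\ga-s_1(1-\de)\le 0$); (ii) test with $(v_\e-w-\sigma)_+$ for $\sigma>0$ (which has compact support since $v_\e\in W^{s_1,p}_0(\Om)\cap L^\infty(\Om)$ and $w\geq 0$) and then let $\sigma\to 0$; or (iii) invoke a comparison principle tailored to the singular setting, as the paper's Proposition \ref{prop4} does --- but that one is restricted to $\ga<1+s_1-1/p$ and is stated after the corollary. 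Your proposal asserts the comparison directly without flagging or resolving this obstruction.

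In short: the architecture is right, the identification of the limit is right, but both the uniform bounds needed for the limit passage and the comparison-principle step for minimality are asserted where they must be proved.
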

Next, we state the following weak comparison principle for singular problems whose proof runs essentially along the same lines of \cite[Theorem 4.2]{canino} and \cite[Theorem 1.5]{JDS}.
\begin{Proposition}\label{prop4}
	Let $\ga<1+s_1-\frac{1}{p}$ and $u,v\in W^{s_1,p}_{loc}(\Om)$ be sub and super solution of \eqref{probsing}, respectively, in the sense of definition \ref{defn1}. Then, $u\le v$ a.e. in $\Om$.
\end{Proposition}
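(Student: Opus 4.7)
The plan is to test the sub- and super-solution inequalities against a common non-negative test function derived from $(u-v)_+$, subtract them, and exploit two complementary monotonicities: the strict monotonicity of the nonlocal operators $(-\De)_p^{s_1}$ and $(-\De)_q^{s_2}$ on the left-hand side, and the strict decrease of $t \mapsto t^{-\de}$ on the right-hand side. The scheme follows \cite[Theorem 4.2]{canino} and \cite[Theorem 1.5]{JDS}; the presence of the additional lower-order term $(-\De)_q^{s_2}$ is harmless since it contributes with the same sign as $(-\De)_p^{s_1}$, and the argument for the $p$-part carries over verbatim to the $q$-part.

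First, I would establish that $w := (u-v)_+ \in W^{s_1,p}_0(\Om)$. By Definition \ref{defn1}, both $u^\theta$ and $v^\theta$ lie in $W^{s_1,p}_0(\Om)$ for some $\theta \ge 1$, and both $u,v$ are locally bounded away from zero in $\Om$. Combining a nonlinear chain-rule estimate (to transfer information from $u^\theta, v^\theta$ to $u, v$) with the hypothesis $\ga < 1 + s_1 - \frac{1}{p}$, one shows via the fractional Hardy inequality that $w$ has the decay at $\pa\Om$ required to lie in $W^{s_1,p}_0(\Om)$. Since Definition \ref{defn1} only permits testing against $\phi \in C_c^\infty(\Om)$, I would then approximate $w$ by $w_{k,\e} := \min\{w,k\}\,\eta_\e$, where $\eta_\e \in C_c^\infty(\Om)$ satisfies $\eta_\e \equiv 1$ on $\Om \setminus \Om_\e$ and vanishes on $\Om_{\e/2}$.

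Second, testing the sub-solution inequality for $u$ and the super-solution inequality for $v$ against $w_{k,\e}$ and subtracting, one obtains
\begin{align*}
   &\bigl[A_p(u, w_{k,\e}, \mb R^{2N}) - A_p(v, w_{k,\e}, \mb R^{2N})\bigr] + \bigl[A_q(u, w_{k,\e}, \mb R^{2N}) - A_q(v, w_{k,\e}, \mb R^{2N})\bigr] \\
   &\qquad \le \int_\Om K_\ga(x)\bigl(u^{-\de} - v^{-\de}\bigr) w_{k,\e} \, dx.
\end{align*}
Letting $\e \to 0^+$ and then $k \to \infty$, the left-hand side converges by dominated convergence (justified by $w \in W^{s_1,p}_0(\Om)$), while on the right-hand side $K_\ga(x)\,v^{-\de}(u-v)_+$ serves as an $L^1$ majorant, integrable thanks to the hypothesis $\ga < 1 + s_1 - \frac{1}{p}$ together with the fractional Hardy inequality. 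One obtains the same inequality with $w$ replacing $w_{k,\e}$. Strict monotonicity of $[\cdot]^{p-1}$ and $[\cdot]^{q-1}$ makes the left-hand side non-negative, while strict monotonicity (decrease) of $t\mapsto t^{-\de}$ on the set $\{u>v\}$ makes the right-hand side non-positive. Both sides therefore vanish, forcing $w\equiv 0$ a.e., that is, $u \le v$ a.e. in $\Om$.

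The main obstacle will be the first step: rigorously establishing $(u-v)_+ \in W^{s_1,p}_0(\Om)$ from the weaker information $u^\theta, v^\theta \in W^{s_1,p}_0(\Om)$. This requires careful chain-rule manipulations with power functions and nonlinear truncations, and it is exactly here that the restriction $\ga < 1 + s_1 - \frac{1}{p}$ enters essentially: it is the threshold that, via the fractional Hardy inequality, converts the $L^{p-1}_{s_1p}(\mb R^N)$ boundary behavior of $u$ and $v$ into an admissible test-function class that closes the comparison loop. A secondary technical point is ensuring uniform control of the singular term on the shrinking strip $\Om_\e$ as $\e\to 0^+$, which is again controlled by the same Hardy-type estimate.
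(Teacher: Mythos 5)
Your overall scheme — test both inequalities against a truncated cutoff of $(u-v)_+$, subtract, and close via the joint monotonicity of $A_p, A_q$ and the strict decrease of $t\mapsto t^{-\de}$ — is exactly what the paper intends; the paper simply defers to \cite[Theorem 4.2]{canino} and \cite[Theorem 1.5]{JDS}, which you also cite, and the observation that $(-\De)_q^{s_2}$ contributes with the same sign as $(-\De)_p^{s_1}$ is the only new ingredient relative to those references.

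Two technical points deserve care though. First, the claim that $K_\ga v^{-\de}(u-v)_+$ is an $L^1$ majorant is not justified at this stage: $v$ is a generic super-solution (not the minimal solution), so you have no a priori lower barrier $v\ge\eta d^{s_1}$ to control $v^{-\de}$ near $\pa\Om$. Fortunately you do not need any majorant there: on the set $\{u>v\}$ the integrand $K_\ga(u^{-\de}-v^{-\de})(u-v)_+$ is pointwise $\le 0$ and vanishes elsewhere, so Fatou or monotone convergence from above gives the required sign of the limit without integrability. Second, your intended derivation of $(u-v)_+\in W^{s_1,p}_0(\Om)$ from $u^\theta,v^\theta\in W^{s_1,p}_0(\Om)$ is where I would expect the argument to stall if pushed literally: the map $t\mapsto t^{1/\theta}$ is only H\"older (not Lipschitz) at $0$ when $\theta>1$, and H\"older composition does not preserve $W^{s_1,p}$ in general, so there is no clean ``chain rule'' taking you from $u^\theta$ to $u$. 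The resolution in the cited references is to avoid asserting $(u-v)_+\in W^{s_1,p}_0(\Om)$ outright, instead working with the admissible truncations $\min\{(u-v)_+,k\}\eta_\e$ throughout (which lie in $W^{s_1,p}_0(\Om')\cap W^{s_2,q}_0(\Om')$ thanks only to $u,v\in W^{s_1,p}_{\rm loc}\cap L^\infty_{\rm loc}$) and controlling the error introduced by $\eta_\e$ using the boundary decay encoded in $u^\theta,v^\theta\in W^{s_1,p}_0(\Om)$ together with the hypothesis $\ga<1+s_1-\frac{1}{p}$ (equivalently, $(\ga-s_1)p'<1$, which is precisely what makes the relevant Hardy-type integrals near $\pa\Om$ converge). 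You should re-center the writeup around the truncated test functions rather than the limiting function $(u-v)_+$.
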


\begin{Theorem}\label{uppdist}
 Let $1<q\leq p<\infty$, $\ga\in[0,ps_1)$ and $\de>0$. Let $v\in W^{s_1,p}_{\rm loc}(\Om)$ be the minimal solution to problem \eqref{probsing} (given by corollary \ref{cor3}). Then, there exist positive constants $\eta,\Ga>0$ (depending on the data of the problem and $\|v\|_{L^\infty_{\rm loc}(\Om)}$ only) such that 
	\begin{align*}
	 \eta d(x)^{s_1} \leq v(x) \leq \Ga d(x)^\mu \quad \mbox{in }\Om, 
	\end{align*}
 where $\mu= \begin{cases}
  \frac{ps_1-\ga}{p-1+\de}  \quad\mbox{if }	\ga-s_1(1-\de)> 0 \mbox{ with }\ga\neq ps_1-q's_2(p-1+\de),\\	
  \frac{ps_1-\ga_1}{p-1+\de}  \quad \mbox{for all }\ga_1\in (\ga,ps_1), \mbox{ if }	\ga-s_1(1-\de)> 0 \mbox{ with }\ga\neq ps_1-q's_2(p-1+\de),\\
  \sg \quad\mbox{for all }\sg\in(0,s_1), \mbox{ if }\ga-s_1(1-\de)\leq 0,	
 \end{cases}$ and the lower estimate holds only for $s_1\neq q' s_2$.
\end{Theorem}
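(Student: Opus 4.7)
The plan is to establish the two-sided estimate at the level of the regularised solutions $v_\e$ of \eqref{probsingeps} by barrier arguments on a boundary strip $\Om_\varrho$, and then pass to the pointwise limit provided by Corollary \ref{cor3}. Since each $v_\e$ solves a regular problem whose nonlinearity $u\mapsto K_{\ga,\e}(x)(u+\e)^{-\de}$ is decreasing, the weak comparison principle \cite[Proposition 2.6]{DDS} is directly applicable. The monotonicity of $\e\mapsto v_\e$ from Proposition \ref{prop1} delivers a uniform-in-$\e$ upper bound $v_\e\le v$ in $L^\infty_{\rm loc}(\Om)$, while the same proposition guarantees $v_\e\ge v_1\ge C_{\Om'}>0$ on any $\Om'\Subset\Om$. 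Thus both inequalities reduce to the construction of sub/super-solutions on $\Om_\varrho$ with matching boundary ordering on $\Om\setminus\Om_\varrho$.

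For the upper estimate I would take as supersolution $\Ga\bar w_\rho$, with the barrier $\bar w_\rho(x)=(d_e(x)+\ka^{1/\al})_+^\al$ from Theorem \ref{sup}, selecting $\ka=\e$ and $\al=\mu$ so that the scaling identity $ps_1-\al(p-1)=\ga+\al\de$ holds. This is precisely $\al=(ps_1-\ga)/(p-1+\de)$ in case (ii), the analogous $\mu_1$ coming from $\ga_1\in(\ga,ps_1)$ in case (iii), and any $\sg\in(s_2,s_1)\setminus\{q's_2\}$ in case (i). Theorem \ref{sup} then yields $(-\De)_p^{s_1}(\Ga\bar w_\rho)\ge C_5\Ga^{p-1}(d+\e^{1/\mu})^{-(\ga+\mu\de)}$ weakly in $\Om_\varrho$, and Lemma \ref{lemA2}, applicable thanks to $\al\ne q's_2$ (the very hypothesis $\ga\ne ps_1-q's_2(p-1+\de)$), controls $(-\De)_q^{s_2}(\Ga\bar w_\rho)=\Ga^{q-1}h$ with $\|h\|_{L^\infty}$ independent of $\e$. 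Combining with the upper bound $K_{\ga,\e}(\Ga\bar w_\rho+\e)^{-\de}\le C_4\Ga^{-\de}(d+\e^{1/\mu})^{-(\ga+\mu\de)}$ from \eqref{keraux}, I first fix $\Ga$ so large that $C_5\Ga^{p-1}\ge 2C_4\Ga^{-\de}$ and then $\varrho$ so small that on $\Om_\varrho$ the blow-up factor $(d+\e^{1/\mu})^{-(\ga+\mu\de)}$ absorbs the $\Ga^{q-1}\|h\|_\infty$ contribution (possible since $\ga+\mu\de>0$), obtaining the supersolution property uniformly in $\e$. Enlarging $\Ga$ once more so that $\Ga\bar w_\rho\ge v_\e$ on $\Om\setminus\Om_\varrho$ (by the uniform $L^\infty_{\rm loc}$ bound), weak comparison and $\e\to 0^+$ give $v\le\Ga d^\mu$.

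For the lower estimate assume $s_1\ne q's_2$ and use $\eta d^{s_1}$ as a subsolution. As in the proof of Proposition \ref{hopf}, one has $(-\De)_p^{s_1}d^{s_1}=\tilde g$ and $(-\De)_q^{s_2}d^{s_1}=g$ weakly in $\Om_{\varrho_4}$ for some $\tilde g,g\in L^\infty(\Om_{\varrho_4})$; the first identity comes from \cite[Theorem 3.6]{iann}, the second from Lemma \ref{lemA2} with $\al=s_1$, $\ka=0$. Hence $(-\De)_p^{s_1}(\eta d^{s_1})+(-\De)_q^{s_2}(\eta d^{s_1})=\eta^{p-1}\tilde g+\eta^{q-1}g$ is bounded uniformly in $\e$, whereas \eqref{keraux} together with $\eta d^{s_1}+\e\le\eta(\mathrm{diam}\,\Om)^{s_1}+\e$ yields
\[ K_{\ga,\e}(x)(\eta d^{s_1}+\e)^{-\de}\ge C_3(d+\e^{1/\mu})^{-\ga}(\eta(\mathrm{diam}\,\Om)^{s_1}+\e)^{-\de}, \]
which diverges as $d,\e\to 0^+$ because $\ga+s_1\de>0$. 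Choosing $\eta$ and $\varrho\in(0,\varrho_4)$ small therefore secures the subsolution property of $\eta d^{s_1}$ in $\Om_\varrho$ uniformly in $\e$, and a further shrinking of $\eta$ so that $\eta(\mathrm{diam}\,\Om)^{s_1}\le C_{\Om\setminus\Om_\varrho}$ gives $\eta d^{s_1}\le v_\e$ on $\Om\setminus\Om_\varrho$ by Proposition \ref{prop1}. Weak comparison in $\Om_\varrho$ and Corollary \ref{cor3} then deliver $\eta d^{s_1}\le v$ in $\Om$.

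The hard part is the lack of scale invariance: the $(-\De)_q^{s_2}$ term does not fit the scaling that balances the $p$-principal part with the singular source, so it must be treated as a lower-order remainder. The sharp $L^\infty$ control supplied by Lemma \ref{lemA2} is precisely what makes this absorption feasible, and the hypotheses $\ga\ne ps_1-q's_2(p-1+\de)$ (for the upper bound) and $s_1\ne q's_2$ (for the lower bound) are imposed exactly to stay away from the borderline exponent $\al=q's_2$ excluded from Lemma \ref{lemA2}; the harmless perturbation $\ga\leadsto\ga_1$ in case (iii) sidesteps this borderline at the cost of a slightly smaller exponent $\mu_1<\mu$.
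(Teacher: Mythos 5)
Your upper-bound argument reproduces the paper's: use the barrier $\Ga\ov w_\rho$ with $\al=\mu$, $\ka=\e$, invoke Theorem \ref{sup} for the $p$-part and Lemma \ref{lemA2} for the $q$-part, absorb the bounded $q$-contribution using $(d+\e^{1/\mu})^{-(\ga+\mu\de)}\to\infty$, match boundary data via the uniform-in-$\e$ $L^\infty$ bound on $\Om\setminus\Om_\varrho$ from Propositions \ref{localbdd}/\ref{prop1}, apply the weak comparison principle and send $\e\to 0^+$. The restriction $\sg\in(s_2,s_1)\setminus\{q's_2\}$ you impose in case (i) is harmless since $d^{\sg}\leq(\mathrm{diam}\,\Om)^{\sg-\sg'}d^{\sg'}$ for $\sg>\sg'$ on a bounded domain, and case (iii) uses exactly the perturbation $\ga\rightsquigarrow\ga_1$ that the paper employs. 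This part of your proof is correct and essentially identical to the paper's.

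Your lower bound takes a genuinely different and in fact shorter route. The paper proceeds indirectly: it first shows that the solution $w_\vartheta$ of the auxiliary constant-right-hand-side problem \eqref{probsub} is a subsolution of $(\mc S^1_{\ga,\de})$, so that $w_\vartheta\le v_1\le v_\e$, and then invokes the Hopf lemma (Proposition \ref{hopf}) — whose own proof needs the auxiliary subsolution $d^{s_1}+\eta\chi_B$ precisely because $(-\De)_p^{s_1}d^{s_1}+(-\De)_q^{s_2}d^{s_1}$ is merely bounded but possibly positive, so $d^{s_1}$ alone is not a subsolution of the \emph{homogeneous} equation. You instead observe that for the \emph{singular} problem the right-hand side $K_{\ga,\e}(\cdot+\e)^{-\de}$ is bounded away from zero uniformly in $\e$ on $\Om_\varrho$, so that $\eta^{p-1}\tilde g+\eta^{q-1}g$ can be made smaller than it by shrinking $\eta$, making $\eta d^{s_1}$ directly a subsolution of $(\mc S^\e_{\ga,\de})$; the boundary data are ordered by Proposition \ref{prop1}, and comparison plus $\e\to 0^+$ finish. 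This is a valid simplification that bypasses the $\chi_B$ trick entirely. One phrasing in your write-up is misleading, however: the lower bound $C_3(d+\e^{1/\mu})^{-\ga}(\eta(\mathrm{diam}\,\Om)^{s_1}+\e)^{-\de}$ on the right-hand side does \emph{not} diverge as $d,\e\to 0^+$ when $\ga=0$ (it tends to the finite constant $C_3(\eta(\mathrm{diam}\,\Om)^{s_1})^{-\de}$); what your argument really needs — and has — is a positive lower bound, uniform in $\e\in(0,\e_*]$, against which $\eta^{q-1}(\|\tilde g\|_\infty+\|g\|_\infty)$ is beaten by taking $\eta$ small. With that reading the lower-bound argument is sound.
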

\begin{proof}
 For $\vartheta>0$ sufficiently small, we see that the solution $w_\vartheta\in W^{s_1,p}_0(\Om)$ to problem \eqref{probsub} (given by Theorem \ref{sub}) is a subsolution to $(\mc S^{1}_{\ga,\de})$. Therefore, by the weak comparison principle (note that $v_1\in W^{s_1,p}_0(\Om)$), we get $w_\vartheta \leq v_1$ in $\Om$. Consequently, by Hopf's maximum principle (see proposition \ref{hopf}) and the fact that $\{v_\e\}_\e$ is decreasing in $\e$ (proposition \ref{prop1}), we obtain
 $$\eta d^{s_1} \leq v_1 \leq v_\e \quad\mbox{in }\Om,$$ 
 for some constant $\eta>0$ independent of $\e$. Passing to the limit $\e\to 0$, we get the required lower bound on $v$. \\
 To prove the upper bound we distinguish the following cases.\\
 \textit{Case (i)}: If $\ga-s_1(1-\de)>0$ with $\ga\neq ps_1-q's_2(p-1+\de)$.\\
 Fix $\Ga>1$ (to be specified later), then using Theorem \ref{sup} and Lemma \ref{lemA2}, for $\al=\frac{s_1p-\ga}{p-1+\de}\in(0,s_1)$ and $\ka=\e$ there, we have 
 \begin{align*}
 	(-\De)_p^{s_1}(\Ga\ov w_\rho) \geq C_5 \Ga^{p-1}(d+\e^{1/\al})^{-(ps_1-\al(p-1))} \quad\mbox{and } 
 	(-\De)_q^{s_2} (\Ga\ov w_\rho) = \Ga^{q-1} h, 
 \end{align*}
  weakly in $\Om_\varrho$, for some $\varrho>0$ sufficiently small and $h\in L^\infty(\Om_{\varrho})$ (independent of $\e$). Therefore, choosing $\varrho,\e_*>0$ sufficiently small (depending on $h$ also), and  using \eqref{keraux}, we deduce that
  \begin{align}\label{eq25}
  (-\De)_p^{s_1}(\Ga\ov w_\rho)+  (-\De)_q^{s_2} (\Ga\ov w_\rho) &\geq C_5 \Ga^{p-1}(d+\e^{1/\al})^{-\ga-\al\de}- \Ga^{q-1} 
  \| h \|_{L^\infty(\Om_{\varrho})} \nonumber\\
  &\geq  \Ga^{p-1} \frac{C_5}{2} (d+\e^{1/\al})^{-\ga-\al\de} \nonumber\\
  &\geq \Ga^{p-1} C_6  C_4^{-1} K_{\ga,\e}(x) (d+\e^{1/\al})^{-\al\de},
  \end{align}
  weakly in $\Om_\varrho$, for all $\e\in (0,\e_*)$. Furthermore, in $\Om_{\varrho}$, we have
  \begin{align*}
  	\Ga^{p-1} C_6  C_4^{-1} (d+\e^{1/\al})^{-\al\de} \geq (\Ga (d+\e^{1/\al})^{\al})^{-\de} \geq (\Ga (d+\e^{1/\al})^{\al}+\e)^{-\de} = (\Ga \ov w_\rho+\e)^{-\de},
  \end{align*}
  provided $\Ga^{p-1+\de} C_6  C_4^{-1}\ge 1$. Thus, from \eqref{eq25}, we obtain 
    \begin{align}\label{eq26}
     (-\De)_p^{s_1}(\Ga\ov w_\rho)+ (-\De)_q^{s_2} (\Ga\ov w_\rho) &\geq K_{\ga,\e}(x)(\Ga \ov w_\rho+\e)^{-\de} \quad\mbox{weakly in }\Om_\varrho.
    \end{align}
  On account of \eqref{keraux} and \eqref{eq20}, we note that 
    \begin{align}\label{eqS1}
   	 K_{\ga,\e}(x)(v_\e+\e)^{-\de} \leq  C_4 d(x)^{-\ga} v_\e^{-\de} \leq  C_4 \varrho^{-\ga} C_{\varrho}^{-\de} \quad\mbox{in } \Om\setminus\Om_\varrho,
    \end{align}
  that is, the right hand side quantity in \eqref{probsingeps} is independent of $\e$ in $\Om\setminus\Om_\varrho$. Next, we claim that $v_\e$ is bounded, independent of $\e$, in $\Om\setminus\Om_{\varrho}$. Let $\{ B_{\varrho/4}(x_i) \}_{i=1,\dots,m}$ be a finite covering of $\ov{\Om\setminus\Om_{\varrho}}$ such that
    \[  \ov{\Om\setminus\Om_{\varrho}} \subset \cup_{i=1}^{m} B_{\varrho/4}(x_i)\subset \Om\setminus\Om_{\varrho/2}\Subset\Om.\] 
  Therefore, from Propositions \ref{localbdd} and Corollary \ref{corlocalbdd}, we deduce that 
  \begin{align}\label{eqS2}
  	\| v_\e \|_{L^\infty(B_{\varrho/4}(x_i))} \leq C \Big( \Xint-_{B_{\varrho/2}(x_i)}v_\e^p dx \Big)^{1/p}+ \sum_{(l,s)} T_{p-1}(v_\e;x_i,\frac{\varrho}{4})^{(l-1)/(p-1)}+ C, 
  \end{align}
 where $C=\big(C(N,p,q,s_1)(1+\|K_{\ga,\e}(x)(v_\e+\e)^{-\de}\|_{L^\infty(B_{\varrho/2})})\big)^{p^*_{s_1}/p^2}$, with $C(N,p,q,s_1)>0$ as a constant. On account of \eqref{eqS1}, we see that $C>0$ is independent of $\e$. For the other terms, we have from proposition \ref{prop1} that $\{v_\e^\theta\}_\e$ is bounded in $W^{s_1,p}_0(\Om)$, for some $\theta\geq 1$. Therefore,
  \begin{align*}
  	\Xint-_{B_{\varrho/2}(x_i)}v_\e^p dx \leq C (1+\| v_\e^\theta \|_{L^p(\Om)}) \leq C (1+ \| v_\e^\theta \|_{W^{s_1,p}_0(\Om)})\leq C,
  \end{align*}
 and noting that $v_\e=0$ in $\mb R^N\setminus\Om$, for $(l,s)\in\{(p,s_1),(q,s_2)\}$,
  \begin{align*}
  	T_{l-1}(v_\e;x_i,\varrho/4)^{l-1}\leq C \varrho^{ls}\int_{\Om\setminus B_{\varrho/4}(x_i)}\frac{|v_\e(x)|^{l-1}}{\varrho^{N+sl}}dx\leq C \varrho^{-N} \| v_\e \|^{p-1}_{L^{p-1}(\Om)}\leq C.
  \end{align*}
 Hence, \eqref{eqS2} implies that the sequence
 $\{ \|v_\e\|_{L^\infty(\Om\setminus\Om_{\varrho})} \}_\e$ is uniformly bounded with respect to $\e$, that is, $\|v_\e\|_{L^\infty(\Om\setminus\Om_\varrho)}  \leq C'_{\varrho}$. Consequently, we can choose $\Ga>1$ large enough and independent of $\e$ such that 
   \[ v_\e \leq \|v_\e\|_{L^\infty(\Om\setminus\Om_\varrho)} \le C'_{\varrho} \leq \Ga (d+\e^{1/\al})^\al= \Ga \ov w_\rho \quad\mbox{in } \Om\setminus\Om_\varrho, \]
  this together with \eqref{eq26} and the weak comparison principle in $\Om_{\varrho}$ (note that $v_\e\in W^{s_1,p}_0(\Om)\cap L^\infty(\mb R^N)$ implies that $v_\e\in\widetilde{W}^{s_1,p}(\Om_{\varrho})\cap\widetilde{W}^{s_2,q}(\Om_{\varrho})$) yields 
  \[ v_\e \leq \Ga \ov w_\rho \quad\mbox{in }\Om, \ \ \mbox{for all } \e\in(0,1). \]
  Passing to the limit $\e\to 0$ in the above expression, we get the required upper bound for $v$, in this case. \\
\textit{Case (ii)}: If $\ga-s_1(1-\de)> 0$ with $\ga= ps_1-q's_2(p-1+\de)$.\\ 
 Similar to case (i) above,  we fix $\Ga>1$ (to be specified later) and choose $\ga_1\in (\ga,ps_1)$. Then, $\ga_1-s_1(1-\de)> 0$ and $\ga_1\neq ps_1-q's_2(p-1+\de)$. 
  Thus using Theorem \ref{sup} and Lemma \ref{lemA2}, for $\al_1=\frac{s_1p-\ga_1}{p-1+\de}\in(0,s_1)$ and $\ka=\e$ there, we have 
    \begin{align}\label{eq25'}
  	(-\De)_p^{s_1}(\Ga\ov w_\rho)+  (-\De)_q^{s_2} (\Ga\ov w_\rho)
  	&\geq  \Ga^{p-1} \frac{C_5}{2} (d+\e^{1/\al_1})^{-\ga-\al_1\de} \nonumber\\
  	&\geq \Ga^{p-1} C_6  C_4^{-1} K_{\ga,\e}(x) (d+\e^{1/\al_1})^{-\ga_1+\ga} (d+\e^{1/\al_1})^{-\al_1\de},
  \end{align}
  weakly in $\Om_\varrho$, for all $\e\in (0,\e_*)$. 
   Furthermore, in $\Om_{\varrho}$, we have
  \begin{align*}
  	\Ga^{p-1} C_6  C_4^{-1} (d+\e^{1/\al_1})^{-\ga_1+\ga} (d+\e^{1/\al})^{-\al_1\de} 
  	&\geq \Ga^{p-1} C_6  C_4^{-1} (\varrho+1)^{-\ga_1+\ga} (d+\e^{1/\al_1})^{-\al_1\de} \\
  	&\geq (\Ga (d+\e^{1/\al_1})^{\al_1})^{-\de} 
  	\geq (\Ga \ov w_\rho+\e)^{-\de},
  \end{align*}
  provided $\Ga^{p-1+\de} C_6  C_4^{-1}(\varrho+1)^{-\ga_1+\ga} \ge 1$. Then proceeding similar to case (i) above, we obtain 
   \[ v_\e \leq \Ga \ov w_\rho \quad\mbox{in }\Om, \ \ \mbox{for all } \e\in(0,1),\]
  where $v_\e\in W^{s_1,p}_0(\Om)$ is the solution to problem \eqref{probsingeps}. Passing to the limit $\e\to 0$ in the above expression, we get 
  \begin{align*}
    v(x) \leq \Ga d(x)^{\frac{ps_1-\ga_1}{p-1+\de}} \quad\mbox{in }\Om,
  \end{align*}
 for all $\ga_1 \in (\ga,ps_1)$.\\
  \textit{Case (iii)}:  If $\ga-s_1(1-\de)\leq 0$.\\
 We fix $\tilde{\ga}>0$ and $\varsigma>0$ such that $\tilde{\ga}=(1-\de)s_1+\varsigma(p-1+\de)>s_1(1-\de)\geq \ga$. We further impose the conditions $\varsigma<\frac{p-1+p\de s_1}{p(p-1+\de)}$ (this implies $\tl\ga<1+s_1-\frac{1}{p}$) and $\varsigma\neq s_1-q's_2$ (this implies $\tl\ga\neq ps_1-q's_2(p-1+\de)$). Thanks to \eqref{kermain}, we can choose  $m\geq 1$ such that $K_{\ga}\leq m K_{\tilde{\ga}}$ in $\Om$. Let $\tilde{v}_{\tilde{\ga}}\in W^{s_1,p}_{\rm loc}(\Om)$ be the minimal solution to the problem:
  \begin{equation*}
 \left\{\begin{array}{rllll}
 (-\Delta)^{s_1}_{p}u+(-\Delta)^{s_2}_{q}u & = m K_{\tilde{\ga}}(x)u^{-\de}, \; \ u>0 \; \text{ in } \Om, \\
  \quad u&=0 \quad \text{ in } \mathbb{R}^N\setminus \Om.
 \end{array}
 \right.
 \end{equation*}
 By the choice of $m$, we see that $\tilde{v}$ is a super solution to problem \eqref{probsing}. Since $\tl\ga<1+s_1-1/p$, by applying the weak comparison principle as in proposition \ref{prop4}, and case (i) above, we get 
  \begin{align*}
  v(x) \leq \tilde{v}_{\tilde{\ga}}(x) \leq \Ga d(x)^\frac{s_1p-\tilde{\ga}}{p-1+\de}= \Ga d(x)^{s_1-\varsigma} \quad\mbox{in }\Om.
 \end{align*}
 Therefore, for any $\sg\in (0,s_1)$, we can choose $\varsigma\in (0,s_1-\sg)$ satisfying all the assumptions above. This completes the proof of the theorem.
  \QED
  \end{proof}
\textbf{Proof of Theorem \ref{bdryregsing}}: On account of remark \ref{remequivsoln} and theorem \ref{impintreg}, we have $v\in C^{0,\sg}_{\rm loc}(\Om)$, for all $\sg\in (0,s_1)$. This coupled with the boundary behavior of $v$, as in theorem \ref{uppdist}, proves the theorem. \QED
\textbf{Proof of Corollary \ref{corcriticalsing}}:  We first see that $u\in L^\infty(\Om)$. Indeed, for $m>0$, $t\ge 0$ and $\kappa\ge 0$, we define $g_\ka(t)=(t-1)_+ (\llcorner (t-1)_+ \lrcorner_m)^\ka$, where $\llcorner t \lrcorner_m:=\min\{t,m\}$. Set $G_\ka(t)=\int_{0}^{t}g^\prime_\ka(\tau)^{1/p}d\tau$. Then, proceeding similar to the proof of theorem \ref{mainbound}, we can prove that 
 \begin{align*}
 	\left(\int_{\Om} (u-1)^{p^*_{s_1}}_+ ([(u-1)_+]_m)^{\ka \frac{p^*_{s_1}}{p}}\right)^\frac{p}{p^*_{s_1}} &\leq C\| G_\ka(u)\|^p_{W^{s_1,p}_0(\Om)} \\
 	&\leq C \int_{\Om\cap\{u\geq 1\}} \big(1+u^{p^*_{s_1}-1}\big)(u-1)(\llcorner (u-1)_+ \lrcorner_m)^\ka.
 \end{align*}
 Thus, we can conclude that $(u-1)_+\in L^\infty(\Om)$, and consequently $u\in L^\infty(\Om)$. Therefore,
\begin{align*}
	(-\Delta)^{s_1}_{p}u+  (-\Delta)^{s_2}_{q}u = \la u^{-\de}+ b(x,u) \leq C_b \big(\la + \|u\|^{\de}_{L^\infty(\Om)} +\|u\|^{p^*_{s_1}-1+\de}_{L^\infty(\Om)}\big) u^{-\de}:= \la_* u^{-\de}.
\end{align*}
Let $v\in W^{s_1,p}_0(\Om)$ be the minimal solution of $(\mc S_{0,\de})$ with $K_0(x):=\la_*$ in there, then by applying theorem \ref{uppdist} for the case $\ga=0$ and the weak comparison theorem,  we obtain
\begin{align*}
	u(x)\leq v(x)\leq \Ga d(x)^{s_1-\sg} \quad\mbox{in }\Om,
\end{align*}
for all $\sg\in (0,s_1)$ and for some $\Ga>0$, depending on $\la$, $p,\de,r$ and $\|u\|_{L^\infty(\Om)}$ only. By combining the result of Theorem \ref{impintreg} with the above boundary behavior of $u$, we can prove that  $u\in C^{0,s_1-\sg}(\ov\Om)$, for all $\sg\in (0,s_1)$ (the proof is similar to \cite[Theorem 1.1]{iann}).\QED

Now, we prove the following strong comparison principle for singular problems, essentially due to Jarohs \cite[Theorem 1.1]{jarohs} for non-singular problems.\\
\textbf{Proof of Theorem \ref{strongcomp}}: 
Let $D\subset\mb R^N$ such that $0<|D|<|\Om|$ and $l:=\mathrm{inf}_D(v-w)>0$ (if we can not find such a $D$, then we have $v\equiv w$). Without loss of generality we may assume $l\leq 1$. Fix $K\Subset \Om\setminus D$ and $f\in C^2_c(\Om\setminus D)$ such that $f\equiv 1$ in $K$ and $0\le f \leq 1$. Let $a_0,b>0$ be given by \cite[Lemma 3.6]{jarohs} and $a\in (0,a_0]$ with $a<\eta \mathrm{dist}(\mathrm{supp}(f),\Om)^{s_1}$. Define $u_a= v-af-l\chi_D$, then $u_a\in W^{s_1,p}_0(\mathrm{supp}f)$. Therefore, using \cite[Lemma 3.6]{jarohs} for $(-\De)_p^{s_1}$ and $(-\De)_q^{s_2}$ (thanks to the assumption on $\al$), we obtain in the weak sense in supp$f$,
 \begin{align}\label{eq28}
 	(-\Delta)^{s_1}_{p}u_a+  (-\Delta)^{s_2}_{q}u_a \ge (-\Delta)^{s_1}_{p}v+  (-\Delta)^{s_2}_{q}v +b &\geq v^{-\de}+g+b  \nonumber\\
 	&= u_a^{-\de}+g+b+(v^{-\de}-u_a^{-\de}) \nonumber\\
 	&\ge  u_a^{-\de}+g+b+(v^{-\de}-(v-a)^{-\de}).
 \end{align}
  We note that the terms involving power ${-\de}$ are well defined by the relation $v\geq \eta d^{s_1}$ and the choice of $a$. Furthermore, since $\de\in(0,1)$, we have 
 \begin{align*}
 	(v-a)^{-\de}-v^{-\de}\leq a^{\de}v^{\de}(v-a)^{\de} \leq a^\de v^{2\de}\leq a^\de \|v\|^{2\de}_{L^\infty(\Om)}.
 \end{align*}
 Therefore, we can choose $a>0$ sufficiently small such that $b-(v^{-\de}+(v-a)^{-\de})>0$ in supp$f$. Thus, from \eqref{eq28}, we get
 \begin{align*}
  (-\Delta)^{s_1}_{p}u_a+  (-\Delta)^{s_2}_{q}u_a \geq u_a^{-\de}+g \quad\mbox{in }\mathrm{supp}f,
 \end{align*}	
 and $u_a\leq w$ in $\mb R^N\setminus\mathrm{supp}f$. Consequently, using the weak comparison principle, we get $u_a \geq w$ in supp$f$. In particular, 
 \begin{align*}
  v \geq w + a \quad\mbox{in }K.
 \end{align*}
 Since $K$ and $f$ were chosen arbitrarily, we have 
 \[ \mathrm{inf}_K (v-w)>0 \quad\mbox{for all }K\Subset\Om\setminus D. \]
 Now, repeating the process for $\tilde{D}\Subset \Om\setminus D$ with $\tilde{l}:=\mathrm{inf}_{\tilde{D}} (v-w)>0$, we complete the proof of the theorem for the case $v\in C^{0,\al}_{\mathrm{loc}}(\Om)$. The other case can be dealt in a similar manner by following the approach of \cite[Theorem 1.1]{jarohs}.\QED


\end{document}